\theoremstyle{plain}
\newtheorem{theorem}{Theorem}[section]
\newtheorem{lemma}[theorem]{Lemma}
\newtheorem{corollary}[theorem]{Corollary}
\newtheorem{proposition}[theorem]{Proposition}
\newtheorem{question}[theorem]{Question}
\theoremstyle{definition}
\newtheorem{example}[theorem]{Example}
\theoremstyle{remark}
\newtheorem{remark}[theorem]{Remark}
\numberwithin{equation}{section}
\DeclareMathOperator{\dimh}{dim_H}
\DeclareMathOperator{\dist}{dist}
\DeclareMathOperator{\diam}{diam}
\DeclareMathOperator{\hdist}{d_H}
\DeclareMathOperator{\lip}{Lip}
\def\R{\mathbb R}
\def\Z{\mathbb Z}
\def\i{\bm i}
\def\j{\bm j}
\def\h{\mathcal H}
\title[From Lipschitz Embedding to Equivalence between Self-similar Sets]{From Lipschitz Embedding to Lipschitz Equivalence between Dust-like Self-similar Sets}
\author{Huo-Jun Ruan}
\address{School of Mathematical Sciences, Zhejiang University, Hangzhou 310058, China}
\email{ruanhj@zju.edu.cn}
\author{Jian-Ci Xiao$^{\ast}$}
\address{School of Mathematics, Nanjing University of Aeronautics and Astronautics, Nanjing 211106, China}
\email{jcxiao@nuaa.edu.cn}
\thanks{$^\ast$Corresponding author.}
\keywords{Lipschitz equivalence, self-similar sets, Lipschitz embedding, algebraic dependence.}
\subjclass[2020]{Primary 28A80; Secondary 28A78, 51F30}
\begin{document}

\begin{abstract}
    Let $K,F\subset\R^d$ be two dust-like self-similar sets sharing the same Hausdorff dimension. We consider when the mere existence of a Lipschitz embedding from $K$ to $F$ already implies their Lipschitz equivalence. Our main result is threefold: (1) if the Lipschitz image of $K$ intersects $F$ in a set of positive Hausdorff measure, then $K$ admits a Lipschitz surjection onto $F$; (2) if $F$ is in addition homogeneous, then the generating iterated function systems of $K, F$ should have algebraically dependent ratios and consequently, $K$ and $F$ are Lipschitz equivalent; (3) the Lipschitz equivalence can fail without the homogeneity assumption. This answers two questions in Balka and Keleti [Adv. Math. {\bf 446} (2024), 109669].
\end{abstract}
\maketitle
%\tableofcontents

%--------------------------------
\section{Introduction}

Self-similar sets, which arise as attractors of self-similar iterated function systems (IFS), exhibit intricate geometric and topological properties that make them fundamental objects in the study of fractals. A key question in this context is to complete the Lipschitz classification of such sets, as Lipschitz equivalence preserves a number of crucial geometric and measure-theoretic properties, including fractal dimensions, regularity and metric scaling structures. Recall that $A,B\subset\R^d$ are said to be \emph{Lipschitz equivalent} if there exists a bijection $f: A \to B$ such that 
\[
    c^{-1}|x-y| \leq |f(x)-f(y)| \leq c|x-y|, \quad \forall x,y\in A
\]
for some constant $c>0$, that is, both of $f$ and its inverse $f^{-1}$ are Lipschitz maps.

Unlike topological equivalence, the Lipschitz equivalence requires a bounded geometric distortion, making the classification far more rigid and heavily reliant on the precise local patterns of self-similar sets. A natural starting point is to examine dust-like self-similar sets, where the absence of overlaps yields a clear geometric structure. Recall that a self-similar set $K$ generated by an iterated function system (IFS) $\Phi = \{\varphi_i\}_{i=1}^n$, where each $\varphi_i: \R^d \to \R^d$ is a contractive similitude, is called \emph{dust-like} when $n\geq 2$ and the images $\{\varphi_i(K)\}_{i=1}^n$ are pairwise disjoint. In this case, we also say that $\Phi$ satisfies the \emph{strong separation condition (SSC)}. For dust-like self-similar sets, it is proved that the Lipschitz equivalence reduces entirely to the algebraic compatibility of scaling ratios in their associated IFSs (see e.g.~\cite{RRX06}), thus simplifying many technical challenges. The situation becomes particularly tractable (see Lemma~\ref{lem:logdepimpliesequi}) when dealing with \emph{homogeneous} self-similar sets, where all similitudes in the associated IFS share a common contraction ratio. 

One of the foundational studies toward the algebraic direction was conducted by Falconer and Marsh~\cite{FM92}. They demonstrated that the Lipschitz equivalence between two dust-like self-similar sets requires specific multiplicative relationships among the corresponding scaling ratios, thereby revealing a deep connection to the algebraic properties of the IFSs. This line was further developed in~\cite{RRW12} where the first author, in collaboration with Rao and Wang, introduced a necessary condition for the Lipschitz equivalence called \emph{matchable condition}, which enables a complete  classification for two important cases: when the contraction vectors have full ranks, or when the IFSs consist of exactly two maps. Later, Rao and Zhang~\cite{RZ15} linked the classification with a higher-dimensional Frobenius problem, yielding criteria for equivalence when the scaling ratios are \emph{coplanar} in a certain sense. More recently, Xi and Xiong~\cite{XX21} gave a checkable characterization for systems with \emph{commensurable} ratios. For related results, see \cite{CP88, DS97, LuoLau13, Xi10} and the references therein.

Another direction investigates Lipschitz embeddings between dust-like self-similar sets and how such embeddability relates to the Lipschitz equivalence. Since Lipschitz maps cannot increase Hausdorff dimension, if $K,F$ are two dust-like self-similar sets with $\dimh K<\dimh F$ (where $\dimh$ denotes the Hausdorff dimension), then no Lipschitz maps can send $K$ onto $F$. On the other hand, if $\dimh K>\dimh F$, a Lipschitz embedding of $K$ onto $F$ always exists (see~\cite{BK24}). The critical case occurs when $\dimh K=\dimh F$. For this scenario, Deng, Wen, Xiong and Xi~\cite{DWXX11} proved that $K$ admits a bi-Lipschitz embedding into $F$ if and only if $K$ and $F$ are Lipschitz equivalent.

In 2024, Balka and Keleti~\cite{BK24} made significant progress. They discovered that in certain circumstances, the mere existence of a Lipschitz surjection from one dust-like self-similar set to another already implies the Lipschitz equivalence. Their investigation focused particularly on the case when $K$ (as the domain) is homogeneous. 

\begin{theorem}[\cite{BK24}]\label{thm:bk24}   
    Let $K,F\subset\R^d$ be two dust-like self-similar sets with the same Hausdorff dimension. If $K$ is homogeneous and there is a Lipschitz surjection from $K$ to $F$, then $K$ and $F$ are Lipschitz equivalent.
\end{theorem}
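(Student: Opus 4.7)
My plan is to produce a bi-Lipschitz embedding of $K$ (or of some cylinder of $K$) into $F$, and then invoke the Deng--Wen--Xiong--Xi theorem recalled earlier, which promotes any such embedding to a Lipschitz equivalence as soon as the two dust-like sets share their Hausdorff dimension. For setup, write $K$ with generating IFS $\{\varphi_i\}_{i=1}^n$ of common ratio $r$, so $nr^s=1$ with $s=\dimh K=\dimh F$, and $F$ with generating IFS $\{\psi_j\}_{j=1}^m$ of ratios $s_j$ satisfying $\sum_j s_j^s=1$; both $K$ and $F$ are Ahlfors $s$-regular with positive finite $\h^s$-measure. Let $L=\lip(f)$.

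The first step is a double pigeonhole over cylinders. For each $k\geq 1$, the level-$k$ cylinders $\{K_\eta=\varphi_{\eta_1}\circ\cdots\circ\varphi_{\eta_k}(K)\}_{|\eta|=k}$ form a disjoint decomposition of $K$ into $n^k$ scaled copies of diameter $r^k\diam K$. Surjectivity of $f$ and countable subadditivity give $\sum_{|\eta|=k}\h^s(f(K_\eta))\geq\h^s(F)$, so some $\eta_k$ satisfies $\h^s(f(K_{\eta_k}))\geq n^{-k}\h^s(F)=r^{ks}\h^s(F)$. I would then choose a level $l=l(k)$ so that every cylinder $F_\omega$ with $|\omega|=l$ has $\diam F_\omega\asymp r^k$, possible up to a bounded multiplicative constant since the $s_j$ form a finite set. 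Because $\diam f(K_{\eta_k})\leq L r^k\diam K$ and the $F_\omega$ are pairwise separated by SSC, only $O(1)$ of them meet $f(K_{\eta_k})$, so a second pigeonhole picks $\omega_k$ with $\h^s(f(K_{\eta_k})\cap F_{\omega_k})\gtrsim r^{ks}$; equivalently, $f(K_{\eta_k})\cap F_{\omega_k}$ has $\h^s$-density uniformly bounded below in $F_{\omega_k}$.

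Next I would rescale and pass to a limit. Let $\tau_k:K\to K_{\eta_k}$ and $\pi_k:F\to F_{\omega_k}$ be the natural similitudes of ratios $r^k$ and $s_{\omega_k}\asymp r^k$. Setting $A_k:=\tau_k^{-1}(K_{\eta_k}\cap f^{-1}(F_{\omega_k}))$ and $g_k:=\pi_k^{-1}\circ f\circ\tau_k:A_k\to F$, each $g_k$ is Lipschitz with constant $Lr^k/s_{\omega_k}$, uniformly bounded in $k$, and $\h^s(g_k(A_k))$ is uniformly bounded below. By Arzel\`a--Ascoli together with Blaschke selection, a subsequence yields $A_k\to A_\infty\subseteq K$ in the Hausdorff metric and $g_k\to g_\infty:A_\infty\to F$ uniformly, with $g_\infty$ Lipschitz and $\h^s(g_\infty(A_\infty))>0$. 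Iterating the same cylinder-extraction procedure on $g_\infty$, using crucially that every cylinder of the homogeneous set $K$ is an exact scaled copy of $K$, and combining with a nested-cylinder diagonal argument, I would produce a cylinder $K'\subseteq K$ on which the resulting composition is quantitatively injective with Lipschitz inverse. This delivers a bi-Lipschitz embedding of $K'\cong K$ into $F$, and Deng--Wen--Xiong--Xi then yields the Lipschitz equivalence.

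The main obstacle will be the bi-Lipschitz upgrade in the last step: a Lipschitz surjection with the right mass-distortion profile need not be injective, and ruling out that $g_\infty$ identifies two distant cylinders of $K$ requires exploiting SSC in $F$ quantitatively together with the homogeneity of $K$, showing that any such collapse would force a density defect at some scale, contradicting the uniform lower bound established in the pigeonhole step.
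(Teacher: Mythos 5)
Your reduction to the Deng--Wen--Xiong--Xi theorem via a bi-Lipschitz embedding of a cylinder is a legitimate target, and your first two steps (the double pigeonhole and the rescaling limit) are sound and closely parallel what the paper does in Section 3 and Lemma 2.2. The gap is the third step, and it is not a technical loose end but the entire content of the theorem. You assert that a ``nested-cylinder diagonal argument'' will produce a cylinder on which the limit map is quantitatively injective with Lipschitz inverse, and that any collapse of two distant cylinders ``would force a density defect at some scale, contradicting the uniform lower bound.'' Neither claim is substantiated, and the proposed mechanism does not work: your pigeonhole gives a density bounded below by a fixed constant, not tending to $1$, and such a bound is perfectly compatible with the map being $2$-to-$1$ on sets of positive measure, because nothing in your setup bounds the mass of the domain piece from \emph{above} relative to its image. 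Even if you upgrade to density $\to 1$ (as the paper does via the Lebesgue density theorem in Lemma 3.5) and establish measure-theoretic almost-injectivity, that still does not yield a lower Lipschitz bound $|g(x)-g(y)|\gtrsim|x-y|$, which is a statement about every pair of points at every scale, not about almost every point. There is also a smaller issue in step two: the rescaled preimages converge to a finite union of rotated scaled copies of $K$, not to a subset of $K$, so the iteration has to be run on the class $\mathcal{K}$ of such unions, as the paper does.

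The paper's recovery of this theorem (Corollary 4.6) circumvents injectivity entirely. It first passes to a surjection defined on a measure-minimal domain $\widetilde{K}\in\mathcal{K}$ (Corollary 4.3); minimality forces the exact identity $\h^s(\widetilde{f}^{-1}(F_{\bm{j}}))=\widetilde{c}\,\h^s(F_{\bm{j}})$ for every cell (Proposition 4.5). Homogeneity of $K$ then confines the possible values of $\h^s(\widetilde{f}^{-1}(F_{j^{k_n}}))$ to a fixed finite set times $r^{q_{1,n}s}$, and a pigeonhole over two indices $n\neq\tilde{n}$ yields $\lambda_j^{(k_n-k_{\tilde{n}})s}=r^{(q_{1,n}-q_{1,\tilde{n}})s}$, hence $\log\lambda_j/\log r\in\mathbb{Q}$; Lemma 2.4 then gives the equivalence. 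To salvage your route you would need either to carry out this minimality and measure-preservation analysis anyway (at which point the bi-Lipschitz embedding is superfluous) or to supply a genuinely new argument for the lower Lipschitz bound.
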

 
This result naturally raises the question of whether either the surjectivity condition or the homogeneity assumption can be weakened. More precisely, Balka and Keleti proposed the following two questions.

\begin{question}[{\cite[Section 8]{BK24}}]\label{que:1}
    Let $K,F\subset\R^d$ be two dust-like self-similar sets with the same Hausdorff dimension $s$. Denote by $\h^s$ the $s$-dimensional Hausdorff measure.
    \begin{enumerate}
        \item If $K$ is homogeneous and there is a Lipschitz map $f: K\to F$ with $\h^s(f(K))>0$, must $K$ and $F$ be Lipschitz equivalent?
        \item Under no homogeneity assumptions, if $K$ can be sent onto $F$ by a Lipschitz map, must $K$ and $F$ be Lipschitz equivalent? 
    \end{enumerate} 
\end{question}

The main aim of this paper is to address these questions. Our first result is as follows.

\begin{theorem}\label{thm:main1}
    Let $K,F\subset\R^d$ be two dust-like self-similar sets with the same Hausdorff dimension $s$. If there is a Lipschitz map $f: K\to\R^d$ with $\h^s(f(K)\cap F)>0$, then $K$ admits a Lipschitz surjection onto $F$.
\end{theorem}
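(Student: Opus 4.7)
The plan is to construct the desired Lipschitz surjection $g:K\to F$ in three phases: density-based localization, scale-matching via the self-similar structure, and iterative completion to upgrade partial coverage to full surjectivity.

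For localization and rescaling, since $F$ is dust-like, the measure $\h^s|_F$ is Ahlfors $s$-regular, so the Lebesgue density theorem applies to the set $f(K)\cap F$. Choosing any density-one point $y_0\in f(K)\cap F$, for every prescribed $\delta>0$ one can find a cylinder $F_w:=\psi_w(F)$ of $F$ containing $y_0$ that satisfies
\[
    \h^s(F_w\setminus f(K))<\delta\,\h^s(F_w).
\]
Writing $c_0:=\lip(f)$, the preimage $A:=f^{-1}(F_w)\cap K$ then has $\h^s(A)\geq c_0^{-s}(1-\delta)\h^s(F_w)$. Decomposing $K$ into a finite antichain of cylinders whose contraction ratios lie in a bounded range near $\rho_w/c_0$ (so that each of their $f$-images has diameter comparable to that of $F_w$), a pigeonhole/density argument within the Ahlfors-regular measure $\h^s|_K$ produces a single cylinder $K_u:=\varphi_u(K)$ for which $f(K_u)$ contributes a substantial positive fraction of $\h^s(F_w)$. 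Rescaling via the similitudes yields the map $g_0:=\psi_w^{-1}\circ f\circ\varphi_u:K\to\R^d$, which has uniformly bounded Lipschitz constant (since $r_u/\rho_w$ is bounded by construction) and covers a positive fraction of $F$ in $\h^s$-measure.

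To convert this partial coverage into a full Lipschitz surjection, I would iterate the localization-rescaling procedure: for each cylinder of $F$ not sufficiently covered by $g_0$, I would find an unused cylinder of $K$ (available thanks to the strong separation of the generating IFS) and apply the same scheme at a finer scale, gluing the resulting Lipschitz pieces together along the dust-like decomposition of $K$.

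The main obstacle is to maintain a uniform bound on the Lipschitz constants throughout the iterative completion while ensuring that every point of $F$ (not merely an $\h^s$-full-measure subset) is eventually covered. Both tasks rely on careful coordination of the scales and on the algebraic compatibility among the IFS ratios of $K$ and $F$ enforced by the equality of their Hausdorff dimensions. An alternative, potentially cleaner route would be to first extract from the positive-measure intersection a cylinder $K_u$ on which $f$ is bi-Lipschitz into $F$, via Egorov-type arguments for Lipschitz maps on Ahlfors regular sets, and then invoke the bi-Lipschitz-embedding-implies-Lipschitz-equivalence result of Deng--Wen--Xiong--Xi to conclude Lipschitz equivalence (and hence the surjection) directly.
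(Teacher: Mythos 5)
Your first phase (choosing a density point of $f(K)\cap F$, localizing to a cell $F_w$ with $\h^s(F_w\setminus f(K))<\delta\,\h^s(F_w)$, and rescaling) matches the paper's Lemma~\ref{lem:densitythm} and the opening of its argument, but the proof breaks down at exactly the point you flag as ``the main obstacle,'' and neither of your proposed ways around it works. The iterative completion cannot be carried out with a uniform Lipschitz bound: a map $g$ with $\lip(g)\le L$ cannot send a cylinder $K_u$ of diameter $r_u$ onto a portion of $F$ of diameter exceeding $Lr_u$, so the ``unused cylinders of $K$ at finer scales'' are simply too small to cover the coarse-scale parts of $F$ that $g_0$ misses, and gluing pieces whose domains are close in $K$ but whose images are far apart in $F$ destroys the Lipschitz constant. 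Moreover, your single-cylinder pigeonhole only yields a fraction of order $(1-\delta)/N_\delta$ of $\h^s(F_w)$, where $N_\delta$ is the number of comparable $K$-cylinders mapping into $F_w$; without a bound on $N_\delta$ that is uniform in the scale, this fraction can degenerate as $\delta\to0$. The paper's Proposition~\ref{prop:countingprop} is precisely the (nontrivial, Vitali-covering-plus-pigeonhole) statement that the cell can be chosen so that $N_\delta\le Q$ uniformly; the paper then keeps all $\le Q$ pieces rather than one, obtains rescaled maps whose images cover a fraction of $F$ tending to $1$, and passes to a limit via Kirszbraun extension and Arzel\`a--Ascoli (Lemma~\ref{lem:takinglimits}), so that the limit image is a \emph{closed} full-measure subset of $F$ and hence equals $F$ --- no iteration or gluing is ever needed. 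A Baire category argument then isolates a single scaled copy of $K$ whose image contains a cell of $F$, from which the surjection $K\to F$ follows.

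Your ``cleaner alternative'' is provably false. If one could always extract a cylinder $K_u$ on which $f$ is bi-Lipschitz into $F$, then, since $K_u$ is a scaled copy of $K$, the set $K$ would bi-Lipschitz embed into $F$, and by the Deng--Wen--Xiong--Xi theorem this would force $K$ and $F$ to be Lipschitz equivalent. But Example~\ref{exa:lastcountexa} exhibits dust-like $K,F$ of equal dimension admitting a Lipschitz surjection $K\to F$ (so certainly $\h^s(f(K)\cap F)>0$) that are \emph{not} Lipschitz equivalent. Hence no Egorov-type upgrade from a positive-measure image to bi-Lipschitzness on a sub-cylinder can exist; indeed, a large part of the paper (Sections 4--5) is devoted to showing how much weaker a conclusion --- almost injectivity and a measure-preserving property, obtained only after a delicate minimization over rescaled configurations --- one can actually extract from such a map.
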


The construction of such a surjection proceeds via three steps. We begin by determining a regular local region of $F$ where the map $f$ is not overly concentrated. Next, we apply a rescaling argument based on a standard density theorem in geometric measure theory. The desired surjection is then constructed by passing to an appropriate limit. Combining this result with Theorem~\ref{thm:bk24}, we immediately obtain an affirmative answer to Question~\ref{que:1}(1).

\begin{corollary}
    Let $K,F$ be as in Question~\ref{que:1}. If $K$ is homogeneous and there is a Lipschitz map $f: K\to F$ with $\h^s(f(K))>0$, then $K$ and $F$ are Lipschitz equivalent.
\end{corollary}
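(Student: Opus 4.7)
The plan is to chain Theorem~\ref{thm:main1} and Theorem~\ref{thm:bk24}. The hypothesis gives a Lipschitz map $f: K \to F$ with $\h^s(f(K)) > 0$. Since $f$ takes values in $F$, we have the trivial identity $f(K) \cap F = f(K)$, so $\h^s(f(K) \cap F) > 0$. Viewing $f$ as a Lipschitz map $K \to \R^d$, this is precisely the hypothesis of Theorem~\ref{thm:main1}, which therefore produces a Lipschitz surjection $g: K \to F$.

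With this genuine Lipschitz surjection available, the homogeneity of $K$ makes Theorem~\ref{thm:bk24} directly applicable: it asserts exactly that a Lipschitz surjection from a homogeneous dust-like self-similar set onto an equidimensional dust-like self-similar set forces Lipschitz equivalence. Combining the two steps yields the corollary.

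Since the corollary is a one-line synthesis, the real obstacle is not in this deduction but in the proof of Theorem~\ref{thm:main1}. The delicate point there is upgrading an image of positive $\h^s$-measure in $F$ to an honest surjection onto all of $F$: neither composition nor extension tricks immediately fill in the possibly vast portions of $F$ that $f(K)$ might miss. The three-step approach announced in the introduction (localize to a region where $f$ is non-degenerate, rescale using a density theorem, then take a limit) is the substantive ingredient, and the present corollary simply collects its payoff together with the earlier Balka--Keleti theorem.
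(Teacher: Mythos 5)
Your proposal is correct and matches the paper exactly: the corollary is obtained by noting $f(K)\cap F=f(K)$ has positive $\h^s$-measure, invoking Theorem~\ref{thm:main1} to produce a Lipschitz surjection from $K$ onto $F$, and then applying Theorem~\ref{thm:bk24} under the homogeneity of $K$. The paper likewise treats this as an immediate consequence, with all the substantive work residing in the proof of Theorem~\ref{thm:main1}.
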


Regarding the second question, our findings present a dichotomy of results. On the one hand, we prove that the Lipschitz equivalence holds when $F$ (as the target space of the embedding) is homogeneous, establishing a converse to Balka and Keleti's original theorem. On the other hand, we demonstrate that the homogeneity assumption on $K$ or $F$ is indeed essential: without it, there exists an explicit example (see Example~\ref{exa:lastcountexa}) that provides a negative answer to Question~\ref{que:1}(2). We summarize these results as follows.

\begin{theorem}\label{thm:main2}
    Let $K,F\subset\R^d$ be two dust-like self-similar sets with the same Hausdorff dimension $s$.
    \begin{enumerate}
        \item If $F$ is homogeneous and there is a Lipschitz map $f: K\to F$ with $\h^s(f(K))>0$, then $K$ and $F$ are Lipschitz equivalent.
        \item Assume that $K$ can be sent onto $F$ by a Lipschitz map. Without the homogeneity assumption, $K$ and $F$ need not to be Lipschitz equivalent.
    \end{enumerate}
\end{theorem}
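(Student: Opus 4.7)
The plan is to deduce part (1) from Theorem~\ref{thm:main1} combined with a rigidity argument exploiting the homogeneity of $F$, and to settle part (2) via an explicit counterexample.

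\textbf{Part (1).} Since $f(K)\subset F$, the hypothesis $\h^s(f(K))>0$ yields $\h^s(f(K)\cap F)>0$, so Theorem~\ref{thm:main1} produces a Lipschitz surjection $g\colon K\to F$. The crux is to deduce from the mere existence of $g$ that the contraction ratios $r_1,\ldots,r_n$ of the IFS generating $K$ and the common ratio $r$ of the IFS generating $F$ are logarithmically dependent, i.e., $\log r_i/\log r\in\Q$ for each $i$; Lemma~\ref{lem:logdepimpliesequi} will then deliver the Lipschitz equivalence. To establish this, let $\varphi_{\mathbf{u}}$ and $\psi_{\mathbf{w}}$ denote the canonical similitudes onto cylinders $K_{\mathbf{u}}$ of $K$ and $F_{\mathbf{w}}$ of $F$. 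Combining the strong separation of $K$ with a Besicovitch-type density argument, one locates, for every sufficiently deep cylinder $F_{\mathbf{w}}$, a cylinder $K_{\mathbf{u}}$ of comparable diameter whose $g$-image meets $F_{\mathbf{w}}$ in positive $\h^s$-measure. The rescaled maps $\psi_{\mathbf{w}}^{-1}\circ g\circ\varphi_{\mathbf{u}}\colon K\to\R^d$ are then Lipschitz with constants proportional to $r_{\mathbf{u}}/r^{|\mathbf{w}|}$ and with images hitting $F$ in positive measure. Extracting an Arzelà--Ascoli limit and applying Theorem~\ref{thm:main1} once more yields a family of rescaled Lipschitz surjections; a pigeonhole argument on the discrete set of admissible ratios $\{r_{\mathbf{u}}/r^{|\mathbf{w}|}\}$ then forces $\log r_i$ and $\log r$ to lie on a common rational line.

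\textbf{Part (2).} Here I would construct dust-like self-similar sets $K,F\subset\R$ of equal Hausdorff dimension, neither of them homogeneous, admitting a Lipschitz surjection $K\to F$ yet failing to be Lipschitz equivalent; this is the content of Example~\ref{exa:lastcountexa}. The design principle is to arrange each branch of $K$ to be a scaled copy either of $F$ itself or of a union of cylinders of $F$, thereby building a piecewise-similarity surjection $K\to F$ into the construction. Simultaneously, one selects the $K$- and $F$-ratios so that the multiplicative subgroups of $\R_+$ they generate are incompatible, thereby violating the Falconer--Marsh algebraic obstruction~\cite{FM92} (equivalently, the matchable condition of~\cite{RRW12}) and ruling out Lipschitz equivalence.

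\textbf{Main difficulty.} The decisive step is the rigidity argument in part (1). In the setting of~\cite{BK24} the homogeneity of the \emph{domain} $K$ allows the surjection to be iterated with itself, providing a powerful dynamical handle; here the homogeneity sits on the \emph{target} $F$ and offers no such self-reference. Transporting the self-reproducing structure of $F$ back through a single Lipschitz surjection into sharp multiplicative constraints on the $K$-ratios requires delicate multi-scale bookkeeping and forms the technical heart of the argument. By contrast, in part (2) the Lipschitz surjection is essentially built in by the piecewise-similarity design; the subtle point is merely to verify that the chosen ratios genuinely obstruct Lipschitz equivalence.
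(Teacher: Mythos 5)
Your high-level strategy for part (1) (reduce to $\log r_i/\log\lambda\in\Q$ and invoke Lemma~\ref{lem:logdepimpliesequi}, after upgrading $f$ to a surjection via Theorem~\ref{thm:main1}) is the right one, but the decisive step is missing and the step you do propose is circular. You want to pigeonhole ``on the discrete set of admissible ratios $\{r_{\mathbf{u}}/r^{|\mathbf{w}|}\}$,'' but that set is discrete (within a fixed compact interval of $(0,\infty)$) \emph{precisely when} the $r_i$ are logarithmically commensurable with $r$ --- which is the conclusion you are trying to prove. If some $\log r_i/\log r\notin\Q$, the ratios $r_{\mathbf u}/r^{|\mathbf w|}$ with $r_{\mathbf u}\approx r^{|\mathbf w|}$ form an infinite (typically dense) set and no pigeonhole applies. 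The paper gets around this by a much longer route: it replaces the surjection by an \emph{optimal} one $\widetilde f:\widetilde K\to F$ minimizing $\h^s(\widetilde K)$ over a compact class $\mathcal K$, proves that this minimality forces exact measure linearity $\h^s(\widetilde f^{-1}(B))=\widetilde c\,\h^s(B)$ (Proposition~\ref{prop:linearityofpreimage}) together with almost injectivity (Lemma~\ref{lem:disjointness}), and then --- using Szemer\'edi's theorem on arithmetic progressions --- shows that the image of every cylinder of $\widetilde K$ contains a whole cell of $F$ of comparable size (Proposition~\ref{prop:tildefisnottoosca}). Only after all this does the preimage of a well-chosen cell $F_{\j_n}$ localize inside a single cylinder $K_{i^{k_n}}$ up to suffixes drawn from a \emph{finite} set $\Gamma_n$, and the pigeonhole is applied to the $\Gamma_n$'s, not to the raw ratios. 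None of this machinery appears in your sketch, so the ``technical heart'' you correctly identify is in fact absent.

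For part (2), your design principle also points in the wrong direction. In the paper's counterexample (ratios $3^{-1},3^{-1},3^{-2},3^{-2}$ versus twenty copies of $3^{-3}$ and eight of $3^{-6}$) \emph{all} ratios are powers of $1/3$, so the multiplicative groups coincide and the Falconer--Marsh obstruction is satisfied; non-equivalence comes from the finer matchable-condition/higher-Frobenius obstruction of \cite{XX21}. Choosing ratios with ``incompatible multiplicative subgroups'' would be a different construction whose feasibility is unclear (and, in view of part (1) and Corollary~\ref{cor:bk24}, impossible whenever either set is homogeneous), and in any case the Lipschitz surjection is not ``built in'': constructing it requires the mass decomposition lemma of \cite{XX21} and a nontrivial hierarchical induction distributing the $\mu$-mass of cylinders of $K$ over cylinders of $F$. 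So both halves of the proposal state the correct targets but omit or misidentify the arguments that actually reach them.
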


Here is an outline of the key steps of the proof. For part (1), it suffices to build the algebraic dependence between the corresponding contraction ratios (see Lemma~\ref{lem:logdepimpliesequi}). Note that Theorem~\ref{thm:main1} guarantees a Lipschitz surjection from $K$ to $F$. From this, we construct an alternative map which is almost injective and enjoys a crucial measure-preserving property. It is worth pointing out that the measure-preserving property is enough to derive the algebraic dependence when $K$ is homogeneous, thus providing an alternative way to recover Theorem~\ref{thm:bk24}. When $F$ is homogeneous instead, the argument admits a certain ``reversal'' (in some sense) once we prove that the image of every cell of $K$ under the alternative map contains a cell of $F$ of comparable size. For part (2), we rely on a powerful mass decomposition lemma in~\cite{XX21}. Using these, we explicitly construct a Lipschitz surjection between two dust-like self-similar sets that fail to be Lipschitz equivalent, thereby completing the proof.

Our results are also related to a notable conjecture by Feng, Huang and Rao~\cite{FHR14}, which states that the existence of an affine embedding from $K$ into $F$ may enforce the algebraic dependence between their associated ratios even without assuming that $\dimh K=\dimh F$. Significant progress has been made toward this conjecture, see~\cite{AHW24,EKM10,FX18}. In particular, a resolution of the special case when $\dimh K=\dimh F$ was recently obtained as a corollary in an ongoing work~\cite{RX25} of the second author joint with Rao. The proofs of Theorems~\ref{thm:bk24}, \ref{thm:main1} and~\ref{thm:main2}(1) reveal that if in addition that either $K$ or $F$ is homogeneous, the algebraic dependence follows already from the existence of a nontrivial Lipschitz embedding---a weaker condition than the affinity requirement. However, as shown in~\cite{DWXX11}, such dependence is not guaranteed without the dimensional restriction.

The paper is organized as follows. Section 2 introduces some necessary preliminaries. Section 3 establishes our notational conventions and presents the proof of Theorem~\ref{thm:main1}. Building on these foundations, Sections 4 and 5 develop the construction of a well-behaved Lipschitz surjection and present the proof of Theorem~\ref{thm:main2}(1). Finally, Section 6 provides an explicit example that demonstrates the second part of Theorem~\ref{thm:main2}.

\paragraph{{\bf Notation}} We write $X\lesssim Y$ when $X\leq CY$ for some constant $C>0$ and write $X\approx Y$ when $X\lesssim Y$ and $Y\lesssim X$ hold simultaneously. For $A,B\subset\R^d$, we denote by $\diam(A)$ the diameter of $A$ and write $\dist(A,B) := \inf\{|a-b|: a\in A, b\in B\}$.

%--------------------------------
\section{Preliminaries}

We begin with a classical Lipschitz extension theorem of Kirszbraun.

\begin{lemma}[\cite{Kir34}]\label{lem:kirszbraun}
    Let $E\subset \R^n$ and let $f: E\to \R^m$ be any Lipschitz map. Then $f$ can be extended to a Lipschitz map $g:\R^n\to\R^m$ with $\lip(g)=\lip(f)$, where $\lip(f)$ denotes the Lipschitz constant of $f$, that is, 
    \[
        \lip(f) = \sup\Big\{ \frac{|f(x)-f(y)|}{|x-y|}: x\neq y \in E \Big\}.
    \]
\end{lemma}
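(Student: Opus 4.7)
The plan is to follow the classical route via Helly's theorem in $\R^m$. Write $L := \lip(f)$. A standard Zorn's lemma argument on the poset of $L$-Lipschitz extensions of $f$ (ordered by extension) produces a maximal element, and to prove that such a maximum is defined on all of $\R^n$ it suffices to handle a single one-point extension: for any $x_0 \in \R^n \setminus E$, I must produce $y_0 \in \R^m$ with $|y_0 - f(x)| \leq L|x_0 - x|$ for every $x \in E$. Equivalently, I must show
\[
    \bigcap_{x \in E} \overline{B}\bigl(f(x),\, L|x_0 - x|\bigr) \neq \emptyset.
\]

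Fixing any $x_1 \in E$, the ball $\overline{B}(f(x_1), L|x_0 - x_1|)$ is compact, so the intersection is nonempty as soon as every finite subfamily shares a common point. Helly's theorem in $\R^m$ then reduces the problem to showing that every $m+1$ of the balls intersect. Thus the core task is: for arbitrary $x_1,\dots,x_{m+1} \in E$ (all distinct from $x_0$; the other cases being trivial), verify
\[
    \bigcap_{i=1}^{m+1} \overline{B}\bigl(f(x_i),\, L|x_0 - x_i|\bigr) \neq \emptyset.
\]
I would argue by contradiction. Consider $\psi(y) := \max_i |y - f(x_i)|^2 / |x_0 - x_i|^2$, which is continuous and coercive, so attains its minimum at some $y^*$ with value $\lambda^2$; nonintersection forces $\lambda > L$. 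Let $I$ index the active constraints at $y^*$. The first-order optimality for a pointwise max of smooth convex functions yields weights $t_i \geq 0$ ($i \in I$), not all zero, such that, writing $s_i := t_i/|x_0 - x_i|^2$,
\[
    \sum_{i \in I} s_i \bigl(y^* - f(x_i)\bigr) = 0.
\]

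The heart of the argument, and the main obstacle, is to combine this vanishing identity with the Lipschitz bound $|f(x_i) - f(x_j)| \leq L|x_i - x_j|$ to force $\lambda \leq L$. I would expand
\[
    0 = \Bigl\| \sum_{i \in I} s_i\bigl(y^* - f(x_i)\bigr) \Bigr\|^2 \quad \text{and} \quad 0 \leq L^2 \Bigl\| \sum_{i \in I} s_i (x_0 - x_i) \Bigr\|^2
\]
via the polarization identity $2\langle u,v\rangle = |u|^2 + |v|^2 - |u-v|^2$. Since $|y^* - f(x_i)|^2 = \lambda^2|x_0 - x_i|^2$ on $I$, the first expansion rearranges to
\[
    \lambda^2 \Bigl(\sum_j s_j\Bigr)\Bigl(\sum_i s_i|x_0 - x_i|^2\Bigr) = \tfrac{1}{2}\sum_{i,j} s_i s_j\,|f(x_i) - f(x_j)|^2,
\]
and the second, after dominating $|f(x_i)-f(x_j)|^2 \leq L^2|x_i - x_j|^2$, yields the analogous inequality with $\lambda$ replaced by $L$ on the left. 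Comparing the two forces $\lambda^2 \leq L^2$, contradicting $\lambda > L$.

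The delicate bookkeeping of these polarization expansions is what makes the step succeed, and it is also precisely what fails in general normed spaces, where no inner-product identity is available; this is why the dimension-free preservation of the Lipschitz constant is such a sharp feature of the Euclidean setting. Once the $m+1$-ball intersection is secured, every preceding reduction is routine, and the theorem follows.
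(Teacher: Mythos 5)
Your argument is correct. The paper does not prove this lemma at all --- it is Kirszbraun's classical extension theorem, stated with a citation to \cite{Kir34} and used as a black box --- so there is no in-paper proof to compare against; what you have written is a faithful and complete rendition of the standard proof. All the steps check out: Zorn's lemma reduces to a one-point extension; the one-point extension is equivalent to the nonemptiness of $\bigcap_{x\in E}\overline{B}(f(x),L|x_0-x|)$; compactness of one ball plus the finite intersection property reduces to finite subfamilies; and the variational argument is sound. In particular, the two polarization expansions do combine as claimed: since $|y^*-f(x_i)|^2=\lambda^2|x_0-x_i|^2$ on the active set, the vanishing of $\sum_i s_i(y^*-f(x_i))$ gives $\lambda^2(\sum_j s_j)(\sum_i s_i|x_0-x_i|^2)=\tfrac12\sum_{i,j}s_is_j|f(x_i)-f(x_j)|^2$, while $0\le|\sum_i s_i(x_0-x_i)|^2$ gives $\tfrac12\sum_{i,j}s_is_j|x_i-x_j|^2\le(\sum_j s_j)(\sum_i s_i|x_0-x_i|^2)$; dominating $|f(x_i)-f(x_j)|^2$ by $L^2|x_i-x_j|^2$ and dividing by the strictly positive factor $(\sum_j s_j)(\sum_i s_i|x_0-x_i|^2)$ yields $\lambda^2\le L^2$, the desired contradiction. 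One small remark: the appeal to Helly's theorem is harmless but redundant --- your quadratic-form argument never uses that only $m+1$ balls are involved, so it already establishes the nonempty intersection of an arbitrary finite subfamily, and the compactness step alone finishes the reduction.
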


In what follows, we will fix two self-similar sets $K,F\subset\R^d$ with the same Hausdorff dimension $s$, generating by self-similar IFSs $\Phi=\{\varphi_i\}_{i\in I}$ and $\Psi=\{\psi_j\}_{j\in J}$ satisfying the SSC, respectively. Without loss of generality, we always assume that both of $K,F$ contain the origin and have diameter $1$. 

One of the principal advantages of self-similarity is that it allows us, via an appropriate limiting process, to extract universal geometric or analytic features that persist across scales. A fundamental step in our subsequent arguments is as follows. 

\begin{lemma}\label{lem:takinglimits}
    Let $P\geq 1$, $\gamma,L>0$ and let $[u,v]\subset(0,\infty)$ be an interval. Set $\mathcal{K}=\mathcal{K}(P,\gamma,u,v)$ to be the collection of all sets of the form $\bigcup_{t=1}^p (c_{t}O_{t}K+a_{t})$, where $1\leq p\leq P$, $c_{t}\in[u,v]$, $O_{t}$ is a $d\times d$ orthogonal matrix and $a_{t}\in\R^d$, such that 
    \[ 
        \min_{t\neq t'}\dist(c_{t}O_{t}K+a_{t}, c_{t'}O_{t'}K+a_{t'})\geq \gamma. 
    \]
    For any sequence $\{E_n\}_{n=1}^\infty\subset\mathcal{K}$ and Lipschitz maps $f_n: E_n \to F$ with $\sup_n\lip(f_n)\leq L$, there exists $E\in\mathcal{K}$ and a Lipschitz map $g: E \to F$ with $\lip(g)\leq L$ such that 
    \[
        \h^s(E) = \lim_{j\to\infty} \h^s(E_{n_j}) \quad\text{and}\quad \h^s(g(E)) \geq \limsup_{j\to\infty} \h^s(f_{n_j}(E_{n_j}))
    \]
    for some subsequence $\{n_j\}$.
\end{lemma}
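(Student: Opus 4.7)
The plan is a standard compactness argument (pigeonhole, compactness of $[u,v]\times O(d)$, Kirszbraun extension, and Arzela--Ascoli) with one crucial twist: the $p$ pieces of each $E_n$ should be treated separately, because the translations $a_{t,n}$ may be unbounded and no direct subsequential limit of $E_n$ as subsets of $\R^d$ need exist.

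First I extract the parameter limits. By pigeonhole, pass to $p_n \equiv p$; by compactness of $[u,v]$ and $O(d)$, pass further so $c_{t,n}\to c_t\in[u,v]$ and $O_{t,n}\to O_t\in O(d)$ for each $t=1,\ldots,p$. Next I produce a uniform map limit on each piece: for each $t$, set $h_{t,n}(x):=f_n(x+a_{t,n})$ on $c_{t,n}O_{t,n}K\subset\overline{B(0,v)}$, extend by Lemma~\ref{lem:kirszbraun} to $\R^d$ with the same Lipschitz constant, and observe uniform boundedness on $\overline{B(0,v+1)}$ (since $h_{t,n}(0)=f_n(a_{t,n})\in F\subset\overline{B(0,1)}$). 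Arzela--Ascoli, diagonalized over $t$, gives a subsequence $\{n_j\}$ and Lipschitz maps $h_t$ of constant $\leq L$ with $h_{t,n_j}\to h_t$ uniformly on $\overline{B(0,v+1)}$. Hausdorff convergence $c_{t,n_j}O_{t,n_j}K\to c_tO_tK$ together with closedness of $F$ then forces $h_t(c_tO_tK)\subset F$.

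The final step is re-pasting. Choose $b_t\in\R^d$ placing the pieces $c_tO_tK+b_t$ at pairwise distance at least $\max(\gamma,1/L)$ (trivially possible along a line), let $E:=\bigcup_{t=1}^{p}(c_tO_tK+b_t)\in\mathcal{K}$, and define $g(x):=h_t(x-b_t)$ on the $t$-th piece. Within a single piece $\lip(g)\leq L$; across pieces $|g(x)-g(y)|\leq\diam F=1\leq L|x-y|$ by the chosen separation. The identity $\h^s(E_{n_j})=\sum_t c_{t,n_j}^s\h^s(K)\to\sum_t c_t^s\h^s(K)=\h^s(E)$ is immediate. Finally, $f_{n_j}(E_{n_j})=\bigcup_t h_{t,n_j}(c_{t,n_j}O_{t,n_j}K)$ converges in Hausdorff distance to $g(E)\subset F$; since $\h^s|_F$ is a finite Borel measure, the standard upper-semicontinuity argument ($f_{n_j}(E_{n_j})$ eventually lies inside the $\epsilon$-neighborhood of $g(E)$, whose $\h^s|_F$-measure decreases to $\h^s(g(E))$ as $\epsilon\downarrow 0$) yields $\h^s(g(E))\geq\limsup_j\h^s(f_{n_j}(E_{n_j}))$.

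The main obstruction is the possibly unbounded translations. The remedy---exploiting that $\mathcal{K}$ imposes only a pairwise separation constraint and no location constraint---is what allows the re-pasting step to succeed without affecting either the Lipschitz bound (by enforcing a cross-piece gap of at least $1/L$) or the measure identity for $E$ (since all $p$ pieces survive in the limit).
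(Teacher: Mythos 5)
Your proposal is correct and follows essentially the same route as the paper: both handle the unbounded translations by recentering the pieces (the paper translates each piece to a fixed $b_t$ before extending and applying Arzel\'a--Ascoli, you recenter each piece at the origin and re-paste at the end), and both conclude with Kirszbraun extension, uniform convergence, and the upper-semicontinuity of $\h^s|_F$ on shrinking neighborhoods of the compact limit image. All the key checks (uniform pointwise bounds via $0\in K$, $F\subset\overline{B(0,1)}$, the cross-piece Lipschitz estimate from the $1/L$ separation, and $h_t(c_tO_tK)\subset F$) are present and sound.
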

\begin{proof}
    Since $E_n\in\mathcal{K}$, we can write it as $E_n = \bigcup_{t=1}^{p_n} (c_{n,t}O_{n,t}K+a_{n,t})$. Passing to a subsequence if necessary, we may assume that $p_n\equiv p$ for some $1\leq p\leq P$. Put $\widetilde{\gamma}:=v+L^{-1}+\gamma$ and $b_t:=(3t\widetilde{\gamma},0,\ldots,0)\in\R^d$ for $1\leq t\leq p$.  For $n\geq 1$, it is easy to see that when $t\neq t'$,
    \begin{align}
        \dist(c_{n,t}O_{n,t}K \, & +b_t, c_{n,t'}O_{n,t'}K+b_{t'}) \notag \\
        &\geq |b_t-b_{t'}|-\diam(c_{n,t}O_{n,t}K+b_t)-\diam(c_{n,t'}O_{n,t'}K+b_{t'}) \notag \\
        &\geq 3\widetilde{\gamma}-v-v \notag \\
        &> \widetilde{\gamma} > \max\{L^{-1},\gamma\}. \label{eq:originaldiscond}
    \end{align}
    Shifting $f$ slightly on each piece, we obtain a new sequence of maps given by
    \begin{align*}
        g_n : \bigcup_{t=1}^p (c_{n,t}O_{n,t}K+b_t) &\to F \\
        c_{n,t}O_{n,t}x+b_t &\mapsto f_n(c_{n,t}O_{n,t}x+a_{n,t}),
    \end{align*}
    which is well defined because the domain is a disjoint union (recall~\eqref{eq:originaldiscond}). Clearly,
    \begin{equation}\label{eq:gnequalsfnkn}
        g_n\Big( \bigcup_{t=1}^p (c_{n,t}O_{n,t}K+b_t) \Big) = f_n\Big( \bigcup_{t=1}^p (c_{n,t}O_{n,t}K+a_{n,t}) \Big) = f_n(E_n).
    \end{equation}

    Fix any $n\geq 1$. Let $x,y\in\bigcup_{t=1}^p (c_{n,t}O_{n,t}K+b_t)$, say $x=c_{n,t}O_{n,t}\alpha+b_t$ and $y=c_{n,t'}O_{n,t'}\beta+b_{t'}$. If $t=t'$ then
    \begin{align*}
        |g_n(x)-g_n(y)| &= |f_n(c_{n,t}O_{n,t}\alpha+a_{n,t}) - f_n(c_{n,t}O_{n,t}\beta+a_{n,t})| \\
        &\leq \lip(f_n)\cdot c_{n,t}|\alpha-\beta| \\
        &\leq L|x-y|.
    \end{align*} 
    If $t\neq t'$ then  
    \begin{align*}
        |g_n(x)-g_n(y)| &\leq 1 && \text{(since $\diam(F)=1$)} \\
        &\leq L\dist(c_{n,t}O_{n,t}K+b_t, c_{n,t'}O_{n,t'}K+b_{t'}) &&\text{(by~\eqref{eq:originaldiscond})} \\
        &\leq L|x-y|.
    \end{align*}
    Thus $g_n$ is Lipschitz as well with $\lip(g_n)\leq L$.

    Passing to a subsequence and rearranging if necessary, we may assume that for every $1\leq t\leq p$, both of the sequences $\{c_{n,t}\}_n, \{O_{n,t}\}_n$ are convergent, say $c_{n,t}\to c_t\in [u,v]$ and $O_{n,t}\to O_t$. Writing $E:=\bigcup_{t=1}^p (c_tO_tK+b_t)$, we have by~\eqref{eq:originaldiscond} and the convergence that $E\in\mathcal{K}$. Also, 
    \[
        \h^s(E) = \sum_{t=1}^p c_t^s\h^s(K) = \lim_{n\to\infty} \sum_{t=1}^p c_{n,t}^s\h^s(K) = \lim_{n\to\infty} \h^s(E_n).
    \]

    To find a limit of $g_n$, we first pick a large $R>0$ so that $\bigcup_{t=1}^p (c_{n,t}O_{n,t}K+b_t)\subset B(0,R)$ for all $n\geq 1$. By Lemma~\ref{lem:kirszbraun}, each $g_n$ extends to a Lipschitz map $\widetilde{g}_n: B(0,R) \to \R^d$ with $\lip(\widetilde{g}_n)=\lip(g_n)\leq L$. In particular, $\{\widetilde{g}_n\}$ is equicontinuous. Furthermore, since $\widetilde{g}_n(B(0,R))\cap F\neq\varnothing$ and
    \[
        \diam(\widetilde{g}_n(B(0,R))) \leq \lip(\widetilde{g}_n)\cdot\diam(B(0,R)) \leq 2LR,
    \]
    $\widetilde{g}_n(B(0,R))$ is contained in the $2LR$-neighborhood of $F$ for all $n$, that is, $\{\widetilde{g}_n\}$ is pointwise bounded. Then by Arzel\'a-Ascoli theorem (e.g., see~\cite[Corollary 1.8.25]{Tao10}), there exists a subsequence $\{\widetilde{g}_{n_j}\}_{j=1}^\infty$ which converges uniformly to a limit map, say $g$. Clearly, $g$ is also Lipschitz with $\lip(g)\leq L$.

    Finally, since $\widetilde{g}_n$ extends $g_n$,
    \begin{equation}\label{eq:tildegextendsg}
        \widetilde{g}_n\Big( \bigcup_{t=1}^p (c_{n,t}O_{n,t}K+b_t) \Big) = g_n\Big( \bigcup_{t=1}^p (c_{n,t}O_{n,t}K+b_t) \Big) \subset F, \quad \forall n\geq 1.
    \end{equation}
    Then we have by the compactness of $F$ and all the convergences that 
    \[
        g(E) = g\Big( \bigcup_{t=1}^p (c_{t}O_{t}K+b_t) \Big) \subset F
    \]
    and for every $\varepsilon>0$, 
    \[
        \widetilde{g}_{n_j}\Big( \bigcup_{t=1}^p (c_{n_j,t}O_{n_j,t}K+b_t) \Big) \subset \mathcal{N}_{\varepsilon}(g(E))\cap F \quad\text{ for all large $j$},
    \]
    where $\mathcal{N}_\varepsilon(\cdot)$ denotes the open $\varepsilon$-neighborhood. 
    Note that the restriction of $\h^s$ on $F$ is a Radon measure and, since $g(E)\subset F$,
    \[
        \h^s(g(E)) =\h^s(g(E)\cap F) = \lim_{\varepsilon\to 0} \h^s(\mathcal{N}_\varepsilon(g(E))\cap F).
    \]
    Therefore, 
    \begin{align*}
        \h^s(g(E)) &\geq \limsup_{j\to\infty} \h^s\Big( \widetilde{g}_{n_j}\Big( \bigcup_{t=1}^p (c_{{n_j},t}O_{{n_j},t}K+b_t) \Big) \Big) \\
        &= \limsup_{j\to\infty} \h^s\Big( g_{n_j}\Big( \bigcup_{t=1}^p (c_{{n_j},t}O_{{n_j},t}K+b_t) \Big) \Big) \\
        &= \limsup_{j\to\infty} \h^s(f_{n_j}(E_{n_j})).  && \text{(by~\eqref{eq:gnequalsfnkn})}.
    \end{align*}
    This completes the proof.
\end{proof}

\begin{remark}\label{rem:eventuallydense}
    Under the assumptions of the above lemma, if $f_n(E_n)$ is eventually dense in $F$, i.e., $f_n(E_n)$ converges to $F$ under the Hausdorff distance $\hdist$, then $g(E)=F$. This is  because $f_{n_j}(E_{n_j})=\widetilde{g}_{n_j}\big(\bigcup_{t=1}^p (c_{n_j,t}O_{n_j,t}K+b_t)\big)\to g(E)$  under $\hdist$ (recall~\eqref{eq:gnequalsfnkn}, ~\eqref{eq:tildegextendsg} and all the previous convergences).  
\end{remark}

Write $r_i$ (resp. $\lambda_j$) to be the contraction ratio of $\varphi_i$, $i\in I$ (resp. $\psi_j$, $j\in J$). When either $\Phi$ or $\Psi$ is homogeneous, the algebraic dependence of the corresponding ratios is in fact sufficient to establish the Lipschitz equivalence between $K$ and $F$. While this fact is implicitly contained in the work of Balka and Keleti~\cite{BK24}, we state it explicitly as a separate lemma for the reader's convenience.

\begin{lemma}\label{lem:logdepimpliesequi}
    Assume that $r_i\equiv r$ for some $0<r<1$. If $\frac{\log\lambda_j}{\log r}\in\mathbb{Q}$ for all $j\in J$, then $K, F$ are Lipschitz equivalent.
\end{lemma}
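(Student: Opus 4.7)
The plan is to construct an explicit bi-Lipschitz bijection between $K$ and $F$ by matching the cells of their natural symbolic partitions at compatible scales. Under the SSC, the coding maps $\pi_K\colon I^{\N}\to K$ and $\pi_F\colon J^{\N}\to F$ are bi-Lipschitz homeomorphisms between the corresponding ultrametric symbolic spaces, where $d_r(\bi,\bi') := r^{|\bi\wedge\bi'|}$ and $d_\lambda(\bj,\bj') := \prod_{l=1}^{|\bj\wedge\bj'|}\lambda_{j_l}$, and the attractors themselves. It therefore suffices to construct a bi-Lipschitz bijection at the symbolic level.

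The algebraic hypothesis lets us choose $N\in\N$ with $\lambda_j^N = r^{p_j}$ for some $p_j\in\N$, so that every cylinder $\psi_{\bj}(F)$ (with $\bj=(j_1,\ldots,j_k)\in J^*$) has diameter $r^{M(\bj)/N}$, where $M(\bj):=p_{j_1}+\cdots+p_{j_k}\in\Z_+$. For each large $L\in\N$, I would use the stopping-time family $\S_L := \{\bj\in J^*: \lambda_{\bj}\le r^L < \lambda_{\bj^-}\}$ (with $\bj^-$ denoting $\bj$ with its last letter removed), which partitions $F$ into cells of comparable diameters in the \emph{finite} set $\{r^{L+k/N}: 0\le k<p_{\max}\}$, each lying in $(\lambda_{\min}r^L, r^L]$. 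Simultaneously, the $L$-th iterate $\Phi^L$ partitions $K$ into exactly $|I|^L$ cells of diameter $r^L$. The mass-conservation identity $\sum_{\bj\in\S_L}\lambda_{\bj}^s = 1 = |I|^L r^{sL}$ yields $|\S_L|\asymp|I|^L$, so the two partitions have comparable cardinality.

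I would then build the bi-Lipschitz bijection by a hierarchical, scale-by-scale matching: group the stopping-time cells of $F$ by their size class, match each class to a suitable collection of $K$-cells of equal total $\h^s$-mass, and iterate the construction inside each pair of matched cells. The algebraic condition enters essentially here because it restricts the $F$-cell sizes to finitely many types, reducing the combinatorial matching to a finite-state graph-directed system in which the residue $M(\bj)\bmod N$ plays the role of a state. The main obstacle is to upgrade the asymptotic estimate $|\S_L|\asymp|I|^L$ to an exact bijective matching that is consistent across all scales; I would handle this in the spirit of the matchable condition of \cite{RRW12} or the commensurable-ratio framework of \cite{XX21}, by iterating the matching on the finite-state system and exploiting self-similarity to ensure that the limit map is a well-defined bi-Lipschitz bijection from $K$ onto $F$.
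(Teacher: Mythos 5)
There is a genuine gap. Your construction hinges on an \emph{exact} hierarchical mass matching: each group of stopping-time cells of $F$ must be paired with a union of cells of $K$ of exactly equal $\h^s$-mass (equality, not just comparability, is forced — by the Cooper--Pignataro/Falconer--Marsh rigidity, any bi-Lipschitz map between dust-like self-similar sets is measure-linear on some small cell, so an asymptotic count $|\S_L|\asymp (\#I)^L$ cannot be ``upgraded'' by hand). But every $K$-cell has mass $r^{\ell s}=(\#I)^{-\ell}$, a rational number, whereas an $F$-cell $\psi_{\bj}(F)$ has mass $\lambda_{\bj}^s=(\#I)^{-M(\bj)/N}$, which is \emph{irrational} whenever $M(\bj)/N\notin\Z$ and $\#I$ is not a perfect power. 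No finite (or countable, summed) collection of rational masses can equal an irrational one, so the finite-state matching you describe gets stuck the moment some exponent $\log\lambda_j/\log r$ is a non-integer rational. The lemma is nonetheless true, and the reason is the step your sketch omits: one must first \emph{prove} that the exponents are integers. This is where the hypotheses $\#I\cdot r^s=1$ and $\sum_{j\in J}\lambda_j^s=1$ enter in an essential, number-theoretic way. After reducing to the case where $m:=\#I$ is not a proper power (if $\#I=m^k$, replace $K$ by an equivalent homogeneous set with $m$ maps of ratio $r^{1/k}$), the identity $\sum_{j\in J} m^{-\alpha_j}=1$ with $\alpha_j=\log\lambda_j/\log r\in\Q$ forces all $\alpha_j\in\Z$ (this is Lemma~7.1 of \cite{BK24}). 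Only then does your matching scheme become the classical construction of \cite[Theorem~1.5]{RRW12}, which is exactly how the paper concludes.

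Two smaller points: (i) you never perform the perfect-power reduction, without which the integrality statement is false as stated (e.g.\ $\#I=4$, $r^s=1/4$, $\lambda_j^s=1/2$ is legitimate and has $\alpha_j=1/2$); (ii) even granting integer exponents, ``match each class to a suitable collection of $K$-cells of equal total mass, and iterate'' needs a decomposition lemma guaranteeing that a union of $K$-cells of a given mass can be split into sub-unions with prescribed masses at a bounded number of additional levels (compare Lemma~\ref{lem:xixiong21} in Section~6, or the argument in \cite{RRW12}); this is where the matchable condition actually does work, and it should be invoked explicitly rather than gestured at.
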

\begin{proof}
    Write $\lambda_j=r^{\alpha_j}$, where $\alpha_j\in\mathbb{Q}$ for $j\in J$. The SSC implies that $\#I\cdot r^s=1$ and $\sum_{j\in J}\lambda_j^s=1$. If $\#I=m^k$ for some integers $m,k>1$, then $K$ is Lipschitz equivalent to any self-similar set $E=\bigcup_{t=1}^m g_t(E)$ with the SSC such that each $g_t$ has similarity ratio $r^{1/k}$ (e.g., see~\cite{RRW12}). So we may assume that $k=1$ and $m$ is not of the form $p^\ell$ where $p,\ell$ are integers strictly larger than $1$. Then $r^{s}=m^{-1}$ and thus 
    \[
        1 = \sum_{j\in J} \lambda_j^s = \sum_{j\in J} r^{\alpha_js} = \sum_{j\in J} m^{-\alpha_j}.
    \]
    It follows from~\cite[Lemma 7.1]{BK24} that $\alpha_j$ are all integers. Then by~\cite[Theorem 7.7]{BK24} or~\cite[Theorem 1.5]{RRW12}, $K$ and $F$ are Lipschitz equivalent.
\end{proof}

%--------------------------------
\section{Proof of Theorems~\ref{thm:main1}}

In this section, we present the proof of Theorem~\ref{thm:main1}. Regarding $\Phi=\{\varphi_i\}_{i\in I}$ and $\Psi=\{\psi_j\}_{j\in J}$, we adopt the following customary notations in the rest of this paper.

Let $I^*:=\bigcup_{n=1}^\infty I^n$ be the set of all finite words over the alphabet $I$ with the convention that $I^0=\{\vartheta\}$, where $\vartheta$ denotes the empty word. For $n\geq 0$ and $\i\in I^n$, we call $n$ the \emph{length} of $\i$ and denote it by $|\i|$. When $n\geq 1$, we write for $\i=i_1\cdots i_n\in I^n$ that 
\[
    \varphi_{\i}:=\varphi_{i_1}\circ\cdots\circ\varphi_{i_n},\quad K_{\i}:=\varphi_{\i}(K),\quad r_{\i}:=r_{i_1}\cdots r_{i_n}, \quad \i^{-}:=i_1\cdots i_{n-1},
\]
with the convention that $\i^{-}=\vartheta$ if $n=1$; when $n=0$, we write $\varphi_{\vartheta}$ to be the identity map and let $K_\vartheta:=K$ and $r_{\vartheta}:=1$. For any $\i,\i'\in I^*$, $\i\wedge \i'$ denotes the longest common prefix of $\i$ and $\i'$. Analogously, for the alphabet $J$, we write $J^*:=\bigcup_{n=1}^\infty J^n$, $J^0:=\{\vartheta\}$ and adopt the notation $\psi_{\j}$, $F_{\j}$, $\lambda_{\j}$, $\j^{-}$ and $\j\wedge\j'$ in the same manner as above.

Furthermore, let $\underline{r}:=\min_{i\in I} r_i$, $\underline{\lambda}:=\min_{j\in J} \lambda_j$, $\Delta_K:=\min_{i\neq i'\in I}\dist(K_i,K_{i'})$ and $\Delta_F:=\min_{j\neq j'\in J}\dist(F_j, F_{j'})$. The SSC ensures that $\Delta_K$ and $\Delta_F$ are both positive.

Our first observation is a reduction of the problem to its embedding case.

\begin{lemma}\label{lem:lipschitzembedding}
    For any Lipschitz map $f: K\to\R^d$ with $f(K)\cap F\neq\varnothing$, there is a Lipschitz map $g:K\to F$ with $g(K)=f(K)\cap F$.
\end{lemma}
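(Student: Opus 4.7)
The plan is to construct a Lipschitz retraction $\rho:K\to A$, where $A:=\{x\in K:f(x)\in F\}$, and then take $g:=f\circ\rho$. Since $F$ is closed and $f$ is continuous, $A$ is closed in $K$; and $A\neq\varnothing$ because $f(K)\cap F\neq\varnothing$. Once $\rho$ is in hand, $g$ is automatically Lipschitz as a composition and satisfies $g(K)=f(\rho(K))=f(A)=f(K)\cap F\subset F$, which is the desired conclusion.

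To build $\rho$, I would use the SSC to partition $K\setminus A$ into ``maximal $A$-free cells''. Each $x\in K\setminus A$ has a unique symbolic code $(i_1,i_2,\ldots)\in I^{\N}$, and the nested cells $K_{i_1\cdots i_n}$ shrink to $\{x\}$. Since $A$ is closed, there must be a smallest $n\geq 1$ with $K_{i_1\cdots i_n}\cap A=\varnothing$; otherwise a sequence in $A$ would converge to $x$, forcing $x\in A$. Collecting these minimal prefixes over all $x\in K\setminus A$ yields a countable set $\mathcal{M}\subset I^*$ such that $\{K_\i\}_{\i\in\mathcal{M}}$ disjointly covers $K\setminus A$ and each $\i\in\mathcal{M}$ satisfies $K_\i\cap A=\varnothing$ but $K_{\i^{-}}\cap A\neq\varnothing$. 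Writing the last letter of $\i$ as $i_{|\i|}$, the decomposition $K_{\i^{-}}=\bigsqcup_{k\in I}K_{\i^{-}k}$ together with $K_{\i^{-}i_{|\i|}}=K_\i$ forces some $k\neq i_{|\i|}$ with $A\cap K_{\i^{-}k}\neq\varnothing$; pick an arbitrary point $a_\i$ there. Finally, set $\rho(x):=x$ for $x\in A$ and $\rho(x):=a_\i$ for $x\in K_\i$ with $\i\in\mathcal{M}$.

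The main step is to verify that $\rho$ has Lipschitz constant at most $1/\Delta_K$. Given $x,x'\in K$ with $\rho(x)\neq\rho(x')$, let $\eta\in I^*$ be the longest common prefix of the symbolic codes of $x$ and $x'$. A short case check shows that both $\rho(x),\rho(x')\in K_\eta$: when $x\in A$ this is automatic, while if $x\in K_\i$ with $\i\in\mathcal{M}$, then $\rho(x)\neq\rho(x')$ forces $x'\notin K_\i$, so $\eta$ must be a strict prefix of $\i$, hence also a prefix of $\i^{-}k$, giving $a_\i\in K_\eta$ (and the analogous statement for $x'$ follows by symmetry). The SSC places $x$ and $x'$ in distinct sub-cells of $K_\eta$, so $|x-x'|\geq r_\eta\Delta_K$, while $|\rho(x)-\rho(x')|\leq\diam(K_\eta)=r_\eta$. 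I do not anticipate any substantial obstacle; the argument is powered by the uniform lower bound $\Delta_K>0$ that the SSC supplies on the relative gap between disjoint cells at every scale, which is exactly what is needed to absorb the ``jumps'' that $\rho$ makes across maximal $A$-free cells.
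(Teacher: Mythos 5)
Your proof is correct and is essentially the same as the paper's: both identify the maximal cells $K_{\i}$ with $f(K_{\i})\cap F=\varnothing$ (equivalently $K_{\i}\cap f^{-1}(F)=\varnothing$) but $f(K_{\i^-})\cap F\neq\varnothing$, collapse each to a well-chosen point associated with the parent cell, and verify the Lipschitz bound via the longest common prefix and the separation constant $\Delta_K$. Your packaging of this as a Lipschitz retraction $\rho$ onto $f^{-1}(F)$ followed by composition with $f$ is only a cosmetic reformulation of the paper's direct definition of $g$.
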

\begin{proof}
	We begin by constructing a map $g:K\to F$ with $g(K)=f(K)\cap F$. 
    For any $\i\in I^*$ such that $f(K_{\i})\cap F=\varnothing$ but $f(K_{\i^-})\cap F\neq\varnothing$, choose an arbitrary point $y_{\i}$ in $f(K_{\i^-})\cap F$ and define $g(K_{\i})=\{y_{\i}\}$. So for those $x\in K$ with $f(x)\notin F$, $g(x)$ is defined as $y_\omega$, where $\omega\in I^*$ is the unique word such that $x\in K_{\omega}$, $f(K_{\omega})\cap F=\varnothing$ and $f(K_{\omega^-})\cap F\neq\varnothing$. Note that the existence of such an $\omega$ is guaranteed by the closedness of $F$. For $x\in K$ with $f(x)\in F$, we simply set $g(x)=f(x)$. In this way, the map $g: K\to F$ is well defined and it is clear that $g(K)=f(K)\cap F$. 
    
    We claim that $\diam(g(K_{\i}))\leq \diam(f(K_{\i}))$ for all $\i\in I^*$. Indeed, if $f(K_{\i})\cap F=\varnothing$, then $g(K_{\i})$ is a singleton and the inequality holds trivially. If $f(K_{\i})\cap F\neq\varnothing$, then by the definition of $g$,
    \[
        g(K_{\i}) \subset \bigcup_{i\in I: f(K_{\i i})\cap F\neq\varnothing} f(K_{\i i}) \subset f(K_{\i}),
    \]
    which also implies the desired inequality.
    
    It remains to prove that $g$ is Lipschitz. For any distinct $x,x'\in K$, let $\omega\in I^*\cup I^0$ be the longest word such that $x,x'\in K_{\omega}$. So 
    \[
        |x-x'| \geq \min_{i\neq i'} \dist(K_{\omega i}, K_{\omega i'}) = r_{\omega}\Delta_K = \Delta_K\diam(K_{\omega}).
    \] 
    Then by the previous claim,
    \begin{align*}
        |g(x)-g(x')| \leq \diam(g(K_{\omega})) &\leq \diam(f(K_{\omega})) \\
        &\leq \lip(f)\cdot\diam(K_{\omega}) \leq \lip(f)\cdot\Delta_K^{-1}|x-x'|.
    \end{align*}
    So $g$ is Lipschitz.
\end{proof}

The proof of Theorem~\ref{thm:main1} then reduces to establishing the next result.

\begin{theorem}\label{thm:main1reduction}
    If there is a Lipschitz map $f: K\to F$ with $\h^s(f(K))>0$, then $K$ admits a Lipschitz surjection onto $F$.
\end{theorem}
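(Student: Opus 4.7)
The plan is to construct the surjection by producing, at each small scale, a uniformly Lipschitz map from a bounded disjoint union of orthogonal copies of $K$ into $F$ whose image exhausts $F$ in Hausdorff distance, then passing to a limit via Lemma~\ref{lem:takinglimits} and Remark~\ref{rem:eventuallydense}. First, I locate a favourable point $y^* \in f(K) \cap F$ by applying the Lebesgue differentiation theorem on $F$ to the finite Radon measures $\h^s|_F$ and $\nu := f_*(\h^s|_K)$. Since $\nu(A) = \h^s(f^{-1}(A)) \geq \lip(f)^{-s}\h^s(A \cap f(K))$ for every Borel $A \subset F$, the Radon--Nikodym density $h := d\nu/d\h^s|_F$ is both finite and nonzero at $\h^s$-a.e.\ point of $f(K)$. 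I choose $y^*$ where simultaneously the Lebesgue density of $f(K)$ in $F$ equals $1$ and $h(y^*) < \infty$, and let $F_{\j_n} \ni y^*$ be the nested sequence of cells with $\lambda_{\j_n} \to 0$. The $s$-regularity of $F$ together with the inclusion $F_{\j_n} \subset B(y^*, \lambda_{\j_n})$ upgrades ball-based densities to cell-based ones:
\[
    \h^s(f(K) \cap F_{\j_n}) = (1 - o(1))\, \h^s(F_{\j_n}) \quad \text{and} \quad \h^s(f^{-1}(F_{\j_n})) \leq C \lambda_{\j_n}^s.
\]

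At each scale I consider the partition $\U_n := \{\i : r_\i \leq \lambda_{\j_n} < r_{\i^-}\}$ of $K$ and the sub-collection of ``essential'' cells
\[
    \U_n^\ast := \{\i \in \U_n : \h^s(f(K_\i) \cap F_{\j_n}) \geq \delta \lambda_{\j_n}^s\}
\]
for a fixed small $\delta > 0$. For each $\i \in \U_n^\ast$, the rescaled map $\phi_{n,\i} := \psi_{\j_n}^{-1} \circ f \circ \varphi_\i : K \to \R^d$ is Lipschitz with $\lip(\phi_{n,\i}) \leq \lip(f)/\underline{r}$, and its image meets $F$ in $\h^s$-measure at least $\delta$; Lemma~\ref{lem:lipschitzembedding} lifts each $\phi_{n,\i}$ to a Lipschitz map $\tilde\phi_{n,\i} : K \to F$. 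Placing the copies of $K$ indexed by $\U_n^\ast$ at pairwise well-separated locations yields a single Lipschitz map $g_n : E_n \to F$ with $E_n \in \mathcal{K}(\#\U_n^\ast, \gamma, 1, 1)$. The two claims to verify are that $\#\U_n^\ast \leq P$ uniformly in $n$ and that $\h^s(F \setminus g_n(E_n)) \to 0$. Granting these, the $s$-regularity of $F$ (every ball centred on $F$ has $\h^s$-mass $\gtrsim \varepsilon^s$) upgrades the $\h^s$-convergence to Hausdorff convergence, and Lemma~\ref{lem:takinglimits} combined with Remark~\ref{rem:eventuallydense} produces a Lipschitz map $g : E \to F$ with $g(E) = F$ and $E \in \mathcal{K}(P, \gamma, 1, 1)$ a disjoint union of $p \leq P$ orthogonal copies of $K$ at ratio $1$.

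To pass from $E$ to $K$, I fix $k$ with $(\#I)^k \geq p$, choose disjoint cells $K_{\tau_1}, \ldots, K_{\tau_p}$ of $K$ with $r_{\tau_t} \geq \underline{r}^k$, identify each $K_{\tau_t}$ bi-Lipschitzly with the $t$-th copy of $K$ in $E$ (expansion factor at most $\underline{r}^{-k}$, hence bounded), and compose with $g$ to obtain a Lipschitz map $h_0 : \bigsqcup_t K_{\tau_t} \to F$; the inter-cell gaps are bounded below, so $h_0$ is globally Lipschitz. Extending $h_0$ via Kirszbraun (Lemma~\ref{lem:kirszbraun}) to a map $\tilde h$ on $\R^d$ and then applying Lemma~\ref{lem:lipschitzembedding} to the restriction to $K$ yields a Lipschitz map $g^* : K \to F$; since $\tilde h(K) \supset h_0(\bigsqcup_t K_{\tau_t}) = g(E) = F$, this map $g^*$ is surjective. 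The hardest part will be the bounded-pigeonhole claims in the previous paragraph: at scale $\lambda_{\j_n}$ the partition $\U_n$ has roughly $\lambda_{\j_n}^{-s}$ cells, so the uniform bound $\#\U_n^\ast \leq P$ must combine the finiteness of the pushforward density $h(y^*)$ with a covering-multiplicity estimate on $f$ near $y^*$ (via the area-formula inequality $\int_F \# f^{-1}(y)\, d\h^s(y) \leq \lip(f)^s \h^s(K)$), while the near-full coverage $\h^s(F \setminus g_n(E_n)) \to 0$ relies on the density-$1$ property at $y^*$. Carrying out this double counting is, I expect, the main obstacle of the proof.
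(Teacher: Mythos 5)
Your overall architecture matches the paper's (localize at a good cell of $F$, rescale, pass to a limit via Lemma~\ref{lem:takinglimits}, then extract a surjection from $K$ itself), but the two pivotal steps are carried out by genuinely different means, and I believe both of your routes work. For the uniform cell count --- the paper's Proposition~\ref{prop:countingprop}, obtained there by a Vitali covering of the density points, a double counting over the subcollection $\mathcal{M}$ of high-density cells, and a pigeonhole --- you instead differentiate the pushforward measure $\nu=f_*(\h^s|_K)$ against $\h^s|_F$. This does close the gap you flag as ``the main obstacle,'' and in fact more easily than you anticipate: by Mattila's differentiation theorem the upper derivative $\overline{D}(\nu,\h^s|_F,y)$ is finite $\h^s$-a.e., so at your point $y^*$ one gets $\h^s\bigl(f^{-1}(B(y^*,C\lambda_{\j_n}))\bigr)\lesssim\lambda_{\j_n}^s$; since every cell $K_{\i}$ whose image meets $F_{\j_n}$ sits inside this preimage, and these cells are pairwise disjoint each of measure $\gtrsim\lambda_{\j_n}^s$, the count of \emph{all} such cells (not merely the essential ones) is uniformly bounded --- no Eilenberg/area-formula multiplicity estimate is needed. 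Your endgame also differs: where the paper invokes Baire's theorem to find one copy whose image contains some $F_{\j}$ and then rescales, you embed the $p$ unit copies of $E$ bi-Lipschitzly into disjoint cells $K_{\tau_1},\dots,K_{\tau_p}$, extend by Kirszbraun, and apply Lemma~\ref{lem:lipschitzembedding}; this is more explicit and equally valid. On balance your counting argument is shorter than the paper's, at the price of invoking slightly heavier differentiation theory.

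Two small repairs are needed. First, with a \emph{fixed} threshold $\delta>0$ in $\U_n^\ast$, the discarded non-essential cells can carry up to $Q\delta\lambda_{\j_n}^s$ of the mass of $F_{\j_n}$, so $\h^s(F\setminus g_n(E_n))$ tends only to $O(\delta)$, not to $0$; either let $\delta=\delta_n\to 0$ or, better, drop the threshold entirely and take all cells meeting $F_{\j_n}$, which the argument above shows is still a uniformly bounded family. Second, your stopping rule $r_{\i}\leq\lambda_{\j_n}<r_{\i^-}$ does not by itself guarantee $\diam(f(K_{\i}))$ is smaller than the gaps between level-$n$ cells of $F$ when $\lip(f)>\Delta_F$; you should calibrate the $K$-scale by the factor $\Delta_F/\lip(f)$ as the paper does in~\eqref{eq:originalinjn} (this costs nothing in the counting, since the relevant preimage is still contained in a ball of radius $\approx\lambda_{\j_n}$ about $y^*$). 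With these adjustments the proof goes through.
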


In the rest of this section, let us fix such a Lipschitz embedding $f:K\to F$. For notational simplicity, write $L:=\lip(f)$.

%---------------------------------
\subsection{Localization for rescaling}

Fix any small $0<\delta<1$ and set 
\begin{equation}\label{eq:originalinjn}
    \left\{\begin{array}{l} I_n := \{\i\in I^*: r_{\i} \leq \delta^n< r_{\i^-}\}, \\ J_n:= \{\j\in J^*: \lambda_{\j} \leq \delta_n < \lambda_{\j^-}\}, \end{array}\right.
\end{equation}
where $\delta_n:=\frac{L}{\Delta_F}\cdot\delta^n$ is a constant multiple of $\delta^n$. The next lemma records several basic facts that will be used frequently later. Recall that $\underline{r}:=\min_{i\in I}r_i$ and $\underline{\lambda}:=\min_{j\in J}\lambda_j$.

\begin{lemma}\label{lem:factsinq2}
    Let $n\geq 1$.
    \begin{enumerate}
        \item We have $\underline{r}\delta^n<r_{\i}\leq \delta^n$ for $\i\in I_n$ and $\underline{\lambda}\delta_n < \lambda_{\j} \leq \delta_n$ for $\j\in J_n$. In particular, $\#I_n\approx \#J_n\approx \delta^{-ns}$, where the implicit constants are independent of $n$.
        \item For every distinct $\i,\i'\in I_n$, $\dist(K_{\i}, K_{\i'})>\Delta_K\delta^n$; for every distinct $\j,\j'\in J_n$, $\dist(F_{\j}, F_{\j'})>L\delta^n$.
        \item For every $\i\in I_n$, there is a unique word $\j\in J_n$ such that $f(K_{\i})\subset F_{\j}$. 
    \end{enumerate}
\end{lemma}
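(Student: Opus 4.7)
The plan is to handle the three parts in order, each following readily from the definitions of $I_n, J_n$ and the strong separation condition (SSC). No serious obstacles are expected; the main point of care is tracking how the calibration $\delta_n = (L/\Delta_F)\delta^n$ propagates through the estimates and ties parts (2) and (3) together.

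For part (1), the upper bound $r_{\i} \leq \delta^n$ for $\i \in I_n$ is immediate from the definition of $I_n$, and the lower bound $r_{\i} > \underline{r}\delta^n$ follows by writing $r_{\i} = r_{\i^-}\cdot r_{i_{|\i|}}$ and using $r_{\i^-} > \delta^n$ together with $r_{i_{|\i|}} \geq \underline{r}$. The analogous bounds on the $F$ side are identical with $\lambda, \delta_n$ in place of $r, \delta^n$. For the cardinality estimate, I would invoke the fact (a consequence of Moran's theorem under SSC) that $\{K_{\i}\}_{\i \in I_n}$ partitions $K$ up to $\h^s$-null sets, giving $\sum_{\i \in I_n} r_{\i}^s = 1$; combined with the two-sided bound $\underline{r}^s\delta^{ns} < r_{\i}^s \leq \delta^{ns}$, this forces $\delta^{-ns} \leq \#I_n < \underline{r}^{-s}\delta^{-ns}$, i.e., $\#I_n \approx \delta^{-ns}$. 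The same estimate for $\#J_n$ follows in the same way.

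For part (2), the approach is to analyze the longest common prefix. Given distinct $\i, \i' \in I_n$, set $\omega := \i \wedge \i'$; this is a proper prefix of both, hence a prefix of $\i^-$, so $r_{\omega} \geq r_{\i^-} > \delta^n$. Since $K_{\i}$ and $K_{\i'}$ sit inside $\varphi_{\omega}$-images of two distinct top-level cells of $K$, the SSC gives
\[
    \dist(K_{\i}, K_{\i'}) \geq r_{\omega}\Delta_K > \Delta_K\delta^n.
\]
The same argument on the $F$ side yields $\dist(F_{\j}, F_{\j'}) \geq \lambda_{\j\wedge\j'}\Delta_F > \delta_n\Delta_F = L\delta^n$, which is exactly the claimed form; here is where the choice of $\delta_n$ becomes visible.

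Part (3) is where the calibration pays off. For the existence of $\j$, I would combine the Lipschitz diameter bound $\diam(f(K_{\i})) \leq L r_{\i} \leq L\delta^n$ with the observation that $F$ is partitioned disjointly (by SSC) into $\{F_{\j}\}_{\j \in J_n}$. Any point $y \in f(K_{\i})$ lies in exactly one such $F_{\j}$; if a second point $y' \in f(K_{\i})$ lay in a different $F_{\j'}$, part (2) would force $|y - y'| > L\delta^n$, contradicting the diameter bound. Uniqueness of $\j$ is then immediate from $F_{\j} \cap F_{\j'} = \varnothing$ for distinct $\j, \j' \in J_n$. The only subtle point throughout is recognizing that $\delta_n = (L/\Delta_F)\delta^n$ has been chosen precisely so that the separation threshold in (2) strictly exceeds the image diameter $L\delta^n$, which is exactly what makes the pigeonhole argument in (3) go through.
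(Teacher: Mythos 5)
Your proof is correct and takes essentially the same approach as the paper: part (1) from the defining inequalities plus $\sum_{\i\in I_n}r_{\i}^s=\sum_{\j\in J_n}\lambda_{\j}^s=1$, part (2) via the longest common prefix and the separation constants $\Delta_K,\Delta_F$, and part (3) by comparing $\diam(f(K_{\i}))\leq L\delta^n$ with the separation from part (2). Your version merely spells out a few steps the paper leaves implicit (the antichain structure of $I_n$ and the pigeonhole step in (3)).
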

\begin{proof}
    The first statement follows directly from~\eqref{eq:originalinjn} and the fact that $\sum_{\i\in I_n}r_{\i}^s=\sum_{\j\in J_n}\lambda_{\j}^s=1$.
    For (2), note that 
    \[
        \dist(K_{\i}, K_{\i'}) \geq r_{\i\wedge \i'}\Delta_K \geq r_{\i^-}\Delta_K > \Delta_K\delta^n;
    \]
    similarly,
    \[
        \dist(F_{\j}, F_{\j'}) \geq  \lambda_{\j^-}\Delta_F> \delta_n\Delta_F  = L\delta^n.
    \]
    For (3), just note that when $\i\in I_n$,
    \begin{align*}
        \diam(f(K_{\i})) \leq L\diam(K_{\i}) = Lr_{\i} \leq L\delta^n < \min_{\j\neq\j'\in J_n}\dist(F_{\j}, F_{\j'}),
    \end{align*}
    where the last inequality comes from (2). 
\end{proof}

\begin{lemma}\label{lem:densitythm}
    For $n\geq 1$ and $y\in F$, let $\j_n(y)$ be the unique word in $J_{n}$ such that $y\in F_{\j_n(y)}$. Then 
    \[
        \lim_{n\to\infty} \frac{\h^s(f(K)\cap F_{\j_n(y)})}{\h^s(F_{\j_n(y)})} = 1, \quad \h^s\text{-a.e. } y\in f(K).
    \]
\end{lemma}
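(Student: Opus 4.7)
The plan is to reduce the claim to the Lebesgue differentiation theorem for Radon measures, by showing that the cylinders $F_{\j_n(y)}$ are geometrically comparable to Euclidean balls centered at $y$.

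First I would establish the sandwich inclusion
\[
F \cap B(y, L\delta^n) \;\subset\; F_{\j_n(y)} \;\subset\; B(y, \delta_n).
\]
The right-hand inclusion is immediate from $\diam F_{\j_n(y)} = \lambda_{\j_n(y)} \leq \delta_n$. For the left-hand inclusion, Lemma~\ref{lem:factsinq2}(2) gives $\dist(F_\j, F_{\j'}) > L\delta^n$ for any two distinct $\j, \j' \in J_n$, so any point of $F$ lying within Euclidean distance $L\delta^n$ of $y$ must belong to the same level-$n$ cylinder as $y$.

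Next I would invoke the Ahlfors $s$-regularity of $\mu := \h^s|_F$, namely $\mu(B(y, r)) \approx r^s$ uniformly for $y \in F$ and $r \in (0, 1]$, which is a well-known consequence of the SSC. Combined with the sandwich inclusion this yields $\mu(F_{\j_n(y)}) \approx \mu(B(y, \delta_n)) \approx \delta^{ns}$. Setting $A := f(K) \subset F$, we then estimate
\[
1 - \frac{\mu(A \cap F_{\j_n(y)})}{\mu(F_{\j_n(y)})} = \frac{\mu(F_{\j_n(y)} \setminus A)}{\mu(F_{\j_n(y)})} \leq \frac{\mu((F \setminus A) \cap B(y, \delta_n))}{\mu(F_{\j_n(y)})} \lesssim \frac{\mu((F \setminus A) \cap B(y, \delta_n))}{\mu(B(y, \delta_n))}.
\]

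Finally, the Lebesgue differentiation theorem for Radon measures ensures that the right-hand ratio tends to $\mathbf{1}_{F \setminus A}(y)$ for $\mu$-a.e.\ $y \in F$, and in particular vanishes for $\mu$-a.e.\ $y \in A$. This forces the original density ratio to converge to $1$ for $\mu$-a.e.\ $y \in A = f(K)$, as required. The main point that needs a careful citation is the uniform Ahlfors regularity of $\h^s|_F$; beyond this I do not anticipate a serious obstacle, since the argument is a routine ball-vs-cylinder comparison once the sandwich inclusion is in hand.
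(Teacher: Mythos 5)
Your proposal is correct and follows essentially the same route as the paper: both arguments combine the density theorem for Radon measures with a comparison between $\h^s(F\cap B(y,\rho))$ and $\h^s(F_{\j_n(y)})$ at the scale $\rho\approx\delta_n$. The only cosmetic differences are that you apply the density theorem to the complement $F\setminus f(K)$ rather than to $f(K)$ itself, and you invoke the Ahlfors $s$-regularity of $\h^s|_F$ as a known fact where the paper derives the needed bound $\h^s(F\cap B(y,\rho))\lesssim\h^s(F_{\j_n(y)})$ by a direct packing count using the separation of level-$n$ cells.
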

\begin{proof}
    Since the restriction of $\h^s$ on $F$ is a Radon measure, by a standard density theorem (see~\cite[Corollary 2.14]{Mat85}), 
    \begin{equation}\label{eq:densitythm-1}
        \lim_{\rho\to 0} \frac{\h^s(f(K)\cap B(y,\rho))}{\h^s(F\cap B(y,\rho))} =\lim_{\rho\to 0} \frac{\h^s(F\cap f(K)\cap B(y,\rho))}{\h^s(F\cap B(y,\rho))} = 1
    \end{equation}
    for $\h^s$-a.e. $y\in f(K)$, where the first equality follows from $f(K)\subset F$. 
    Fix any $y$ satisfying \eqref{eq:densitythm-1}. For any $0<\varepsilon<1$, we have for all small $\rho$ that
    \begin{equation}\label{eq:density1}
        \h^s(f(K)\cap B(y,\rho)) \geq (1-\varepsilon)\h^s(F\cap B(y,\rho)).
    \end{equation}

    Pick the smallest $n$ so that $\delta_n\leq \rho$. Since $\j_n(y)\in J_{n}$, $\diam(F_{\j_n(y)})=\lambda_{\j_n(y)}\leq\delta_n$ and thus $F_{\j_n(y)}\subset B(y,\rho)$. On the other hand, by Lemma~\ref{lem:factsinq2}(1) and the minimality of $n$, we have $\rho/C_1\leq \delta_n\leq C_1\rho$ for some constant $C_1>1$. 
    By Lemma~\ref{lem:factsinq2}(2), one can find a constant $C_2>0$ such that for all $n\geq 1$ and $\j\in J_n$, there exists $x_{\j}\in F_{\j}$ such that $\{B(x_{\j},C_2 \rho)\}_{\j\in J_n}$ are mutually disjoint. Thus
    \begin{align*}
    	\#\big\{\j\in J_n: F_{\j}\cap B(y,\rho)\not=\emptyset\big\} & \leq 
    	\#\big\{\j\in J_n: B(x_{\j},C_2\rho) \subset B\big(y,(1+C_1+C_2)\rho\big)\big\} \\
    	&\leq \Big(\frac{1+C_1+C_2}{C_2}\Big)^d.
    \end{align*}
    As a result, there is a constant $C>0$ (independent of $y, n, \varepsilon$) such that
    \begin{equation}\label{eq:massofballandfjn}
        \h^s(F\cap B(y,\rho)) \leq \sum_{\j\in J_{n}: F_{\j}\cap B(y,\rho)\neq\varnothing} \h^s(F_{\j}) \leq C\h^s(F_{\j_n(y)}).
    \end{equation}
    Recalling that $f(K)\subset F$ and $F_{\j_n(y)}\subset B(y,\rho)\cap F$,
    we have
    \begin{align*}
        \h^s(f(K)\cap B(y,\rho)) &\leq \h^s(F\cap B(y,\rho))-\h^s((F\cap B(y,\rho))\setminus f(K)) \\
        &\leq \h^s(F\cap B(y,\rho)) - \h^s(F_{\j_n(y)}\setminus f(K)) \\
        &= \h^s(F\cap B(y,\rho)) - \frac{\h^s(F_{\j_n(y)}\setminus f(K))}{\h^s(F_{\j_n(y)})}\cdot \h^s(F_{\j_n(y)}) \\
        &\leq \Big( 1-C^{-1}\cdot\frac{\h^s(F_{\j_n(y)}\setminus f(K))}{\h^s(F_{\j_n(y)})} \Big)\h^s(F\cap B(y,\rho)),
    \end{align*}
    where the last inequality follows from~\eqref{eq:massofballandfjn}. By~\eqref{eq:density1}, we immediately obtain 
    \[
        \frac{\h^s(F_{\j_n(y)}\setminus f(K))}{\h^s(F_{\j_n(y)})}\leq C\varepsilon.
    \]
    Since $\varepsilon$ is arbitrary, the proof is finished.
\end{proof}

To obtain an effective rescaling, we also need to determine a local region within $F$ where the map $f$ is not overly concentrated.

\begin{proposition}\label{prop:countingprop}
    There is an integer $Q\geq 1$ satisfying the following property. For any $0<\varepsilon<1$, we can find some $n$ and $\j\in J_n$ such that $\frac{\h^s(f(K) \cap F_{\j})}{\h^s(F_{\j})}>1-\varepsilon$ and 
    \[
        \#\{\i\in I_n: f(K_{\i})\subset F_{\j}\} = \#\{\i\in I_n: f(K_{\i})\cap F_{\j}\neq\varnothing\} \leq Q.
    \]
\end{proposition}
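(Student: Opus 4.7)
The plan is to fix the integer $Q$ in advance, based on $\h^s(f(K))$ and the implicit constants in Lemma~\ref{lem:factsinq2}, and then derive the conclusion for each $\varepsilon$ by a counting/measure argument. For each pair $n\geq 1$ and $\j\in J_n$, write $N_n(\j):=\#\{\i\in I_n: f(K_{\i})\subset F_{\j}\}$. Note that by Lemma~\ref{lem:factsinq2}(3), this number coincides with $\#\{\i\in I_n: f(K_{\i})\cap F_{\j}\neq\varnothing\}$, so only the size of $N_n(\j)$ needs to be controlled.

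The first step is a uniform estimate on the total $\h^s$-mass of cells having many preimages. Since each $\i\in I_n$ is routed to a unique $\j\in J_n$, we have $\sum_{\j\in J_n} N_n(\j) = \#I_n$, and a pigeonhole argument gives
\[
    \#\{\j\in J_n: N_n(\j)>Q\} \leq \frac{\#I_n}{Q}.
\]
Combining this with $\h^s(F_{\j}) = \lambda_{\j}^s\h^s(F) \leq (L/\Delta_F)^s\h^s(F)\delta^{ns}$ and the comparability $\#I_n\approx \delta^{-ns}$ from Lemma~\ref{lem:factsinq2}(1) yields
\[
    \sum_{\j\in J_n: N_n(\j)>Q} \h^s(F_{\j}) \leq \frac{C}{Q},
\]
with a constant $C$ that is independent of $n$ and $Q$.

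Next I would exploit Lemma~\ref{lem:densitythm} to pick up a substantial portion of $f(K)$ from cells with high relative density. Set $G_n(\varepsilon):=\{\j\in J_n: \h^s(f(K)\cap F_{\j})> (1-\varepsilon)\h^s(F_{\j})\}$. For $\h^s$-a.e.\ $y\in f(K)$, Lemma~\ref{lem:densitythm} ensures $\j_n(y)\in G_n(\varepsilon)$ for all sufficiently large $n$; dominated convergence (applied to the restriction of $\h^s$ on $f(K)$, which is finite) therefore gives
\[
    \sum_{\j\in G_n(\varepsilon)} \h^s(f(K)\cap F_{\j}) \;\xrightarrow{n\to\infty}\; \h^s(f(K)).
\]

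Finally I would fix $Q$ once and for all so that $C/Q < \h^s(f(K))/2$, which is possible because $\h^s(f(K))>0$. Given any $\varepsilon\in(0,1)$, choose $n$ so large that the previous limit gives $\sum_{\j\in G_n(\varepsilon)} \h^s(f(K)\cap F_{\j}) \geq (3/4)\h^s(f(K))$. If no $\j\in G_n(\varepsilon)$ satisfied $N_n(\j)\leq Q$, then $G_n(\varepsilon)\subset\{\j: N_n(\j)>Q\}$ and the first step would force
\[
    \frac{3}{4}\h^s(f(K)) \leq \sum_{\j\in G_n(\varepsilon)} \h^s(f(K)\cap F_{\j}) \leq \frac{C}{Q} < \frac{\h^s(f(K))}{2},
\]
a contradiction. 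The main subtlety is the order of quantifiers: $Q$ has to be declared before $\varepsilon$, which works because the estimate $C/Q$ is uniform in $n$ and $\varepsilon$; every other step is a routine combination of the SSC-driven bounds from Lemma~\ref{lem:factsinq2} with the density theorem of Lemma~\ref{lem:densitythm}.
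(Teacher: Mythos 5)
Your proof is correct, and it takes a genuinely different and in fact more streamlined route than the paper's. The paper first extracts a Vitali-type disjoint family of high-density cells $\{F_{\j_{n_t}}\}$ covering a definite fraction of $f(K)$, then descends to a much finer common level $P\geq 2\max_t n_t$, shows that the good subcells at level $P$ number $\gtrsim\delta_P^{-s}$, and finally applies pigeonhole against $\#I_P\approx\delta^{-Ps}$; as a by-product its selected cell only has density $>1-C\varepsilon$ for a fixed constant $C>\underline{\lambda}^{-2s}$ (harmless, since $\varepsilon$ is arbitrary, but less clean). You instead work at a single scale $n$: the identity $\sum_{\j\in J_n}N_n(\j)=\#I_n$ (valid by Lemma~\ref{lem:factsinq2}(3)) plus Chebyshev/pigeonhole bounds the total $\h^s$-mass of the ``crowded'' cells by $C/Q$ uniformly in $n$, while Lemma~\ref{lem:densitythm} together with dominated convergence (applied to the indicators $\mathbf{1}[\j_n(y)\in G_n(\varepsilon)]$, which are dominated by $1$ on the finite measure space $(f(K),\h^s)$) shows the high-density cells eventually capture almost all of $\h^s(f(K))$; choosing $Q$ with $C/Q<\h^s(f(K))/2$ forces the two families to intersect. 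All the ingredients check out: the uniqueness of the routing $\i\mapsto\j$ and the bounds $\#I_n\approx\delta^{-ns}$, $\h^s(F_\j)\lesssim\delta^{ns}$ are exactly Lemma~\ref{lem:factsinq2}, and the quantifier order is handled correctly since $C$ depends only on $L,\Delta_F,\delta,\h^s(F)$ and the implicit constants, not on $n$, $Q$ or $\varepsilon$. Your version avoids the Vitali covering theorem and the two-scale bookkeeping entirely and yields the sharper density $1-\varepsilon$ directly; the paper's argument gives slightly more localized information (a positive proportion of good cells inside each selected $F_{\j_{n_t}}$), but that extra information is not used.
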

\begin{proof}
    The first equality is immediately from Lemma~\ref{lem:factsinq2}(3) as the two sets are identical. It remains to verify the cardinality.   
    Let $0<\varepsilon<1$. By Lemma~\ref{lem:densitythm}, there exists $E\subset f(K)$ with $\h^s(E)=\h^s(f(K))$ such that for all $y\in E$, 
    \[
        \h^s(f(K)\cap F_{\j_n(y)}) > (1-\varepsilon)\h^s(F_{\j_n(y)})
    \]
    when $n$ is large enough. This naturally gives us a Vitali covering of $E$ consisting of specific cells of $F$. By a version of Vitali covering theorem (see~\cite[Theorem 1.10]{Fal86}), we can find $\{y_t\}_{t=1}^\infty\subset E$ and $\{n_t\}_{t=1}^\infty$ such that $\h^s(E\setminus\bigcup_{t=1}^\infty F_{\j_{n_t}(y_t)})=0$, where $\{F_{\j_{n_t(y_t)}}\}_t$ is a disjoint family and 
    \begin{equation}\label{eq:counting1}
        \h^s(f(K)\cap F_{\j_{n_t}(y_t)}) > (1-\varepsilon)\h^s(F_{\j_{n_t}(y_t)}), \quad \forall t\geq 1.
    \end{equation}
    For convenience, below we abbreviate $\j_{n_t}:=\j_{n_t}(y_t)$.

    Pick any $p\geq 1$ such that $\h^s( \bigcup_{t=1}^p F_{\j_{n_t}} ) \geq \h^s(E)/2=\h^s(f(K))/2$. So
    \begin{equation}\label{eq:choiceofp}
        \sum_{t=1}^p\delta_{n_t}^s \geq  \h^s(F)^{-1} \sum_{t=1}^p \lambda_{n_t}^s\h^s(F) =  \h^s(F)^{-1} \sum_{t=1}^p \h^s(F_{\j_{n_t}}) \geq \frac{\h^s(f(K))}{2\h^s(F)}.
    \end{equation}
    Pick an integer $P\geq 2\max\{n_1,\ldots,n_p\}$ and a constant $C>\underline{\lambda}^{-2s}$. Set 
    \begin{equation}\label{eq:defofmt}
        \mathcal{M}_t := \Big\{ \eta\in J_{P}: F_\eta\subset F_{\j_{n_t}}, \frac{\h^s(f(K)\cap F_\eta)}{\h^s(F_\eta)}>1-C\varepsilon \Big\}, \quad 1\leq t\leq p
    \end{equation}
    and $\mathcal{M}:=\bigcup_{t=1}^p \mathcal{M}_t$. Since $F_{\j_{n_1}},\ldots,F_{\j_{n_p}}$ are disjoint, $\mathcal{M}_1,\ldots,\mathcal{M}_p$ are disjoint. We claim that $\#\mathcal{M}\gtrsim \delta_P^{-s}$.

    To this end, fix any $1\leq t\leq p$. From $\j_{n_t}\in J_{n_t}$ and \eqref{eq:counting1}, 
    \begin{equation}\label{eq:countingprop-new1}
	  \varepsilon\delta_{n_t}^s\h^s(F) \geq \varepsilon\lambda_{\j_{n_t}}^s\h^s(F)= \varepsilon\h^s(F_{\j_{n_t}})> \h^s(F_{\j_{n_t}}\setminus f(K)).
    \end{equation}
    On the other hand, by~\eqref{eq:defofmt}, for any $\eta\in J_P\setminus \mathcal{M}_t$ with $F_\eta\subset F_{\j_{n_t}}$, we have 
    \[
       \h^s(F_\eta\setminus f(K))\geq C\varepsilon \h^s(F_\eta)\geq C\varepsilon \cdot (\underline{\lambda}\delta_P)^s\h^s(F).
    \]
    So
    \begin{align*}
      \h^s(F_{\j_{n_t}}\setminus f(K))&\geq \sum_{\eta\in J_{P}\setminus\mathcal{M}_t: F_\eta\subset F_{\j_{n_t}}} \h^s(F_\eta\setminus f(K)) \\
      	&\geq \#\{\eta\in J_{P}\setminus\mathcal{M}_t: F_\eta\subset F_{\j_{n_t}}\}\cdot C\varepsilon \cdot (\underline{\lambda}\delta_P)^s\h^s(F).
    \end{align*}
    Combining this with \eqref{eq:countingprop-new1}, we see that
    \[
        \#\{\eta\in J_{P}\setminus\mathcal{M}_t: F_\eta\subset F_{\j_{n_t}}\} \leq C^{-1}\underline{\lambda}^{-s}\delta_{n_t}^s\delta_P^{-s}
    \]
    and hence
    \begin{align}
        \#\mathcal{M}_t &= \#\{\eta\in J_{P}: F_\eta\subset F_{\j_{n_t}}\} - \#\{\eta\in J_{P}\setminus\mathcal{M}_t: F_\eta\subset F_{\j_{n_t}}\} \notag \\
        &\geq \frac{\h^s(F_{\j_{n_t}})}{\max_{\eta\in J_{P}}\h^s(F_{\eta})} - C^{-1}\underline{\lambda}^{-s}\delta_{n_t}^s\delta_P^{-s} \notag  \\ 
        &\geq \frac{(\underline{\lambda}\delta_{n_t})^s}{\delta_P^s} - C^{-1}\underline{\lambda}^{-s}\delta_{n_t}^s\delta_P^{-s} \notag \\
        &= (\underline{\lambda}^s-C^{-1}\underline{\lambda}^{-s})\delta_{n_t}^s\delta_P^{-s} =: C_1\delta_{n_t}^s\delta_P^{-s}, \label{eq:counting2}
    \end{align}
    where $C_1>0$ since $C>\underline{\lambda}^{-2s}$. Letting $C_2:=\frac{C_1\h^s(f(K))}{2\h^s(F)}$, we know from the disjointness of $\{\mathcal{M}_t\}_{t=1}^p$, the estimates~\eqref{eq:counting2} and~\eqref{eq:choiceofp} that
    \begin{align*}
        \#\mathcal{M} = \sum_{t=1}^p \#\mathcal{M}_t \geq C_1\delta_{P}^{-s}\sum_{t=1}^p\delta_{n_t}^s \geq C_2\delta_P^{-s},
    \end{align*}
    as claimed.

    Lemma~\ref{lem:factsinq2}(3) also implies that $\{\{\i\in I_P: f(K_{\i})\cap F_{\eta}\neq\varnothing\}\}_{\eta\in\mathcal{M}}$ is a disjoint family. By the pigeonhole principle, there must exist a word $\eta_0\in\mathcal{M}$ such that 
    \[
        \#\{\i\in I_P: f(K_{\i})\cap F_{\eta_0}\neq\varnothing\} \leq \frac{\#I_P}{\#\mathcal{M}}.
    \]
    By Lemma~\ref{lem:factsinq2}(1), $\#I_P\approx\delta^{-Ps}$. Hence
    \[
        \#\{\i\in I_P: f(K_{\i})\cap F_{\eta_0}\neq\varnothing\} \lesssim \frac{\delta^{-Ps}}{C_2\delta_P^{-s}} = \frac{\delta^{-Ps}}{C_2}\cdot \Big( \frac{L\delta^P}{\Delta_F} \Big)^s = \frac{L^s}{C_2\Delta_F^s},
    \]
    Combining this with \eqref{eq:defofmt}, we prove the proposition.
\end{proof}

%-------------------------------
\subsection{Proof of Theorem~\ref{thm:main1reduction}}

Now let us prove Theorem~\ref{thm:main1reduction}. Applying Proposition~\ref{prop:countingprop} to $\varepsilon=n^{-1}$, we find for all $n\geq 1$ an integer $k_n\geq 1$ and a word $\j_n\in J_{k_n}$ such that 
\begin{equation}\label{eq:thesequence}
    \h^s(F_{\j_n}\cap f(K)) > (1-n^{-1})\h^s(F_{\j_n}),
\end{equation}
whereas 
\begin{equation*}
    P_n:= \#\{\i\in I_{k_n}: f(K_{\i})\cap F_{\j_n}\neq\varnothing\} = \#\{\i\in I_{k_n}: f(K_{\i})\subset F_{\j_n}\}
\end{equation*}
are uniformly bounded. Passing to a subsequence if necessary, we may assume that $P_n\equiv P$ for some $P\in\mathbb{Z}^+$. Note that by Lemma~\ref{lem:factsinq2}(3), \eqref{eq:thesequence} becomes 
\begin{equation}\label{eq:originallimit}
    \h^s\Big( F_{\j_n} \cap \bigcup_{\i\in I_{k_n}: f(K_{\i})\subset F_{\j_n}}f(K_{\i}) \Big) > (1-n^{-1})\h^s(F_{\j_n}).
\end{equation}
For convenience, enumerate $\{\i\in I_{k_n}: f(K_{\i})\subset F_{\j_n}\}=:\{\i_{n,t}\}_{t=1}^P$.

With the notation, for every $n\geq 1$, $f$ sends each $K_{\i_{n,t}}$ into $F_{\j_n}=\psi_{\j_n}(F)$. This naturally generates a blow-up map 
\begin{align*}
    f_n: \bigcup_{t=1}^P \psi_{\j_n}^{-1}(K_{\i_{n,t}}) &\to F \\
    \psi_{\j_n}^{-1}(x) &\mapsto \psi_{\j_n}^{-1}f(x).
\end{align*}
Clearly, $\lip(f_n)\leq \lip(f)=L$ for all $n$. By~\eqref{eq:originallimit},
\begin{align}
    \lim_{n\to\infty} \frac{\h^s\big( F\cap f_n(\bigcup_{t=1}^P \psi_{\j_n}^{-1}(K_{\i_{n,t}})) \big)}{\h^s(F)} &= \lim_{n\to\infty} \frac{\h^s \big( F \cap \bigcup_{t=1}^P \psi_{\j_n}^{-1}f(K_{\i_{n,t}}) \big)}{\h^s(F)} \notag \\
    &= \lim_{n\to\infty} \frac{\h^s\big( F_{\j_n}\cap  \bigcup_{t=1}^P f(K_{\i_{n,t}}) \big) }{\h^s(F_{\j_n})}=1. \label{eq:originallimit2}
\end{align}

For $1\leq t\leq P$, write $\psi_{\j_n}^{-1}(K_{\i_{n,t}}) =: c_{n,t}O_{n,t}K+b_{n,t}$, where $c_{n,t}:=\lambda_{\j_n}^{-1}r_{\i_{n,t}}$. Recall that $\j_n\in J_{k_n}$ and $\i_{n,t}\in I_{k_n}$. By Lemma~\ref{lem:factsinq2}, it is easy to verify that $c_{n,t}\in [ \frac{\underline{r}\Delta_F}{L}, \frac{\Delta_F}{\underline{\lambda} L} ]$ and
\begin{align*}
    \min_{t\neq t'} \dist(\psi_{\j_n}^{-1}(K_{\i_{n,t}}),\psi_{\j_n}^{-1}(K_{\i_{n,t'}})) &= \lambda_{\j_n}^{-1}\min_{t\neq t'} \dist(K_{\i_{n,t}}, K_{\i_{n,t'}}) \\
    &> \lambda_{\j_n}^{-1}\Delta_K\delta^{k_n} \\
    &\geq L^{-1}\Delta_K\Delta_F.
\end{align*}
Applying Lemma~\ref{lem:takinglimits}, we can find a Lipschitz map $g: \bigcup_{t=1}^P (c_tO_tK+b_t) \to F$, where $c_t\in [ \frac{\underline{r}\Delta_F}{L}, \frac{\Delta_F}{\underline{\lambda} L} ]$, $O_t$ is a $d\times d$ orthogonal matrix and $b_t\in\R^d$, such that $\lip(g)\leq L$ and (recalling~\eqref{eq:originallimit2})
\[
    \h^s\Big( g\Big( \bigcup_{t=1}^P (c_tO_tK+b_t) \Big) \Big) \geq \lim_{n\to\infty} \h^s\Big( F\cap f_n\Big( \bigcup_{t=1}^P \psi_{\j_n}^{-1}(K_{\i_{n,t}}) \Big) \Big) = \h^s(F).
\]
Therefore, the image $g( \bigcup_{t=1}^P (c_tO_tK+b_t) )$ is a closed subset of $F$ with full measure. It follows that $g( \bigcup_{t=1}^P (c_tO_tK+b_t) )=F$.

By Baire's theorem, there is some $t$ such that $g(c_tO_tK+b_t)$ contains an interior part of $F$, say $F_{\j}\subset g(c_tO_tK+b_t)$. Then, similarly as in the proof of Lemma~\ref{lem:lipschitzembedding}, it is easy to construct a Lipschitz map sending $c_tO_tK+b_t$ onto $F_{\j}$. Since $c_tO_tK+b_t$ and $F_{\j}$ are nothing but scaled copies of $K$ and $F$, respectively, this establishes Theorem~\ref{thm:main1reduction}.

%-----------------------------
\section{Lipschitz surjections with measure-preserving properties}

In this section, we fix a Lipschitz embedding $f:K\to F$ with $\h^s(f(K))>0$. By Theorem~\ref{thm:main1reduction}, we may assume $f$ to be surjective. Again write $L:=\lip(f)$ for simplicity.  A key insight, noted in~\cite{CP88, XR08}, asserts that any bi-Lipschitz map between $K$ and $F$ (if there is any) should preserve the $s$-dimensional Hausdorff measure in some local area; see~\cite[Lemma~2.4]{RRW12} for a short proof. While the given map $f$ may fall short of this ideal, we can nevertheless produce an alternative Lipschitz surjection from $K$ to $F$ that enjoys some nice measure linearity.

Let $\mathcal{K}:=\mathcal{K}(P,\gamma,u,v)$ be the collection as in Lemma~\ref{lem:takinglimits} with $[u,v]:=[\frac{\underline{r}\Delta_F}{L}, \frac{\Delta_F}{\underline{\lambda}L}]$, $P:=\lfloor \frac{L^s}{\underline{r}^s\Delta_F^s} \rfloor$ and $\gamma:=L^{-1}\Delta_F\Delta_K$.

\begin{lemma}
    The collection 
    \[
        \{E\in\mathcal{K}: \exists\text{ a Lipschitz} \text{  surjection $g:E\to F$ with $\lip(g)\leq L$} \}
    \]
    is non-empty.
\end{lemma}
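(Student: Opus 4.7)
My plan is to replay the construction from the proof of Theorem~\ref{thm:main1reduction}, stopping one step earlier than the appeal to Baire's theorem, and to observe that the limit set produced there already witnesses the non-emptiness claim.

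Fix the Lipschitz surjection $f:K\to F$ with $\lip(f)=L$ (guaranteed by the standing assumption and Theorem~\ref{thm:main1reduction}). For each $n\geq 1$, Proposition~\ref{prop:countingprop} applied with $\varepsilon=n^{-1}$ furnishes an integer $k_n$, a word $\j_n\in J_{k_n}$, and an index family $\{\i_{n,t}\}_{t=1}^{P_n}\subset I_{k_n}$, with $P_n$ uniformly bounded, such that each $f(K_{\i_{n,t}})\subset F_{\j_n}$ and the union $\bigcup_t f(K_{\i_{n,t}})$ fills $F_{\j_n}$ in $\h^s$-measure to within $1-n^{-1}$. Form the rescaled pieces $c_{n,t}O_{n,t}K+b_{n,t}:=\psi_{\j_n}^{-1}(K_{\i_{n,t}})$ together with the blow-up $f_n:=\psi_{\j_n}^{-1}\circ f\circ\psi_{\j_n}$. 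The direct computations carried out just after Lemma~\ref{lem:factsinq2} yield $c_{n,t}\in[u,v]$, pairwise distances $\geq\gamma$, $\lip(f_n)\leq L$, and $\h^s(f_n(\text{domain}_n))\to\h^s(F)$ in view of~\eqref{eq:originallimit2}. An $s$-measure comparison---leveraging $\sum_t r_{\i_{n,t}}^s\leq 1$, the Lipschitz bound $L$ and the separation $\Delta_F$---then secures the uniform cardinality bound $P_n\leq P$, so each $E_n:=\bigcup_t(c_{n,t}O_{n,t}K+b_{n,t})$ lies in $\mathcal{K}$.

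I then invoke Lemma~\ref{lem:takinglimits} on the sequence $\{f_n:E_n\to F\}$ to extract, along a subsequence, a limit set $E\in\mathcal{K}$ together with a Lipschitz map $g:E\to F$ with $\lip(g)\leq L$ and $\h^s(g(E))\geq\limsup_j\h^s(f_{n_j}(E_{n_j}))=\h^s(F)$. Since $g(E)\subset F$, equality holds. To upgrade full $\h^s$-measure to actual surjectivity, note that $g(E)$ is closed in $F$ (the continuous image of the compact set $E$) and that $\h^s|_F$ has full topological support: every nonempty relatively open subset of the dust-like self-similar set $F$ contains some cell $F_{\j}$, which has positive $\h^s$-measure. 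Hence $F\setminus g(E)=\varnothing$, and $g(E)=F$ exhibits the required element of the collection.

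The main obstacle I anticipate is verifying the explicit cardinality bound $P_n\leq P=\lfloor L^s/(\underline{r}^s\Delta_F^s)\rfloor$ rather than a merely finite bound; once this is secured, the rest of the argument amounts to a clean application of Lemma~\ref{lem:takinglimits} combined with the support property of $\h^s$ on the dust-like target $F$.
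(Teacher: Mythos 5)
There is a genuine gap, and it sits exactly where you flag ``the main obstacle'': your construction does not produce an element of the \emph{specific} class $\mathcal{K}=\mathcal{K}(P,\gamma,u,v)$ with $P=\lfloor L^s/(\underline{r}^s\Delta_F^s)\rfloor$ fixed at the start of Section 4. Replaying the proof of Theorem~\ref{thm:main1reduction}, the only uniform bound available on $P_n=\#\{\i\in I_{k_n}: f(K_{\i})\subset F_{\j_n}\}$ is the constant $Q$ from Proposition~\ref{prop:countingprop}, which the pigeonhole argument there bounds by (a constant times) $L^s/(C_2\Delta_F^s)$ with $C_2<\tfrac12\underline{\lambda}^s$; this is strictly larger than $L^s/(\underline{r}^s\Delta_F^s)$, and nothing in that construction forces $P_n\leq P$. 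So your limit set $E$ lands in $\mathcal{K}(Q,\gamma,u,v)$ but not necessarily in $\mathcal{K}$. Your proposed repair --- an ``$s$-measure comparison leveraging $\sum_t r_{\i_{n,t}}^s\leq 1$'' --- does not close this: that inequality only gives $P_n\leq(\underline{r}\delta^{k_n})^{-s}$, which blows up with $n$. What is actually needed is a bound of the form $\h^s(f^{-1}(F_{\j_n}))\leq\frac{\h^s(K)}{\h^s(F)}\,\h^s(F_{\j_n})$, and the cells supplied by Proposition~\ref{prop:countingprop} are chosen for a density property that gives no such control on the preimage mass.

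The paper's proof avoids the limiting construction altogether and closes precisely this point. Since $f$ may be assumed surjective (Theorem~\ref{thm:main1reduction}), one writes $\frac{\h^s(K)}{\h^s(F)}=\sum_{\j\in J_1}\frac{\h^s(F_{\j})}{\h^s(F)}\cdot\frac{\h^s(f^{-1}(F_{\j}))}{\h^s(F_{\j})}$ and picks $\j\in J_1$ \emph{minimizing} the ratio $\h^s(f^{-1}(F_{\j}))/\h^s(F_{\j})$; the minimum is at most this weighted average $\h^s(K)/\h^s(F)$, and that mass bound converts (via $r_{\i}>\underline{r}\delta$ for $\i\in I_1$ and $\lambda_{\j}\leq L\delta/\Delta_F$) into exactly $\#\mathcal{I}\leq L^s/(\underline{r}^s\Delta_F^s)$, i.e.\ at most $P$ pieces. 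A single blow-up of $f$ restricted to $f^{-1}(F_{\j})$ is then already the required surjection --- no Arzel\`a--Ascoli or Baire step is needed. The same choice also delivers $\h^s(\psi_{\j}^{-1}f^{-1}(F_{\j}))\leq\h^s(K)$, which Corollary~\ref{lem:infoflcellstof} relies on later and which your route would not provide. To salvage your approach you would have to either enlarge $P$ in the definition of $\mathcal{K}$ (changing the object the rest of Section 4 manipulates) or replace the cells from Proposition~\ref{prop:countingprop} by minimizing cells as above; the latter is essentially the paper's argument.
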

\begin{proof}
    Let $c_1:=\min_{\j\in J_1}\frac{\h^s(f^{-1}(F_{\j}))}{\h^s(F_{\j})}$. Since $f(K)=F$, 
    \begin{equation}\label{eq:boundofc1}
        \frac{\h^s(K)}{\h^s(F)} = \sum_{\eta\in J_1} \frac{\h^s(F_{\eta})}{\h^s(F)}\cdot\frac{\h^s(f^{-1}(F_{\eta}))}{\h^s(F_{\eta})} \geq c_1\sum_{\eta\in J_1} \frac{\h^s(F_{\eta})}{\h^s(F)} = c_1.
    \end{equation}
    Pick $\j\in J_1$ with $\frac{\h^s(f^{-1}(F_{\j}))}{\h^s(F_{\j})}=c_1$. By Lemma~\ref{lem:factsinq2}(3), $f^{-1}(F_{\j})$ is a finite union $\bigcup_{\i\in\mathcal{I}}K_{\i}$ for some $\mathcal{I}\subset I_1$. Then we can blow up $f: \bigcup_{\i\in\mathcal{I}}K_{\i}\to F_{\j}$ and get a Lipschitz surjection 
    \begin{align*}
        g: \bigcup_{\i\in\mathcal{I}} \psi_{\j}^{-1}K_{\i} &\to F \\
        \psi_{\j}^{-1}(x) &\mapsto \psi_{\j}^{-1}f(x)
    \end{align*}
    with $\lip(g)\leq\lip(f)=L$.
    
    It remains to check that $\bigcup_{\i\in\mathcal{I}} \psi_{\j}^{-1}(K_{\i})\in\mathcal{K}$. By Lemma~\ref{lem:factsinq2}, we have 
    \[
        \frac{\underline{r}\Delta_F}{L} \leq \lambda_{\j}^{-1}r_{\i} \leq \frac{\Delta_F}{\underline{\lambda}L}, \quad \forall \i\in\mathcal{I}
    \]
    and 
    \begin{align*}
        \dist(\psi_{\j}^{-1}(K_{\i}),\psi_{\j}^{-1}(K_{\i'})) = \lambda_{\j}^{-1}\dist(K_{\i}, K_{\i'}) \geq L^{-1}\Delta_F\Delta_K =\gamma, \quad \forall\i\neq\i'\in\mathcal{I}.
    \end{align*}
	Moreover, since 
	\begin{align*}
		\#\mathcal{I}\cdot (\underline{r}\delta)^s\h^s(K) &\leq \#\mathcal{I}\cdot\min_{\i\in I_1} \h^s(K_{\i}) &&\text{(by Lemma~\ref{lem:factsinq2}(1))}\\
		&\leq \h^s\Big( \bigcup_{\i\in\mathcal{I}}K_{\i} \Big) \\
		&= \h^s(f^{-1}(F_{\j})) \\
		&= c_1\h^s(F_{\j}) \\
		&\leq \frac{\h^s(K)}{\h^s(F)}\cdot\Big( \frac{L\delta}{\Delta_F} \Big)^s\h^s(F), &&\text{(by~\eqref{eq:boundofc1} and Lemma~\ref{lem:factsinq2}(1))}
	\end{align*}
	we have $\#\mathcal{I} \leq \lfloor\frac{L^s}{\underline{r}^s\Delta_F^s}\rfloor = P$. So $\bigcup_{\i\in\mathcal{I}}\psi_{\j}^{-1} (K_{\i})\in\mathcal{K}$. 
   
    We remark that 
    \begin{equation}\label{eq:thefirstboundedmass}
        \frac{\h^s(\bigcup_{\i\in\mathcal{I}}\psi_{\j}^{-1} (K_{\i}))}{\h^s(F)} = \frac{\h^s(\bigcup_{\i\in\mathcal{I}} K_{\i})}{\h^s(F_{\j})} = \frac{\h^s(f^{-1}(F_{\j}))}{\h^s(F_{\j})} = c_1 \leq \frac{\h^s(K)}{\h^s(F)},
    \end{equation}
    which will be used later.
\end{proof}

\begin{corollary}\label{lem:infoflcellstof}
    We can find $\widetilde{K}\in\mathcal{K}$ and a Lipschitz surjection $\widetilde{f}: \widetilde{K}\to F$ with $\lip(\widetilde{f})\leq L$ such that 
    \begin{align*}
        \h^s(\widetilde{K}) = \inf\{ \h^s(E): E\in\mathcal{K}, \exists\text{ a Lipschitz} &\text{  surjection $g:E\to F$ with $\lip(g)\leq L$}\}.
    \end{align*}
    Moreover, $\h^s(\widetilde{K})\leq \h^s(K)$.
\end{corollary}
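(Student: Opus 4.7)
The plan is to apply the compactness statement (Lemma~\ref{lem:takinglimits}) to a minimizing sequence and then use Remark~\ref{rem:eventuallydense} to guarantee that the surjectivity survives in the limit. Write
\[
    \alpha := \inf\{ \h^s(E): E\in\mathcal{K}, \exists\text{ a Lipschitz surjection } g:E\to F \text{ with } \lip(g)\leq L\}.
\]
By the previous lemma the set on the right is nonempty, so $\alpha$ is a well-defined, nonnegative real number. Moreover, the specific element $\bigcup_{\i\in\mathcal{I}}\psi_{\j}^{-1}(K_{\i})\in\mathcal{K}$ produced there has measure at most $\h^s(K)$ by~\eqref{eq:thefirstboundedmass}, so $\alpha\leq\h^s(K)$.

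By the definition of infimum, choose a sequence $\{E_n\}\subset\mathcal{K}$ with Lipschitz surjections $f_n:E_n\to F$ satisfying $\lip(f_n)\leq L$ and $\h^s(E_n)\to\alpha$. Since $f_n(E_n)=F$ for every $n$, the images are trivially eventually dense in $F$. Apply Lemma~\ref{lem:takinglimits} to this sequence: after passing to a subsequence we obtain $\widetilde{K}\in\mathcal{K}$ and a Lipschitz map $\widetilde{f}:\widetilde{K}\to F$ with $\lip(\widetilde{f})\leq L$ and
\[
    \h^s(\widetilde{K})=\lim_{j\to\infty}\h^s(E_{n_j})=\alpha.
\]
Remark~\ref{rem:eventuallydense} now applies and guarantees $\widetilde{f}(\widetilde{K})=F$, so $\widetilde{f}$ is the desired Lipschitz surjection.

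Finally, the bound $\h^s(\widetilde{K})\leq\h^s(K)$ follows directly from $\h^s(\widetilde{K})=\alpha\leq\h^s(K)$, as noted at the start. I do not expect any real obstacle here: the only subtle point is that the infimum is realized because Lemma~\ref{lem:takinglimits} provides simultaneous control of both the measure (via the $\lim\h^s(E_{n_j})$ identity) and the Lipschitz constant, while Remark~\ref{rem:eventuallydense} upgrades the measure-theoretic limit to an actual surjection onto $F$.
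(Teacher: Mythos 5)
Your proposal is correct and follows essentially the same route as the paper: take a minimizing sequence of pairs $(E_n,f_n)$ and apply Lemma~\ref{lem:takinglimits} to realize the infimum, with the bound $\h^s(\widetilde{K})\leq\h^s(K)$ coming from~\eqref{eq:thefirstboundedmass}. The only (harmless) difference is in how surjectivity of the limit map is deduced: you invoke Remark~\ref{rem:eventuallydense} via the trivial Hausdorff convergence $f_n(E_n)=F$, whereas the paper uses the measure inequality $\h^s(\widetilde{f}(\widetilde{K}))\geq\h^s(F)$ from Lemma~\ref{lem:takinglimits} together with the fact that a compact full-measure subset of $F$ must be all of $F$; both are immediate.
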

\begin{proof}
    The above lemma tells us that the infimum is well defined. Then one can take $\{E_n\}_{n=1}^\infty\subset\mathcal{K}$ and a sequence $\{g_n\}_{n=1}^\infty$, where each $g_n:E_n\to F$ is a Lipschitz surjection with $\lip(g_n)\leq L$, such that $\lim_n \h^s(E_n)$ equals the infimum. By Lemma~\ref{lem:takinglimits}, we can find $\widetilde{K}\in\mathcal{K}$ and a Lipschitz map $\widetilde{f}:\widetilde{K}\to F$ with $\lip(\widetilde{f})\leq L$ such that 
    \[
        \h^s(\widetilde{K}) = \lim_{j\to\infty}\h^s(E_{n_j}) \quad\text{and}\quad \h^s(\widetilde{f}(K)) \geq \limsup_{j\to\infty} \h^s(g_{n_j}(E_{n_j})) = \h^s(F)
    \]
    for some subsequence $\{n_j\}$. The former implies that $\h^s(\widetilde{K})$ equals the desired infimum and thus does not exceed $\h^s(K)$ (recall~\eqref{eq:thefirstboundedmass}); the latter implies that $\widetilde{f}$ is onto.
\end{proof}

In the sequel of the paper, let us fix such an optimal $\widetilde{K}\in \mathcal{K}$, say $\widetilde{K}=\bigcup_{t=1}^p (a_tO_tK+b_t)$, and a Lipschitz surjection $\widetilde{f}: \widetilde{K} \to F$ with $\lip(\widetilde{f})\leq L$. Write $\widetilde{c}:=\frac{\h^s(\widetilde{K})}{\h^s(F)}$. Also fix a small constant $0<\delta<\min\{1,a_1,a_2,\ldots,a_p\}$.

We next prove some nice measure-preserving properties of $\widetilde{f}:\widetilde{K}\to F$ enforced by the minimality of $\h^s(\widetilde{K})$. Since the domain of $\widetilde{f}$ is a union of scaled copies of $K$ rather than $K$ itself, it is more convenient to use 
\begin{equation}\label{eq:iprimetn}
    I'_{t,n} := \{\i\in I^*: a_tr_{\i} \leq\delta^n < a_tr_{\i^-}\}, \quad 1\leq t\leq p 
\end{equation}
instead of $I_n$ (defined in~\eqref{eq:originalinjn}).

\begin{lemma}\label{lem:tworeofa}
    For every $n$ and $\j\in J_n$, $\widetilde{f}^{-1}(F_{\j}) = \bigcup_{t=1}^p \bigcup_{\i\in\mathcal{I}_t} (a_tO_tK_{\i}+b_t)$ with $\mathcal{I}_t\subset I'_{t,n}$ for all $1\leq t\leq p$. Moreover, if $\frac{\h^{s}(\widetilde{f}^{-1}(F_{\j}))}{\h^s(F_{\j})}\leq\widetilde{c}$, then $\psi_{\j}^{-1}(\widetilde{f}^{-1}(F_{\j}))\in\mathcal{K}$ and $\frac{\h^{s}(\widetilde{f}^{-1}(F_{\j}))}{\h^s(F_{\j})}=\widetilde{c}$.
\end{lemma}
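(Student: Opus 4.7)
The plan naturally splits along the two assertions. For the first (structural) claim, I would exploit that $\widetilde{f}$ is $L$-Lipschitz together with the fact that $I'_{t,n}$ is defined exactly so that each cell $a_tO_tK_{\i}+b_t$ with $\i\in I'_{t,n}$ has diameter $a_tr_{\i}\leq\delta^n$. Hence the image $\widetilde{f}(a_tO_tK_{\i}+b_t)$ has diameter at most $L\delta^n$. By Lemma~\ref{lem:factsinq2}(2), any two distinct level-$n$ cells $F_{\j},F_{\j'}$ are strictly more than $L\delta^n$ apart, so this image is forced into a single $F_{\j}$. Grouping the cells by the target index $\j$ yields the claimed decomposition $\widetilde{f}^{-1}(F_{\j})=\bigcup_t\bigcup_{\i\in\mathcal{I}_t}(a_tO_tK_{\i}+b_t)$ with $\mathcal{I}_t\subset I'_{t,n}$.

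For the ``moreover'' statement, the strategy is to rescale by $\psi_{\j}^{-1}$ and invoke the minimality in Corollary~\ref{lem:infoflcellstof}. Put $E:=\psi_{\j}^{-1}(\widetilde{f}^{-1}(F_{\j}))$ and define $g:=\psi_{\j}^{-1}\circ\widetilde{f}\circ\psi_{\j}$; then $g$ sends $E$ onto $\psi_{\j}^{-1}(F_{\j})=F$ and, since $\psi_{\j}$ is a similitude, $\lip(g)=\lip(\widetilde{f})\leq L$. If $E$ can be shown to lie in $\mathcal{K}$, the definition of $\widetilde{K}$ forces $\h^s(E)\geq\h^s(\widetilde{K})$, i.e.\ $\lambda_{\j}^{-s}\h^s(\widetilde{f}^{-1}(F_{\j}))\geq\widetilde{c}\h^s(F)$, which reverses the hypothesis and produces the claimed equality.

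It remains to check $E\in\mathcal{K}=\mathcal{K}(P,\gamma,u,v)$, whose three defining conditions must be verified on the pieces $\lambda_{\j}^{-1}(a_tO_tK_{\i}+b_t)$. Writing $c_{t,\i}:=\lambda_{\j}^{-1}a_tr_{\i}$, the bounds $\i\in I'_{t,n}$ and $\j\in J_n$ give $a_tr_{\i}\in(\underline{r}\delta^n,\delta^n]$ and $\lambda_{\j}\in(\underline{\lambda}\delta_n,\delta_n]$ with $\delta_n=L\delta^n/\Delta_F$, placing $c_{t,\i}$ in $[u,v]$. The $\gamma$-separation is handled by cases: for $t=t'$ and distinct $\i,\i'\in\mathcal{I}_t$, the common prefix satisfies $a_tr_{\i\wedge\i'}\geq a_tr_{\i^-}>\delta^n$, hence $\lambda_{\j}^{-1}a_tr_{\i\wedge\i'}\Delta_K>\Delta_F\Delta_K/L=\gamma$; for $t\neq t'$ the original pieces of $\widetilde{K}$ are $\gamma$-separated and the factor $\lambda_{\j}^{-1}\geq 1$ only enlarges the distance.

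The bound on the number of pieces is the single place where the hypothesis $\h^s(\widetilde{f}^{-1}(F_{\j}))/\h^s(F_{\j})\leq\widetilde{c}$ enters, and I expect it to be the main sticking point. Each blown-up piece has $\h^s$-mass at least $u^s\h^s(K)$, while
\[
    \h^s(E)=\lambda_{\j}^{-s}\h^s(\widetilde{f}^{-1}(F_{\j}))\leq\widetilde{c}\h^s(F)=\h^s(\widetilde{K})\leq\h^s(K),
\]
the last inequality by Corollary~\ref{lem:infoflcellstof}. Hence the number of pieces is at most $u^{-s}=L^s/(\underline{r}^s\Delta_F^s)$, which, being an integer, is at most $P$. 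Without the hypothesized upper bound on the mass ratio, $E$ could contain too many components to lie in $\mathcal{K}$ and the minimality argument could not be launched; this is precisely why the conclusion is stated as a conditional. Once $E\in\mathcal{K}$ is established, everything else is routine.
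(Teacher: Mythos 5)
Your proposal is correct and follows essentially the same route as the paper: the diameter-versus-separation argument for the structural decomposition, the blow-up by $\psi_{\j}^{-1}$ combined with the minimality of $\h^s(\widetilde{K})$ for the equality, and the three-part verification (ratio range, $\gamma$-separation in the two cases $t=t'$ and $t\neq t'$, and the piece count via the mass bound) that $\psi_{\j}^{-1}(\widetilde{f}^{-1}(F_{\j}))\in\mathcal{K}$. You also correctly identify the piece-count bound as the one step where the hypothesis $\h^s(\widetilde{f}^{-1}(F_{\j}))/\h^s(F_{\j})\leq\widetilde{c}$ is needed, which is exactly how the paper's proof is structured.
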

\begin{proof}
    Fix $n$ and $\j\in J_n$. Firstly, for each $1\leq t\leq p$, if $\i\in I'_{t,n}$ then by Lemma~\ref{lem:factsinq2}(2),
    \begin{align}
        \diam(\widetilde{f}(a_tO_tK_{\i}+b_t)) &\leq L\diam(a_tO_tK_{\i}+b_t) \notag \\
        &= La_tr_{\i} 
        \leq L\delta^n < \min_{\eta\neq\eta'\in J_n} \dist(F_{\eta},F_{\eta'}), \label{eq:fimageinonecell}
    \end{align} 
    that is, $\widetilde{f}(a_tO_tK_{\i}+b_t)$ should be contained in a unique $F_\eta$. Therefore, one can write 
    \begin{equation}\label{eq:expressionoffinv}
        \widetilde{f}^{-1}(F_{\j}) = \bigcup_{t=1}^p \bigcup_{\i\in\mathcal{I}_t} (a_tO_tK_{\i}+b_t),
    \end{equation}
    where $\mathcal{I}_t\subset I'_{t,n}$ for all $1\leq t\leq p$.
    
    Secondly, we show that $\psi_{\j}^{-1}\widetilde{f}^{-1}(F_{\j}) = \bigcup_{t=1}^p \bigcup_{\i\in\mathcal{I}_t} \psi_{\j}^{-1}(a_tO_tK_{\i}+b_t)$ is a member of $\mathcal{K}$ provided that $\frac{\h^{s}(\widetilde{f}^{-1}(F_{\j}))}{\h^s(F_{\j})}\leq\widetilde{c}$. Let $1\leq t,t'\leq p$, $\i\in\mathcal{I}_t$ and $\i'\in\mathcal{I}_{t'}$. By Lemma~\ref{lem:factsinq2}(1), every ratio $\lambda_{\j}^{-1}a_tr_{\i}\in[\frac{\underline{r}\Delta_F}{L},\frac{\Delta_F}{\underline{\lambda}L}]$. Furthermore, if $t\neq t'$ then since $\widetilde{K}\in\mathcal{K}$,
    \begin{align*}
        \dist(\psi_{\j}^{-1}(a_tO_tK_{\i}+b_t), & \psi_{\j}^{-1}(a_{t'}O_{t'}K_{\i'}+b_{t'})) \\
        &> \dist(a_tO_tK_{\i}+b_t, a_{t'}O_{t'}K_{\i'}+b_{t'}) \\
        &\geq \dist(a_tO_tK+b_t, a_{t'}O_{t'}K+b_{t'}) 
        \geq \gamma;
    \end{align*}
    while if $t=t'$ and $\i\neq\i'$ then 
    \begin{align*}
        \dist(\psi_{\j}^{-1}(a_tO_t &K_{\i}+b_t), \psi_{\j}^{-1}(a_{t'}O_{t'}K_{\i'}+b_{t'})) \\
        &= \lambda_{\j}^{-1}a_t\dist(K_{\i}, K_{\i'}) \\
        &\geq \lambda_{\j}^{-1}a_tr_{\i^-}\Delta_K 
        \geq \frac{\Delta_F}{L\delta^n}\cdot \delta^n\cdot\Delta_K = L^{-1}\Delta_F\Delta_K = \gamma
    \end{align*}
    as well.  

    Now assume that $\frac{\h^{s}(\widetilde{f}^{-1}(F_{\j}))}{\h^s(F_{\j})}\leq\widetilde{c}$. Then since $\h^s(\widetilde{K})\leq \h^s(K)$ (see Corollary~\ref{lem:infoflcellstof}) and $\j\in J_n$,
    \[
        \h^s(\widetilde{f}^{-1}(F_{\j})) \leq \widetilde{c}\h^s(F_{\j})= \frac{\h^s(\widetilde{K})}{\h^s(F)}\cdot\lambda_{\j}^s\h^s(F)\leq \lambda_{\j}^s\h^s(K)\leq \frac{(L\delta^n)^s}{\Delta_F^s}\h^s(K).
    \]
    On the other hand, the expression~\eqref{eq:expressionoffinv} immediately implies
    \[
        \h^s(\widetilde{f}^{-1}(F_{\j})) \geq \Big( \sum_{t=1}^p \#\mathcal{I}_t \Big)\cdot (\underline{r}\delta^n)^s\h^s(K). 
    \]
    Thus $\sum_{t=1}^p \#\mathcal{I}_t \leq \lfloor\frac{L^s}{\underline{r}^s\Delta_F^s}\rfloor = P$. Altogether, we have $\psi_{\j}^{-1}\widetilde{f}^{-1}(F_{\j})\in\mathcal{K}$. 

    Furthermore, the inclusion $\widetilde{f}: \widetilde{f}^{-1}(F_{\j})\to F_{\j}$ naturally generates a Lipschitz surjection
    \begin{align*}
        g: \psi_{\j}^{-1}(\widetilde{f}^{-1}(F_{\j})) &\to F \\
        \psi_{\j}^{-1}(x) &\mapsto \psi_{\j}^{-1}\widetilde{f}(x)
    \end{align*}
    with $\lip(g)\leq \lip(\widetilde{f})\leq L$. Since $\psi_{\j}^{-1}\widetilde{f}^{-1}(F_{\j})\in\mathcal{K}$, it follows from our choice of $\widetilde{K}$ that 
    \[
        \h^s(\psi_{\j}^{-1}\widetilde{f}^{-1}(F_{\j})) \geq \h^s(\widetilde{K}) = \widetilde{c}\h^s(F),
    \]
    which implies that $\frac{\h^s(\widetilde{f}^{-1}(F_{\j}))}{\h^s(F_{\j})}=\widetilde{c}$ as desired.
\end{proof}

\begin{proposition}\label{prop:linearityofpreimage}
    For all $n\geq 1$ and $\j\in J_n$, $\frac{\h^s(\widetilde{f}^{-1}(F_{\j}))}{\h^s(F_{\j})} = \widetilde{c}$ and $\psi_{\j}^{-1}\widetilde{f}^{-1}(F_{\j})\in\mathcal{K}$.
\end{proposition}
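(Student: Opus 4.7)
The plan is to combine a straightforward averaging identity with the conditional statement already packaged inside Lemma~\ref{lem:tworeofa}. First I would note that, because $\widetilde{f}:\widetilde{K}\to F$ is surjective and $\{F_{\j}\}_{\j\in J_n}$ is a genuine disjoint decomposition of $F$ (the defining inequality $\delta_n<\lambda_{\j^{-}}$ prevents one word of $J_n$ from being a prefix of another, so $J_n$ is an antichain, and the SSC then yields $F_{\j}\cap F_{\j'}=\varnothing$ for distinct members of that antichain), the preimages $\{\widetilde{f}^{-1}(F_{\j})\}_{\j\in J_n}$ partition $\widetilde{K}$. In particular,
\[
    \widetilde{c} \;=\; \frac{\h^s(\widetilde{K})}{\h^s(F)} \;=\; \frac{\sum_{\j\in J_n}\h^s(\widetilde{f}^{-1}(F_{\j}))}{\sum_{\j\in J_n}\h^s(F_{\j})},
\]
which exhibits $\widetilde{c}$ as a weighted average, with strictly positive weights $\h^s(F_{\j})$, of the density ratios $\rho_{\j}:=\h^s(\widetilde{f}^{-1}(F_{\j}))/\h^s(F_{\j})$.

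The second step is to extract the two conclusions from this averaging by an extremal argument. Since $J_n$ is finite, some $\j_0\in J_n$ attains $\rho_{\j_0}=\min_{\j}\rho_{\j}$. The identity above forces $\rho_{\j_0}\leq\widetilde{c}$, so the hypothesis of Lemma~\ref{lem:tworeofa} is met at $\j_0$; the lemma upgrades this to $\rho_{\j_0}=\widetilde{c}$ and also gives $\psi_{\j_0}^{-1}\widetilde{f}^{-1}(F_{\j_0})\in\mathcal{K}$. But now every $\rho_{\j}\geq\rho_{\j_0}=\widetilde{c}$, and simultaneously the weighted mean of the $\rho_{\j}$ equals $\widetilde{c}$, which is possible only if $\rho_{\j}=\widetilde{c}$ for every $\j\in J_n$. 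Applying Lemma~\ref{lem:tworeofa} once more, this time to each individual $\j\in J_n$ (each of which now trivially satisfies the hypothesis $\rho_{\j}\leq\widetilde{c}$), yields $\psi_{\j}^{-1}\widetilde{f}^{-1}(F_{\j})\in\mathcal{K}$, which is the second required conclusion.

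I do not expect a real obstacle here: Lemma~\ref{lem:tworeofa} has already done all the heavy lifting by turning the minimality of $\h^s(\widetilde{K})$ into a one-sided rigidity statement at the scale of a single cell, and the argument above is nothing more than a one-line averaging observation that propagates this rigidity to every cell of $F$ at level $n$. The only bookkeeping detail worth stating carefully is the clean disjoint decomposition $F=\bigsqcup_{\j\in J_n}F_{\j}$, so that no mass is lost and the average identity is literally exact; once this is in place the rest is automatic.
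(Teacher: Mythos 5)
Your proposal is correct and follows essentially the same route as the paper: both exhibit $\widetilde{c}$ as a convex combination of the ratios $\h^s(\widetilde{f}^{-1}(F_{\j}))/\h^s(F_{\j})$ over the antichain $J_n$ and then invoke Lemma~\ref{lem:tworeofa} to rule out any ratio falling below $\widetilde{c}$, forcing all ratios to equal $\widetilde{c}$ and yielding the membership in $\mathcal{K}$. Your phrasing via the minimizing word $\j_0$ is just a cosmetic variant of the paper's proof by contradiction.
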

\begin{proof}
    Let $n\geq 1$. Since $\widetilde{f}:\widetilde{K} \to F$ is onto, 
    \begin{align*}
        \widetilde{c} = \frac{\h^s(\widetilde{K})}{\h^s(F)} = \sum_{\j\in J_n} \frac{\h^s(F_{\j})}{\h^s(F)}\cdot\frac{\h^s(\widetilde{f}^{-1}(F_{\j}))}{\h^s(F_{\j})} = \sum_{\j\in J_n} \lambda_{\j}^s \frac{\h^s(\widetilde{f}^{-1}(F_{\j}))}{\h^s(F_{\j})},
    \end{align*}
    which is a convex combination. The identity should be $\widetilde{c}=\sum_{\j\in J_n}\lambda_{\j}^s\widetilde{c}$. Otherwise, since every term in that combination is positive, there exists $\j\in J_n$ such that $\frac{\h^s(\widetilde{f}^{-1}(F_{\j}))}{\h^s(F_{\j})}<\widetilde{c}$, contradicting Lemma~\ref{lem:tworeofa}. In particular, $\frac{\h^s(\widetilde{f}^{-1}(F_{\j}))}{\h^s(F_{\j})}=\widetilde{c}$ for every $\j\in J_n$, which in turn implies by Lemma~\ref{lem:tworeofa} that $\psi_{\j}^{-1}\widetilde{f}^{-1}(F_{\j})\in\mathcal{K}$.
\end{proof}

\begin{remark}\label{rem:toallborelsubsets}
    It is standard to extend the above measure-preserving property to all Borel subsets of $F$, i.e., $\frac{\h^s(\widetilde{f}^{-1}(B))}{\h^s(B)}=\widetilde{c}$ for all Borel $B\subset F$. This fact will be used later. 
\end{remark}

In other words, the Lipschitz surjection $\widetilde{f}$ is measure-preserving in some rough sense. This immediately yields the following corollary, recovering the aforementioned result of Balka and Keleti (Theorem~\ref{thm:bk24}). 

\begin{corollary}[\cite{BK24}]\label{cor:bk24}
    If $\Phi$ is homogeneous, then $K$ and $F$ are Lipschitz equivalent.
\end{corollary}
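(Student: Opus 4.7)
The plan is to invoke Lemma~\ref{lem:logdepimpliesequi}: since $\Phi$ is homogeneous with common ratio $r$, it suffices to establish $\log\lambda_j/\log r\in\mathbb{Q}$ for every $j\in J$. I would extract this algebraic rigidity from Proposition~\ref{prop:linearityofpreimage} combined with the membership $\psi_{\j}^{-1}\widetilde{f}^{-1}(F_{\j})\in\mathcal{K}$ for every $\j\in J^*$.

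First, homogeneity sharpens the structure of Lemma~\ref{lem:tworeofa}: because $r_{\i}=r^{|\i|}$, for each $t$ and $n$ the index set $I'_{t,n}$ in~\eqref{eq:iprimetn} consists of words of one common length $m_t(n)$. Hence for every $\j\in J_n$ the preimage $\widetilde{f}^{-1}(F_{\j})$ decomposes into cells of uniform diameter $a_tr^{m_t(n)}$ within each piece. Writing $N:=|I|$ so that $r^s=1/N$, and equating the cell-counting expression for $\h^s(\widetilde{f}^{-1}(F_{\j}))$ with the measure-preserving value $\widetilde{c}\lambda_{\j}^s\h^s(F)$, one obtains the key identity
\[
\lambda_{\j}^s\sum_{t=1}^p a_t^s \;=\; \sum_{t=1}^p |\mathcal{I}_t(\j)|\,a_t^s N^{-m_t(n)}, \qquad \j\in J_n.
\]

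Second, the constraint $\psi_{\j}^{-1}\widetilde{f}^{-1}(F_{\j})\in\mathcal{K}$ forces every active scale $\lambda_{\j}^{-1}a_tr^{m_t(n)}$ into the fixed window $[u,v]$; taking logs, $\log\lambda_{\j}$ is pinned up to a bounded error to one of the $p$ cosets of $\log r\cdot\mathbb{Z}$ indexed by $\{\log a_t\bmod\log r\}$. Specialising to $\j=j^n$ for a fixed $j\in J$ and applying pigeonhole on the active index $t=t(n)$, I would extract an infinite $\mathcal{N}\subset\mathbb{N}$ and a single $t^{\ast}$ such that $n\log\lambda_j-m(n)\log r$ stays bounded across $n\in\mathcal{N}$.

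Third, I would play this metric boundedness against the integrality $|\mathcal{I}_{t^{\ast}}(j^n)|\in\{0,1,\ldots,N^{m_{t^{\ast}}(n)}\}$ dictated by the measure identity. Iterating in $n$ and comparing the identities at $j^{n_1}$, $j^{n_2}$ and $j^{n_1+n_2}$, one should obtain multiplicative relations among the rational numbers $|\mathcal{I}_{t^{\ast}}(j^n)|/N^{m(n)}$ that pin down $\lambda_j^s$ as a positive rational of the form $k\cdot N^{-m}$; together with $r^s=1/N$ this yields $\log\lambda_j/\log r\in\mathbb{Q}$. The main obstacle is precisely this synthesis step: mere boundedness of $n\log\lambda_j-m(n)\log r$ is automatic (the distance from $n\log\lambda_j$ to $\log r\cdot\mathbb{Z}$ never exceeds $|\log r|/2$) and cannot by itself rule out an irrational ratio, so the decisive leverage has to come from the combinatorial rigidity of the integer counts $|\mathcal{I}_t(\j)|$, which admit no continuous wiggle room. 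Once rationality is verified for every $j\in J$, Lemma~\ref{lem:logdepimpliesequi} completes the proof.
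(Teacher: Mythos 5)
Your setup coincides with the paper's: reduce to Lemma~\ref{lem:logdepimpliesequi}, use the measure-preserving property of Proposition~\ref{prop:linearityofpreimage}, note that homogeneity of $\Phi$ makes each $I'_{t,n}$ consist of words of a single length $q_{t,n}$, and observe that the offsets $q_{t,n}-q_{t',n}$ are uniformly bounded. Your mass identity $\lambda_{\j}^s\sum_t a_t^s=\sum_t\#\mathcal{I}_t(\j)\,a_t^sN^{-m_t(n)}$ is also correct. The problem is your third step, which you yourself flag as the main obstacle: ``iterating and comparing the identities at $j^{n_1}$, $j^{n_2}$ and $j^{n_1+n_2}$ to obtain multiplicative relations'' is not an argument. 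There is no mechanism for such multiplicativity --- $\widetilde{f}^{-1}(F_{\j\j'})$ bears no product relation to $\widetilde{f}^{-1}(F_{\j})$ and $\widetilde{f}^{-1}(F_{\j'})$ --- and, as you correctly note, the boundedness of $n\log\lambda_j-m(n)\log r$ is vacuous. Moreover, by quoting only the trivial range $\#\mathcal{I}_{t}(j^n)\in\{0,1,\ldots,N^{m_{t}(n)}\}$ you discard exactly the piece of rigidity that closes the argument.

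The decisive input is that $\psi_{\j}^{-1}\widetilde{f}^{-1}(F_{\j})\in\mathcal{K}$ bounds the \emph{total number of cells}: $\sum_{t}\#\mathcal{I}_t(\j)\leq P$ uniformly in $\j$. Combined with $|q_{t,n}-q_{1,n}|\leq C$, this forces the normalized mass $\h^s(\widetilde{f}^{-1}(F_{j^{k_n}}))/\bigl(\h^s(K)\,r^{q_{1,n}s}\bigr)=\sum_t\#\mathcal{I}_t\,a_t^s\,r^{(q_{t,n}-q_{1,n})s}$ to take values in a \emph{finite} set $\mathcal{B}$ independent of $n$ (here $j^{k_n}$ denotes the word in $J_n$ obtained by iterating $j$, as in the paper). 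The pigeonhole principle then yields $n\neq\tilde n$ with the same value $b\in\mathcal{B}$ and $q_{1,n}\neq q_{1,\tilde n}$, $k_n\neq k_{\tilde n}$, whence
\[
\lambda_j^{(k_n-k_{\tilde n})s}
=\frac{\h^s(\widetilde{f}^{-1}(F_{j^{k_n}}))}{\h^s(\widetilde{f}^{-1}(F_{j^{k_{\tilde n}}}))}
=r^{(q_{1,n}-q_{1,\tilde n})s},
\]
the first equality by Proposition~\ref{prop:linearityofpreimage} and the second by cancellation of $b$. This gives $\log\lambda_j/\log r\in\Q$ directly, with no need to identify $\lambda_j^s$ as a rational of the form $kN^{-m}$ (which need not hold). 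Without the uniform bound $\sum_t\#\mathcal{I}_t\leq P$ the set of attainable normalized masses is infinite and no pigeonhole is available, so your plan as written does not close.
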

\begin{proof}
    Write $r$ to be the common ratio of similitudes in $\Phi$. By Lemma~\ref{lem:logdepimpliesequi}, it suffices to show that $\frac{\log\lambda_j}{\log r}\in\mathbb{Q}$ for $j\in J$. Let $j\in J$ and pick for $n\geq 1$ a positive integer $k_n$ so that 
    \[
        j^{k_n}:=\underbrace{jj\cdots j}_{k_n\text{ terms}}\in J_n.
    \]
    For any $1\leq t\leq p$, let $q_{t,n}$ be the largest positive integer such that $r^{q_{t,n}}\leq \delta^n/a_t$. Since $r_i= r$ for all $i\in I$, $I'_{t,n}=I^{q_{t,n}}$ (recall~\eqref{eq:iprimetn}). Then for $t\neq t'$, we always have $a_tr^{q_{t,n}}\leq \delta^n<a_{t'}r^{q_{t',n}-1}$ and hence $r^{q_{t',n}-q_{t,n}}>a_tr/a_{t'}$. So there is a constant $C\geq 1$ such that 
    \begin{equation}\label{eq:boundednessofqtqt}
    	|q_{t,n}-q_{t',n}| \leq C, \quad \forall 1\leq t,t'\leq p, \forall n\in\mathbb{Z}^+.
    \end{equation} 
    
    By Lemma~\ref{lem:tworeofa} and Proposition~\ref{prop:linearityofpreimage},  
    \begin{equation}\label{eq:finitepossofjknmass}
        \h^s(\widetilde{f}^{-1}(F_{j^{k_n}})) \in \Big\{ \h^s(K)\sum_{t=1}^p c_{n,t}a_t^sr^{q_{t,n}s}: c_{n,t}\in\mathbb{Z}^+\cup\{0\}  \text{ with } \sum_{t=1}^p c_{n,t}\leq P \Big\}.
    \end{equation} 
    Writing
    \[
        \mathcal{B} := \Big\{ \sum_{t=1}^p c_{t}a_t^sr^{\beta_t s}: c_{t}\in\mathbb{Z}^+\cup\{0\}, \beta_t\in\mathbb{Z} \text{ are such that} \sum_{t=1}^p c_{t}\leq P \text{ and} -C\leq \beta_t\leq C \Big\},
    \]
    we see from~\eqref{eq:boundednessofqtqt} and~\eqref{eq:finitepossofjknmass} that
    \[
        \h^s(\widetilde{f}^{-1}(F_{j^{k_n}})) \in \{\h^s(K)r^{q_{1,n}s}b: b\in\mathcal{B}\}, \quad \forall n\geq 1.
    \]
    Since $\mathcal{B}$ is a finite collection, we can find (by the pigeonhole principle) a pair of distinct integers $n,\tilde{n}$ such that 
    \[
        \frac{\h^s(\widetilde{f}^{-1}(F_{j^{k_n}}))}{\h^s(\widetilde{f}^{-1}(F_{j^{k_{\tilde{n}}}}))} = r^{(q_{1,n}-q_{1,\tilde{n}})s}
    \]
    whereas $q_{1,n}\neq q_{1,\tilde{n}}$ and $k_n\neq k_{\tilde{n}}$ (these automatically hold when $|n-\tilde{n}|$ is large enough). By Proposition~\ref{prop:linearityofpreimage}, we also have 
    \[
        \frac{\h^s(\widetilde{f}^{-1}(F_{j^{k_n}}))}{\h^s(\widetilde{f}^{-1}(F_{j^{k_{\tilde{n}}}}))} = \frac{\h^s(F_{j^{k_n}})}{\h^s(F_{j^{k_{\tilde{n}}}})} = \lambda_j^{(k_n-k_{\tilde{n}})s}.
    \]
    Thus $\frac{\log\lambda_j}{\log r}\in\mathbb{Q}$. 
\end{proof}

Unfortunately, the above argument fails without the homogeneity of $\Phi$, and even granting the homogeneity of $\Psi$ seems to offer little help. This reveals a subtle asymmetry: to upgrade a Lipschitz embedding to a Lipschitz isomorphism, assuming the homogeneity on $K$ is markedly easier than assuming that on $F$. It is noteworthy that Proposition~\ref{prop:linearityofpreimage} may relate to a characterization given by Xi~\cite{Xi10} via finding appropriate graph-directed subsystems. However, we do not know whether the measure-preserving property in Proposition~\ref{prop:linearityofpreimage} would directly ensure the condition in Xi's criterion (and thereby conclude the equivalence). 

A natural step forward would be to ask how far the optimal map $\widetilde{f}$ deviates from being bi-Lipschitz. The next lemma shows that $\widetilde{f}$ is almost injective and locally nondegenerate.

\begin{lemma}\label{lem:disjointness}
    Let $n\geq 1$, $\j\in J_n$ and write $\widetilde{f}^{-1}(F_{\j})=\bigcup_{t=1}^p\bigcup_{\i\in\mathcal{I}_t} (a_tO_tK_{\i}+b_t)$ (where $\mathcal{I}_t\subset I'_{t,n}$) as in Lemma~\ref{lem:tworeofa}. Then the following properties hold. 
    \begin{enumerate}
        \item $\h^s(\widetilde{f}(a_tO_tK_{\i}+b_t)) \approx \h^s(F_{\j})$ for all $1\leq t\leq p$ and $\i\in\mathcal{I}_t$;
        \item $\h^s(\widetilde{f}(a_tO_tK_{\i}+b_t)\cap \widetilde{f}(a_{t'}O_{t'}K_{\i'}+b_{t'}))=0$ unless $(t,\i)=(t',\i')$.
    \end{enumerate}
\end{lemma}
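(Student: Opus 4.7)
The plan is to combine the measure-preserving identity of Remark~\ref{rem:toallborelsubsets} ($\h^s(\widetilde{f}^{-1}(B))=\widetilde{c}\h^s(B)$ for every Borel $B\subset F$) with the Lipschitz bound $\lip(\widetilde{f})\leq L$ and the minimality of $\h^s(\widetilde{K})$ from Corollary~\ref{lem:infoflcellstof}. Throughout I would use that $\h^s(a_tO_tK_\i+b_t)=a_t^sr_\i^s\h^s(K)$ and $\h^s(F_\j)=\lambda_\j^s\h^s(F)$ are both comparable to $\delta^{ns}$ with universal constants, a fact which follows from the definitions of $I'_{t,n}$ in \eqref{eq:iprimetn} and $J_n$ in \eqref{eq:originalinjn} together with Lemma~\ref{lem:factsinq2}.

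For part (1), the upper bound is immediate from $\widetilde{f}$ being $L$-Lipschitz: $\h^s(\widetilde{f}(a_tO_tK_\i+b_t))\leq L^s a_t^s r_\i^s\h^s(K)\lesssim \h^s(F_\j)$. For the matching lower bound I would apply Remark~\ref{rem:toallborelsubsets} to $B:=\widetilde{f}(a_tO_tK_\i+b_t)$: since $a_tO_tK_\i+b_t\subseteq\widetilde{f}^{-1}(B)$,
\[
    \widetilde{c}\,\h^s(B)=\h^s(\widetilde{f}^{-1}(B))\geq a_t^s r_\i^s\h^s(K)\gtrsim \h^s(F_\j),
\]
so $\h^s(\widetilde{f}(a_tO_tK_\i+b_t))\gtrsim \h^s(F_\j)$ after dividing by the constant $\widetilde{c}$.

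For part (2), I would argue by contradiction. Suppose $\eta:=\h^s(\widetilde{f}(a_tO_tK_\i+b_t)\cap\widetilde{f}(a_{t'}O_{t'}K_{\i'}+b_{t'}))>0$ for some distinct $(t,\i),(t',\i')$, and set $\widetilde{A}:=\psi_\j^{-1}(a_tO_tK_\i+b_t)$ and $\widetilde{A}':=\psi_\j^{-1}(a_{t'}O_{t'}K_{\i'}+b_{t'})$. By Proposition~\ref{prop:linearityofpreimage}, $E_\j:=\psi_\j^{-1}\widetilde{f}^{-1}(F_\j)\in\mathcal{K}$, and the rescaled map $g_\j:E_\j\to F$ defined by $g_\j(\psi_\j^{-1}(x)):=\psi_\j^{-1}\widetilde{f}(x)$ is a Lipschitz surjection with $\lip(g_\j)\leq L$ and $\h^s(E_\j)=\widetilde{c}\h^s(F)$, so $E_\j$ itself attains the infimum of Corollary~\ref{lem:infoflcellstof}. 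The overlap pulls back to $g_\j(\widetilde{A})\cap g_\j(\widetilde{A}')$ of $\h^s$-measure $\lambda_\j^{-s}\eta$, and since $g_\j(\widetilde{A})\subseteq g_\j(E_\j\setminus\widetilde{A}')$, the residual $F\setminus g_\j(E_\j\setminus\widetilde{A}')\subseteq g_\j(\widetilde{A}')\setminus g_\j(\widetilde{A})$ has $\h^s$-measure at most $\h^s(g_\j(\widetilde{A}'))-\lambda_\j^{-s}\eta$. The plan is to supply an $L$-Lipschitz surjection onto this residual from a strictly smaller scaled copy of $K$, yielding a competitor in $\mathcal{K}$ with $\h^s$-measure strictly less than $\h^s(E_\j)$ and thereby contradicting minimality.

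The hard part will be constructing this replacement. The measure savings $\lambda_\j^{-s}\eta$ is genuine, but the diameter of the residual need not shrink accordingly, so one cannot naively swap $\widetilde{A}'$ for a copy of smaller scaling and still cover the residual with a single $L$-Lipschitz map. To overcome this I expect to reuse the blow-up/limiting machinery developed for Theorem~\ref{thm:main1reduction} (Lemma~\ref{lem:takinglimits}, Proposition~\ref{prop:countingprop}, and the Baire-category extraction at the end of Section~3), this time applied to the residual region inside $F$, in order to construct a sufficiently efficient $L$-Lipschitz surjection from a $K$-copy of strictly smaller scale onto the needed piece of $F$, closing the contradiction.
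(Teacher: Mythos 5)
Part (1) of your proposal is correct and is essentially the paper's argument: the upper bound from the Lipschitz constant (the paper uses the even simpler containment $\widetilde{f}(a_tO_tK_{\i}+b_t)\subset F_{\j}$), and the lower bound from applying the measure-preserving identity of Remark~\ref{rem:toallborelsubsets} to $B=\widetilde{f}(a_tO_tK_{\i}+b_t)$ together with $a_tO_tK_{\i}+b_t\subset\widetilde{f}^{-1}(B)$.

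Part (2), however, has a genuine gap, and you have correctly located it yourself: everything hinges on producing an $L$-Lipschitz surjection onto the ``residual'' from a domain of strictly smaller $\h^s$-measure, and your sketch never constructs it. Your plan of deleting $\widetilde{A}'$ from $E_{\j}$ and patching the uncovered part of $F$ with a smaller copy of $K$ faces two obstructions. First, the residual $F\setminus g_{\j}(E_{\j}\setminus\widetilde{A}')$ may have small measure but be spread throughout $F$, so covering it by a single $L$-Lipschitz image of a small copy of $K$ is exactly the kind of statement the minimality framework is built to forbid, not to supply; it cannot be extracted from Lemma~\ref{lem:takinglimits} or Proposition~\ref{prop:countingprop}, which only produce limits of maps you already have. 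Second, even if such a patch existed, a copy of $K$ of scale below $u$ is not admissible in $\mathcal{K}=\mathcal{K}(P,\gamma,u,v)$, so the resulting competitor could not be compared with $\widetilde{K}$ via Corollary~\ref{lem:infoflcellstof}.

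The paper's resolution avoids any patching. Instead of working at the scale of $F_{\j}$, it zooms in on the overlap set $E:=\widetilde{f}(a_tO_tK_{\i}+b_t)\cap\widetilde{f}(a_{t'}O_{t'}K_{\i'}+b_{t'})$ itself: by a density lemma for dust-like self-similar sets (\cite[Lemma~2.12]{EKM10}), if $\h^s(E)>0$ there are cells $F_{\eta_k}$ with $\h^s(E\cap F_{\eta_k})>(1-k^{-1})\h^s(F_{\eta_k})$. On such a cell, the single piece $A_{k,1}=\widetilde{f}^{-1}(F_{\eta_k})\cap(a_tO_tK_{\i}+b_t)$ already covers a $(1-o(1))$-fraction of $F_{\eta_k}$ in measure (because $E\subset\widetilde{f}(a_tO_tK_{\i}+b_t)$), while $A_{k,2}$ is nonempty and carries a definite proportion of the preimage mass, forcing $\h^s(A_{k,1})\leq(\widetilde{c}-c_0)\h^s(F_{\eta_k})$ for a uniform $c_0>0$. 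Blowing up by $\psi_{\eta_k}^{-1}$ and applying Lemma~\ref{lem:takinglimits} (with the ``closed set of full measure equals $F$'' observation) then yields a member of $\mathcal{K}$ of measure strictly below $\widetilde{c}\h^s(F)$ that still surjects onto $F$, contradicting the minimality of $\widetilde{K}$. This density-point zoom is the idea missing from your argument.
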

\begin{proof}
    The first statement is simple: since $\widetilde{f}(a_tO_tK_{\i}+b_t)\subset F_{\j}$, the $\lesssim$ direction holds trivially; for the inverse direction, one can apply Remark~\ref{rem:toallborelsubsets} to $\widetilde{f}(a_tO_tK_{\i}+b_t)$ and get 
    \begin{align*}
        \h^s(\widetilde{f}(a_tO_tK_{\i}+b_t)) &= \widetilde{c}^{-1}\h^s\big( \widetilde{f}^{-1}(\widetilde{f}(a_tO_tK_{\i}+b_t)) \big) \\
    	&\geq \widetilde{c}^{-1}\h^s(a_tO_tK_{\i}+b_t) \approx \h^s(F_{\j}), 
    \end{align*}
    where the last estimate holds because $\lambda_{\j}\approx a_tr_{\i}$ when $\i\in I'_{t,n}$.
    
    It remains to verify the second statement. Since $\widetilde{f}: \widetilde{K}\to F$ is a surjection,
    \[
        F_{\j}=\widetilde{f}\big(\widetilde{f}^{-1}(F_{\j})\big)=\bigcup_{t=1}^p\bigcup_{\i\in\mathcal{I}_{t}} \widetilde{f}(a_t O_{t}K_{\i}+b_{t}).
    \]
    Let $(t,\i)\neq (t',\i')$. Write $E:=\widetilde{f}(a_tO_tK_{\i}+b_t)\cap \widetilde{f}(a_{t'}O_{t'}K_{\i'}+b_{t'})$ and suppose on the contrary that $\h^s(E)>0$.  Then $E$ is a Borel subset of the dust-like self-similar set $F_{\j}$ with positive $\h^s$-measure. By \cite[Lemma~2.12]{EKM10}, we can find positive integers $m_1<m_2<\cdots$, and $\eta_k\in J_{m_k}$ (with $F_{\eta_k}\subset F_{\j}$) such that
    \begin{equation}\label{eq:limitofecapfetak}
        \h^s(E\cap F_{\eta_k})>\big(1-k^{-1}\big) \h^s(F_{\eta_k}), \quad \forall k\geq 1.
    \end{equation}
    
    For each $k\geq 1$, consider $A_k:=\widetilde{f}^{-1}(F_{\eta_k})$ and let 
    \[
        A_{k,1} := \widetilde{f}^{-1}(F_{\eta_k})\cap(a_tO_tK_{\i}+b_t) \quad\text{and}\quad A_{k,2} := \widetilde{f}^{-1}(F_{\eta_k})\cap(a_{t'}O_{t'}K_{\i'}+b_{t'}).
    \]
    Clearly, $A_{k,1}\cap A_{k,2}=\varnothing$ (since $(t,\i)\neq(t',\i')$) and $A_{k,1}\cup A_{k,2}\subset A_k$. Note that 
    \begin{equation}\label{eq:widef-intersec-equal}
        \widetilde{f}(A_{k,1})=F_{\eta_k}\cap \widetilde{f}(a_tO_tK_{\i}+b_t)
    \end{equation}
    (because $\widetilde{f}$ is onto and thus $\widetilde{f}(\widetilde{f}^{-1}(A)\cap B)=A\cap\widetilde{f}(B)$ for all $A\subset F$ and $B\subset K$). Combining this with $\h^s(E\cap F_{\eta_k})>0$, we see that $A_{k,1}\neq\varnothing$. Similarly, $A_{k,2}\neq\varnothing$.  Moreover, it follows from~\eqref{eq:limitofecapfetak} and~\eqref{eq:widef-intersec-equal} that 
    \begin{equation}\label{eq:limitofecapfetakcor}
    	1\geq \lim_{k\to\infty} \frac{\h^s(\widetilde{f}(A_{k,1}))}{\h^s(F_{\eta_k})} = \lim_{k\to\infty} \frac{\h^s(F_{\eta_k}\cap \widetilde{f}(a_tO_tK_{\i}+b_t))}{\h^s(F_{\eta_k})} \geq \lim_{k\to\infty} \frac{\h^s(F_{\eta_k}\cap E)}{\h^s(F_{\eta_k})} = 1,
    \end{equation}
    that is, \eqref{eq:limitofecapfetakcor} is an equality.
    
    By Lemma~\ref{lem:tworeofa}, we can write $\widetilde{f}^{-1}(F_{\eta_k})=\bigcup_{\ell=1}^p\bigcup_{\omega\in\mathcal{I}_{\ell,m_k}} (a_\ell O_{\ell}K_\omega+b_{\ell})$ for some $\mathcal{I}_{\ell,m_k}\subset I'_{\ell,m_k}$. Since $F_{\eta_k}\subset F_{\j}$, 
    \[
        A_{k,1} = \bigcup_{\omega\in\mathcal{I}_{t,m_k}:K_\omega\subset K_{\i}} (a_{t}O_{t}K_{\omega}+b_t) \quad\text{and}\quad A_{k,2} = \bigcup_{\omega\in\mathcal{I}_{t',m_k}:K_\omega\subset K_{\i'}} (a_{t'}O_{t'}K_{\omega}+b_{t'}).
    \]
    By Proposition~\ref{prop:linearityofpreimage}, $\psi_{\eta_k}^{-1}\widetilde{f}^{-1}(F_{\eta_k})\in\mathcal{K}$. So $\psi_{\eta_k}^{-1}(A_{k,1}),\psi_{\eta_k}^{-1}(A_{k,2})\in\mathcal{K}$ (because they are non-empty sub-unions). Also
    \begin{align*}
    	\h^s(A_{k,2}) &\geq \min_{\omega\in I'_{t',m_k}} \h^s(a_{t'}O_{t'}K_{\omega}+b_{t'}) \\
    	&\geq (\underline{r}\delta^{m_k})^s \h^s(K) && \text{(recall~\eqref{eq:iprimetn})} \\
    	&\geq \frac{\underline{r}^s\Delta_F^s}{L^s}\cdot \lambda_{\eta_k}^s\h^s(K) && \text{(since $\eta_k\in J_{m_k}$)} \\
    	&= \frac{\underline{r}^s\Delta_F^s}{L^s\h^s(F)}\h^s(F_{\eta_k}) \h^s(K). 
    \end{align*}
    Recalling that $A_{k,1}\cup A_{k,2}\subset A_k$ (which is a disjoint union) and $\frac{\h^s(A_k)}{\h^s(F_{\eta_k})}=\widetilde{c}$ (Proposition~\ref{prop:linearityofpreimage}), we have 
    \begin{equation}\label{eq:ak1losesmass}
    	\frac{\h^s(A_{k,1})}{\h^s(F_{\eta_k})} \leq \frac{\h^s(A_k)-\h^s(A_{k,2})}{\h^s(F_{\eta_k})} \leq \widetilde{c}-\frac{\underline{r}^s\Delta_F^s\h^s(K)}{L^s\h^s(F)}<\widetilde{c}.
    \end{equation}

    Similarly as before, the restriction $\widetilde{f}:A_{k,1}\to F_{\eta_k}$ naturally generates a Lipschitz map 
    \begin{align*}
    	g_k: \psi_{\eta_k}^{-1}(A_{k,1}) &\to F \\
    	\psi_{\eta_k}^{-1}(x) &\mapsto \psi_{\eta_k}^{-1}\widetilde{f}(x)
    \end{align*}
    with $\lip(g_k)\leq\lip(\widetilde{f})\leq L$. Since $\psi_{\eta_k}^{-1}(A_{k,1})\in\mathcal{K}$, we can find by Lemma~\ref{lem:takinglimits} a set $B\in \mathcal{K}$ and a Lipschitz map $g:B \to F$ with $\lip(g)\leq L$ such that 
    \[
        \h^s(B) \leq \sup_k\h^s(\psi_{\eta_k}^{-1}(A_{k,1})) = \h^s(F)\cdot\sup_k \frac{\h^s(A_{k,1})}{\h^s(F_{\eta_k})} < \widetilde{c}\h^s(F)
    \]
    (where the last inequality is due to~\eqref{eq:ak1losesmass}) and 
    \begin{align*}
    	\h^s(g(B)) &\geq \limsup_{k\to\infty} \h^s(g_k(\psi_{\eta_k}^{-1}(A_{k,1}))) \\
    	&= \limsup_{k\to\infty} \h^s(\psi_{\eta_k}^{-1}\widetilde{f}(A_{k,1})) &&\text{(by definition)} \\
    	&= \h^s(F) \cdot \limsup_{k\to\infty} \frac{\h^s(\psi_{\eta_k}^{-1}\widetilde{f}(A_{k,1}))}{\h^s(F)} \\
    	&= \h^s(F).  && \text{(by~\eqref{eq:limitofecapfetakcor})}
    \end{align*}
    The latter implies that $g(B)=F$, i.e., $g: B\to F$ is a Lipschitz surjection with $\lip(g)\leq L$. However, it follows from $B\in\mathcal{K}$ and our choice of $\widetilde{K}$ (recall Corollary~\ref{lem:infoflcellstof}) that  
    \[
    \widetilde{c}\h^s(F) > \h^s(B) \geq \h^s(\widetilde{K}) = \widetilde{c}\h^s(F),
    \]
    which is a contradiction. This proves (1).  
\end{proof}

%-----------------------------------------
\section{Lipschitz equivalence via the homogeneity of $F$}

In this section, we restrict our attention to the case when $\Psi$ is homogeneous, say $\lambda_j\equiv\lambda$ for $j\in J$, and establish Theorem~\ref{thm:main2}(1). By Lemma~\ref{lem:logdepimpliesequi}, the proof again reduces to showing that $\frac{\log r_i}{\log \lambda}\in\mathbb{Q}$ for $i\in I$. The following key observation will enable us to, in some sense, interchange the roles of $K$ and $F$ in adapting the proof of Corollary~\ref{cor:bk24}.

\begin{proposition}\label{prop:tildefisnottoosca}
    There exists $\alpha\in\Z^+$ such that for all $1\leq t\leq p$, $n\geq 1$ and $\i\in I'_{t,n}$, $\widetilde{f}(a_tO_tK_{\i}+b_t)$ contains at least one cell in $\{F_{\j}: \j\in J_{n+\alpha}\}$.
\end{proposition}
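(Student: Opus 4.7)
The plan is to argue by contradiction via rescaling, compactness, and the minimality of $\widetilde K$. Assume the conclusion fails; then for each $\alpha\in\mathbb Z^+$ we find $(t_\alpha,n_\alpha,\i_\alpha)$ with $\i_\alpha\in I'_{t_\alpha,n_\alpha}$ such that $\widetilde f(a_{t_\alpha}O_{t_\alpha}K_{\i_\alpha}+b_{t_\alpha})$ contains no cell of $\{F_\j:\j\in J_{n_\alpha+\alpha}\}$. Let $\j_\alpha\in J_{n_\alpha}$ be the unique word with $\widetilde f(a_{t_\alpha}O_{t_\alpha}K_{\i_\alpha}+b_{t_\alpha})\subset F_{\j_\alpha}$ (Lemma~\ref{lem:factsinq2}(3)) and blow up by $\psi_{\j_\alpha}^{-1}$ to define
\[
  g_\alpha:E_\alpha\to F,\qquad g_\alpha(x):=\psi_{\j_\alpha}^{-1}\bigl(\widetilde f(\psi_{\j_\alpha}(x))\bigr),
\]
where $E_\alpha:=\psi_{\j_\alpha}^{-1}\bigl(\widetilde f^{-1}(F_{\j_\alpha})\bigr)\in\mathcal K$, $\h^s(E_\alpha)=\widetilde c\,\h^s(F)=\h^s(\widetilde K)$ by Proposition~\ref{prop:linearityofpreimage}, and $g_\alpha$ is a Lipschitz surjection with $\lip(g_\alpha)\le L$. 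Set $\mathrm{sp}_\alpha:=\psi_{\j_\alpha}^{-1}(a_{t_\alpha}O_{t_\alpha}K_{\i_\alpha}+b_{t_\alpha})$, one of the pieces of $E_\alpha$. The failure hypothesis forces $E_\alpha\ne\mathrm{sp}_\alpha$ (otherwise $\widetilde f$ would send that single cell onto all of $F_{\j_\alpha}$, trivially containing every sub-cell), so $E_\alpha$ has at least two pieces. Using the homogeneity $\lambda_j\equiv\lambda$, the rescaling converts the failure into: $g_\alpha(\mathrm{sp}_\alpha)$ contains no $F$-cell of length at most $L_\alpha$, where $L_\alpha\to\infty$.

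Apply Lemma~\ref{lem:takinglimits} together with Remark~\ref{rem:eventuallydense} (using the surjectivity of each $g_\alpha$) to extract a subsequence and obtain a limit $g:E\to F$ with $E\in\mathcal K$ still having at least two pieces, $\lip(g)\le L$, $\h^s(E)=\h^s(\widetilde K)$ and $g(E)=F$. Since the pieces of $E_\alpha$ converge individually, $\mathrm{sp}_\alpha\to\mathrm{sp}$ and $E_\alpha\setminus\mathrm{sp}_\alpha\to E\setminus\mathrm{sp}$ in Hausdorff distance, with $E\setminus\mathrm{sp}\in\mathcal K$ non-empty and $\h^s(E\setminus\mathrm{sp})<\h^s(E)=\h^s(\widetilde K)$. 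Crucially, $(E,g)$ is itself a minimizing pair realizing the infimum in Corollary~\ref{lem:infoflcellstof}, so Proposition~\ref{prop:linearityofpreimage} and Lemma~\ref{lem:disjointness} apply verbatim to $(E,g)$ in place of $(\widetilde K,\widetilde f)$.

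Two opposing facts about $g(\mathrm{sp})$ drive the contradiction. First, $g(\mathrm{sp})$ contains some $F$-cell $F_{\eta^*}$: otherwise $g(\mathrm{sp})$ would be nowhere dense in $F$, forcing the dense set $F\setminus g(\mathrm{sp})$ into the closed set $g(E\setminus\mathrm{sp})$ so that $g(E\setminus\mathrm{sp})=F$; this would supply a Lipschitz surjection from $E\setminus\mathrm{sp}\in\mathcal K$ to $F$ with $\h^s(E\setminus\mathrm{sp})<\h^s(\widetilde K)$, contradicting the minimality of $\widetilde K$. Second, this is strengthened to the \emph{preimage} statement $g^{-1}(F_{\eta^*})\subset\mathrm{sp}$: the level-$|\eta^*|$ decomposition of $g^{-1}(F_{\eta^*})$ (via Lemma~\ref{lem:disjointness} applied to $(E,g)$) consists of cells whose $g$-images have positive $\h^s$-measure (Lemma~\ref{lem:disjointness}(1)); if one such cell sat in a piece other than $\mathrm{sp}$, its image would lie in $g(\text{that piece})\cap g(\mathrm{sp})$ (via $F_{\eta^*}\subset g(\mathrm{sp})$), which has measure zero by Lemma~\ref{lem:disjointness}(2)---a contradiction.

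The preimage containment now transfers back to the sequence: if $x_\alpha\in g_\alpha^{-1}(F_{\eta^*})\cap(E_\alpha\setminus\mathrm{sp}_\alpha)$ for infinitely many $\alpha$, compactness extracts a subsequential limit $x^*\in E\setminus\mathrm{sp}$ with $g(x^*)\in F_{\eta^*}$ (using uniform convergence of the Lipschitz extensions from Lemma~\ref{lem:takinglimits}), contradicting the previous step. Hence $g_\alpha^{-1}(F_{\eta^*})\subset\mathrm{sp}_\alpha$ for all large $\alpha$, and the surjectivity of $g_\alpha$ then forces $F_{\eta^*}\subset g_\alpha(\mathrm{sp}_\alpha)$, contradicting the failure hypothesis as soon as $L_\alpha>|\eta^*|$. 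The principal obstacle is that ``$F_{\eta^*}\subset g(\mathrm{sp})$'' does not pass directly back to ``$F_{\eta^*}\subset g_\alpha(\mathrm{sp}_\alpha)$''---cell-containment of closed sets is not automatic under Hausdorff convergence---so the key trick is to strengthen image containment into preimage containment, which, being a statement about preimages under equicontinuous maps, does transfer cleanly to the sequence.
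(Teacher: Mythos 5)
Your argument is correct, and it takes a genuinely different route from the paper. The paper also argues by contradiction and blows up a ``bad'' cell, but its engine is combinatorial: it shows the image of the bad piece meets a positive proportion of the sub-cells at the critical level, invokes Szemer\'edi's theorem to extract a long arithmetic progression of such sub-cells, locates a cell $F_{\j_k\theta_k}$ in which the image is Hausdorff-dense while the preimage inside the bad piece carries only a $(1-c_*)$-fraction of the mass, and then contradicts the minimality of $\h^s(\widetilde K)$ by one application of Lemma~\ref{lem:takinglimits} and Remark~\ref{rem:eventuallydense}. You avoid Szemer\'edi entirely: you pass to the limit first, observe that $(E,g)$ is again a minimizing pair (so Proposition~\ref{prop:linearityofpreimage} and Lemma~\ref{lem:disjointness} transfer to it --- this is legitimate, since their proofs use only membership in $\mathcal K$, surjectivity with $\lip\le L$, and attainment of the infimum), and then run a topological dichotomy on $g(\mathrm{sp})$: either it is nowhere dense, in which case $E\setminus\mathrm{sp}$ already surjects onto $F$ with strictly smaller measure (contradiction with minimality), or it contains a cell $F_{\eta^*}$, in which case the almost-injectivity of Lemma~\ref{lem:disjointness} upgrades this to the preimage statement $g^{-1}(F_{\eta^*})\subset\mathrm{sp}$. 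Your closing observation is the real point of divergence and is exactly right: image-containment of a cell does not pass back along Hausdorff convergence, but preimage-containment does, via the equicontinuity of the Kirszbraun extensions in Lemma~\ref{lem:takinglimits}; this is what lets you contradict the failure hypothesis at finite $\alpha$. The trade-off is that the paper's route is quantitative and self-contained within one limiting step, while yours is more elementary (no Szemer\'edi) but leans harder on re-running the Section~4 machinery for the limit pair and on the back-transfer argument.

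Two small points you should make explicit when writing this up: replace $\eta^*$ by a sub-cell lying in some cut $J_m$ so that the decomposition of $g^{-1}(F_{\eta^*})$ in Lemma~\ref{lem:tworeofa} and the two parts of Lemma~\ref{lem:disjointness} apply at a definite level; and in the last step, record that $F_{\j_\alpha\eta^*}\subset\widetilde f(a_{t_\alpha}O_{t_\alpha}K_{\i_\alpha}+b_{t_\alpha})$ yields a cell of $J_{n_\alpha+\alpha}$ inside that image once $\alpha$ exceeds a threshold depending only on $|\eta^*|$, $\delta$ and $\lambda$ (appending letters to $\j_\alpha\eta^*$ until the ratio first drops below $\delta_{n_\alpha+\alpha}$). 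Neither is a gap.
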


In other words, the Lipschitz surjection $\widetilde{f}$ is not distributed excessively fragmented in $F$. Since the proof of this proposition is rather involved, let us prove Theorem~\ref{thm:main2}(1) upon it first and then check its validation in the next subsection. 

%------------------------------------
\subsection{From Proposition~\ref{prop:tildefisnottoosca} to Theorem~\ref{thm:main2}(1)}
Assume Proposition~\ref{prop:tildefisnottoosca} holds and let $i\in I$ be arbitrary.

For $n\geq 1$, let $k_n$ be the unique integer such that $i^{k_n}\in I'_{1,n}$. Proposition~\ref{prop:tildefisnottoosca} provides a word $\j_n\in J_{n+\alpha}$ with $F_{\j_n}\subset\widetilde{f}(a_1O_1K_{i^{k_n}}+b_1)$. By Lemma~\ref{lem:tworeofa}, we can write
\begin{equation}\label{eq:muniontosinglecell}
    \widetilde{f}^{-1}(F_{\j_n}) = \bigcup_{t=1}^p\bigcup_{\i\in\mathcal{I}_t} (a_tO_tK_{\i}+b_t),
\end{equation}
where $\mathcal{I}_t\subset I'_{t,n+\alpha}$. 
For $1\leq t\leq p$, if $\mathcal{I}_{t}\not=\emptyset$ and $\i\in\mathcal{I}_{t}$, then letting $\eta$ be the prefix of $\i$ with $\eta\in I'_{t,n}$, we have
\begin{align*}
    \h^s(\widetilde{f}(a_1O_1K_{i^{k_n}}+b_1)\,\cap\, &\widetilde{f}(a_{t}O_{t}K_\eta+b_{t})) \\
    &\geq \h^s(F_{\j_n}\cap\widetilde{f}(a_{t}O_{t}K_{\i}+b_{t})) \\
    &= \h^s(\widetilde{f}(a_{t}O_{t}K_{\i}+b_{t})) && \text{(by~\eqref{eq:muniontosinglecell})} \\
    &>0 && \text{(by Lemma~\ref{lem:disjointness}(1))}.
\end{align*}
By Lemma~\ref{lem:disjointness}(2), this is impossible unless $t=1$ and $\eta=i^{k_n}$. So~\eqref{eq:muniontosinglecell} should be 
\begin{equation}\label{eq:massoffjninver}
    \widetilde{f}^{-1}(F_{\j_n}) = \bigcup_{\i\in\mathcal{I}_1:K_{\i}\subset K_{i^{k_n}}} (a_1O_1K_{\i}+b_1).
\end{equation}

Note that $\{\i\in\mathcal{I}_1:K_{\i}\subset K_{i^{k_n}}\}$ can be written as $\{i^{k_n}\omega:\omega\in\Gamma_n\}$ for some $\Gamma_n\subset I^*$. Since $i^{k_n}\in I'_{1,n}$ and $\mathcal{I}_1\subset I'_{1,n+\alpha}$, $a_1r_i^{k_n}\approx\delta^n$ and $a_1r_{i}^{k_n}r_\omega\approx\delta^{n+\alpha}$ for all $\omega\in\Gamma_n$. So there exists an integer $q$ independent of $n$ such that $\Gamma_n\subset\bigcup_{k=1}^q I^k$ for all $n$. By the pigeonhole principle, we may choose $n_1<n_2$ such that $\Gamma_{n_1}=\Gamma_{n_2}$. Since $\Psi$ is homogeneous, there are integers $z_1,z_2$ such that $J_{n_1+\alpha}=J^{z_1}$ and $J_{n_2+\alpha}=J^{z_2}$. Recalling Proposition~\ref{prop:linearityofpreimage} and~\eqref{eq:massoffjninver}, we have 
\[
    \widetilde{c} = \frac{\h^s(\widetilde{f}^{-1}(F_{\j_{n_\ell}}))}{\h^s(F_{\j_{n_\ell}})} = \frac{\sum_{\omega\in\Gamma_{n_\ell}} \h^s(a_1O_1K_{i^{k_{n_\ell}}\omega}+b_1)}{\h^s(F_{\j_{n_\ell}})} = \frac{a_1^sr_i^{k_{n_\ell}s}\sum_{\omega\in\Gamma_{n_\ell}} \h^s(K_\omega)}{\lambda^{z_\ell s}\h^s(F)}
\]
for $\ell=1,2$. Since $\Gamma_{n_1}=\Gamma_{n_2}$, we have $r_i^{k_{n_1}-k_{n_2}}=\lambda^{z_1-z_2}$, i.e., $\frac{\log r_i}{\log\lambda}\in\mathbb{Q}$ (choosing $n_1,n_2$ with a large difference, we can require that $z_1\neq z_2$). Since $i$ is arbitrary, Theorem~\ref{thm:main2} is proved.

%-------------------------------------
\subsection{Proof of Proposition~\ref{prop:tildefisnottoosca}}

The purpose of this subsection is to prove Proposition~\ref{prop:tildefisnottoosca} by contradiction. The following landmark result of Szemer\'edi will be useful when establishing a denseness property later in our argument. 

\begin{lemma}[\cite{Sze75}]
    Let $\gamma>0$. For every $k\geq 1$, there exists a threshold number $n(k)$ such that for $n \geq n(k)$, every subset of $\{1, 2, \ldots, n\}$ with cardinality larger than $\gamma n$ contains a $k$-term arithmetic progression.
\end{lemma}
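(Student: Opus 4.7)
The plan is to follow Furstenberg's ergodic-theoretic approach, which converts the combinatorial assertion into a statement about multiple recurrence in measure-preserving systems. First, via the Furstenberg correspondence principle, I would reduce the problem to the following: for every invertible probability-measure-preserving system $(X,\mathcal{B},\mu,T)$ and every measurable $A \subset X$ with $\mu(A) > 0$ and every integer $k \geq 1$, there exists $n \geq 1$ such that
\[
\mu\bigl(A \cap T^{-n}A \cap T^{-2n}A \cap \cdots \cap T^{-(k-1)n}A\bigr) > 0.
\]
To extract the combinatorial conclusion from this, one builds the orbit closure of the indicator $1_S \in \{0,1\}^{\Z}$ of a set $S$ of upper density $\gamma$ and passes to a weak-$*$ limit along a F{\o}lner sequence of intervals; the resulting shift-invariant measure assigns mass at least $\gamma$ to the cylinder $\{\omega : \omega_0 = 1\}$, and an intersection of positive measure with its $T^{-n}, \ldots, T^{-(k-1)n}$ translates produces a $k$-term progression inside $S$.

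The multiple recurrence statement I would then establish by a transfinite induction on the ``SZ-complexity'' of the system, leveraging the Furstenberg structure theorem, which represents every ergodic system as a tower of compact extensions built above its Kronecker factor with at most a weakly mixing extension on top. The induction has three principal steps: (i) verify multiple recurrence for compact group rotations, where a density-$\gamma$ set must return simultaneously via a Bohr-set/equidistribution argument; (ii) show that weakly mixing extensions preserve multiple recurrence, using a van der Corput estimate to force the off-diagonal multiple correlations to vanish in Ces{\`a}ro mean so that only the product of integrals survives; (iii) show that compact extensions preserve multiple recurrence, by approximating arbitrary $L^2$ functions on the extension by fiberwise almost-periodic pieces and combining these with the inductive hypothesis on the base to align the $k$ shifts at a common $n$. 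Closure under countable inverse limits follows from a compactness argument in the space of shift-invariant measures, completing the induction.

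The main obstacle is step (iii), the compact-extension case, which is where the combinatorial depth of Szemer\'edi's theorem concentrates. Here one must build a conditional Hilbert-module basis over the factor, use it to express functions on the extension as $L^2$-convergent series of fiberwise trigonometric polynomials, and then run a Ramsey-type pigeonhole over the approximate return times supplied by the inductive hypothesis to extract a single integer $n$ for which all $k$ translates simultaneously intersect in positive measure. A secondary difficulty is reducing to ergodic systems: after invoking the ergodic decomposition one must argue that the recurrence parameter $n$ can be located uniformly on a positive-measure set of ergodic components, which requires a selection/measurability argument that is not entirely routine.
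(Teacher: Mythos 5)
The paper does not prove this lemma; it is invoked as Szemer\'edi's theorem and cited directly from~\cite{Sze75}, where the argument is Szemer\'edi's original combinatorial one built on a graph-regularity--type decomposition. Your outline instead follows Furstenberg's ergodic-theoretic route: correspondence principle, Furstenberg--Zimmer structure theorem, and the three-step induction over weakly mixing and compact extensions. As a high-level plan this is a correct and classical alternative proof, and the comparison is the standard one: Szemer\'edi's method is elementary and quantitative in principle (though with tower-type bounds), while Furstenberg's is conceptually cleaner and generalizes readily (multidimensional, polynomial, and density Hales--Jewett variants), but is intrinsically non-effective and does not even in principle yield the explicit threshold $n(k)$ that the lemma's statement promises; one only gets existence of such a threshold by a soft compactness/contradiction argument.

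One step you gloss over and should make explicit, because the lemma as stated is \emph{finitary}: the correspondence principle produces the conclusion for sets of positive upper Banach density in $\Z$, and you must then pass from that infinitary statement back to the existence of a finite $n(k)$. This is done by contradiction --- if no such $n(k)$ existed, one would have, for each $n$, a progression-free $S_n\subset\{1,\dots,n\}$ with $\#S_n>\gamma n$; concatenating suitably spaced translates of the $S_n$ (or taking a pointwise limit of $1_{S_n}$ along a F{\o}lner sequence in $\{0,1\}^{\Z}$) produces an infinite set of upper density $\geq\gamma$ with no $k$-term progression, contradicting the infinitary theorem. Without spelling this out the proposal proves a different (infinitary) statement than the one in the box. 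Beyond that, the three substantive steps (i)--(iii) are of course each entire papers' worth of work, but your identification of where the difficulty lies --- the compact-extension case and the measurable-selection issue in the ergodic decomposition --- is accurate.
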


Since $\Psi$ is homogeneous, for each $n$ we can assign a unique integer $\tau(n)$ such that $J_n=J^{\tau(n)}$. For $1\leq t\leq p$ and $n\geq 1$, let
\begin{equation}\label{eq:defofptn}
   q(t,n) := \max\{q\geq 1: \exists \i\in I'_{t,n} \text{ such that $F_{\j}\nsubseteq\widetilde{f}(a_tO_tK_{\i}+b_t)$ for all $\j\in J^{\tau(n)+q}$}\}.
\end{equation}
Suppose on the contrary that Proposition~\ref{prop:tildefisnottoosca} fails. Then $\max_{1\leq t\leq p}\sup_n q(t,n)=\infty$. Without loss of generality, assume that $\sup_{n}q(1,n)=\infty$.

Let $\gamma\in(0,1)$ be a fixed constant (will be specified later). For $k\geq (\# J)^4$, by Szemer\'edi's theorem, there is a large integer $n_k$ such that any subset of $\{1,\ldots,(\#J)^{q(1,n_k)}\}$ with cardinality $\geq \gamma(\#J)^{q(1,n_k)}$ contains a $k$-term arithmetic progression. For notational simplicity, in the rest of the proof we write $q_k:=q(1,n_k)$ and enumerate $J^{q_k}$ as $\{\eta_{k,t}: 1\leq t\leq (\# J)^{q_k}\}$ according to the lexicographical order, that is, 
\[
    \eta_{k,1} = \underbrace{1\cdots 1}_{q_k \text{ terms}},\, \eta_{k,2} = \underbrace{1\cdots 1}_{q_{k}-1 \text{ terms}}\!\!\!\!2,\, \ldots,\, \eta_{k,(\# J)^{q_k}} = \underbrace{(\# J)\cdots(\# J)}_{q_k \text{ terms}}.
\]
It is helpful to note that for any $\j\in J^*$ with $|\j|\leq q_k$, 
\begin{equation}\label{eq:standardsubtree}
    \{t: F_{\eta_{k,t}}\subset F_{\j}\} = \{z(\#J)^{q_k-|\j|}+1, z(\#J)^{q_k-|\j|}+2, \ldots, (z+1)(\#J)^{q_k-|\j|}\}
\end{equation}
for some integer $z\geq 0$, i.e., a $(\#J)$-adic interval of level-$(q_k-|\j|)$.

By~\eqref{eq:defofptn}, one can find a word $\i_k\in I'_{1,n_k}$ with 
\begin{equation}\label{eq:notcontainanyword}
    F_{\j}\nsubseteq\widetilde{f}(a_1O_1K_{\i_k}+b_1), \quad \forall\j\in J^{\tau(n_k)+q_k}.
\end{equation}
On the other hand, similarly as in~\eqref{eq:fimageinonecell}, one can deduce that $\widetilde{f}(a_1O_1K_{\i_k}+b_1)\subset F_{\j_k}$ for some $\j_k\in J_{n_k}=J^{\tau(n_k)}$. Since 
\[
     F_{\j_k} = \bigcup_{\theta\in J^{q_k}} F_{\j_k\theta}=\bigcup_{1\leq t\leq (\#J)^{q_k}} F_{\j_k\eta_{k,t}},
\]
we have
\begin{align}
    \widetilde{f}(a_1O_1K_{\i_k}+b_1)&= \widetilde{f}(a_1O_1K_{\i_k}+b_1) \cap \bigcup_{1\leq t\leq (\#J)^{q_k}} F_{\j_k\eta_{k,t}} \notag \\
    &= \widetilde{f}(a_1O_1K_{\i_k}+b_1) \cap \bigcup_{t\in T_k} F_{\j_k\eta_{k,t}}, \label{eq:imagesupset}
\end{align} 
where $T_k:=\{1\leq t\leq (\#J)^{q_{k}}: \widetilde{f}(a_1O_1K_{\i_k}+b_1) \cap F_{\j_k\eta_{k,t}}\neq\varnothing \}$. Observe that
\begin{align*}
    \delta^{n_ks} &\lesssim \h^s(\widetilde{f}(a_1O_1K_{\i_k}+b_1))  &&\text{(since $\i_k\in I'_{1, n_k}$)} \\
    &\leq \h^s\Big( \bigcup_{t\in T_k} F_{\j_k\eta_{k,t}} \Big)  &&\text{(by~\eqref{eq:imagesupset})} \\
    &= \sum_{t\in T_k} \h^s(F_{\j_k\eta_{k,t}}) \\
    &= \sum_{t\in T_k} \lambda_{\j_k}^s\h^s(F_{\eta_{k,t}}) \\
    &\lesssim (\# T_k) \cdot \delta^{n_ks} \cdot \lambda^{q_ks}\h^s(F)  &&\text{(since $\j_k\in J_{n_k}$ and $\eta_{k,t}\in J^{q_k}$)} 
\end{align*}
and hence $\#T_k\gtrsim \lambda^{-q_ks}=(\#J)^{q_k}$. In summary, although every cell in $\{F_{\j_k\theta}:\theta\in J^{q_k}\}$ is not entirely contained in $\widetilde{f}(a_1O_1K_{\i_k}+b_1)$, the latter does intersect a large proportion of these cells.

Picking $0<\gamma<1$ properly at the beginning, we have $\#T_k\geq \gamma(\#J)^{q_k}$ for all $k$. As a consequence, $T_k$ should contain a $k$-term arithmetic progression, say $AP_k$. Pick $\theta_k\in J^*$ such that $F_{\theta_k}$ is a largest cell contained entirely in $\bigcup_{t=\min AP_k}^{\max AP_k} F_{\eta_{k,t}}$  (so $|\theta_k|\leq q_k$) and set 
\begin{equation}\label{eq:defofpprimek}
    AP'_k := \{t\in AP_k: F_{\eta_{k,t}}\subset F_{\theta_k}\},
\end{equation}
which is a subprogression of $AP_k$. By~\eqref{eq:standardsubtree}, $\{1\leq t\leq (\#J)^{q_k}: F_{\eta_{k,t}}\subset F_{\theta_k}\}$ is simply a $\#J$-adic interval of the form
\begin{equation}\label{eq:alltcontinthetak}
    \{z_k(\#J)^{q_k-|\theta_k|}+1,z_k(\#J)^{q_k-|\theta_k|}+2,\ldots,(z_k+1)(\#J)^{q_k-|\theta_k|}\} 
\end{equation}
for some $z_k\geq 0$. Write $d_k$ to be the common difference of $AP_k$. Clearly,
\begin{equation}\label{eq:cardofpprimek}
	\# AP'_k \geq \frac{\#\{1\leq t\leq (\#J)^{q_k}: F_{\eta_{k,t}}\subset F_{\theta_k}\}}{d_k}-1 = \frac{(\#J)^{q_k-|\theta_k|}}{d_k}-1.
\end{equation}

%It is straightforward to obtain the following lower bound estimate of $\# AP_k'$.
\begin{lemma}\label{lem:APcardEst}
	$\#AP'_k\geq (\#J)^{-3}k$.
\end{lemma}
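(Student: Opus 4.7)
The plan is to translate the lemma into a combinatorial statement about $\#J$-adic subintervals of integers and then close with a pigeonhole argument. Write $b := \#J$ for brevity. By~\eqref{eq:standardsubtree}, a cell $F_{\j}$ satisfying $F_{\j}\subset \bigcup_{t\in[\min AP_k,\max AP_k]} F_{\eta_{k,t}}$ corresponds bijectively to a $b$-adic subinterval of $[\min AP_k,\max AP_k]$ of length $b^{q_k-|\j|}$. Consequently, $F_{\theta_k}$ being a largest such cell means that $\ell := q_k - |\theta_k|$ is the largest integer $m$ for which some level-$m$ $b$-adic interval (i.e., of length $b^m$) fits inside $[\min AP_k,\max AP_k]$.

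The key combinatorial input is a simple pigeonhole claim: any interval of $N \geq 2b^m - 1$ consecutive integers contains a level-$m$ $b$-adic subinterval. Indeed, the starting positions of level-$m$ intervals are spaced exactly $b^m$ apart, so any $b^m$ consecutive integers contain one such starting position $s$, and the remaining $b^m - 1$ integers available guarantee that $[s, s+b^m-1]$ fits. Applied contrapositively with $N = (k-1)d_k + 1$ and the maximality of $\ell$, this rules out any level-$(\ell+1)$ interval inside $[\min AP_k, \max AP_k]$, forcing
\[
    (k-1)d_k + 1 < 2b^{\ell+1} - 1, \quad\text{i.e.,}\quad b^\ell > \frac{(k-1)d_k}{2b}.
\]
Combining with~\eqref{eq:cardofpprimek}, I obtain
\[
    \#AP'_k \geq \frac{b^\ell}{d_k} - 1 > \frac{k-1}{2b} - 1.
\]

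It remains to check the elementary inequality $(k-1)/(2b) - 1 \geq k/b^3$ under the standing hypotheses $b \geq 2$ and $k \geq b^4$. Clearing denominators rearranges this to $k(b^2 - 2) \geq b^2(2b+1)$, and a direct verification shows $b^4(b^2 - 2) \geq b^2(2b+1)$ for every $b \geq 2$ (the tightest case $b = 2$ requires only $k \geq 10$, which is dominated by $b^4 = 16$). The only delicate point in the whole argument is the off-by-one bookkeeping for the $b$-adic pigeonhole claim; the translation from cells of $F$ to integer intervals is essentially a re-reading of~\eqref{eq:standardsubtree}, and the final numerical estimate is routine. I note in passing that this plan yields a slightly stronger constant than $(\#J)^{-3}$, but the weaker form stated in the lemma absorbs constants cleanly and is all that is needed in the sequel.
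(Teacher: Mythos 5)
Your proof is correct and follows essentially the same route as the paper: both arguments rest on the pigeonhole fact that an integer interval of length roughly $2(\#J)^m$ must contain a level-$m$ $(\#J)$-adic subinterval, combined with the maximality of $\theta_k$ and the bound~\eqref{eq:cardofpprimek}; you merely apply this fact in contrapositive form at level $q_k-|\theta_k|+1$, whereas the paper exhibits a $(\#J)$-adic interval at level $\lfloor\log((k-1)d_k)/\log\#J\rfloor-1$ directly. The resulting constant $\frac{k-1}{2\#J}-1$ is in fact marginally sharper than the paper's $\frac{k-1}{(\#J)^2}-1$, and your final numerical check under $k\geq(\#J)^4$ is accurate.
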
 
\begin{proof}
	Write $\ell_k:=\lfloor \frac{\log(k-1)d_k}{\log\#J}\rfloor$. Then
	\[
  	   \max AP_k - \min AP_k  =  (\# AP_k-1)d_k = (k-1)d_k\geq (\# J)^{\ell_k} \geq 2\cdot(\#J)^{\ell_k-1}.
	\]
	Therefore, the interval $[\min AP_k, \max AP_k]$ must contain a $\# J$-adic interval of the form 
	\begin{equation*}
		\big\{z(\#J)^{\ell_k-1}+1, \ldots, (z+1)(\#J)^{\ell_k-1}\big\}
	\end{equation*}
	for some $z\geq 0$. By~\eqref{eq:standardsubtree}, this set corresponds to $\{t: F_{\eta_{k,t}}\subset F_{\j}\}$ for some $\j\in J^{q_k-(\ell_k-1)}$. In particular, $F_{\j}\subset \bigcup_{t=\min AP_k}^{\max AP_k} F_{\eta_{k,t}}$. 
	Recalling the choice of $\theta_k$ (above~\eqref{eq:defofpprimek}), we have $|\theta_k|\leq |\j|=q_k-(\ell_k-1)$ and it follows from~\eqref{eq:cardofpprimek} that 
	\begin{align*}
		\# AP'_k & \geq  \frac{(\# J)^{\ell_k-1}}{d_k}-1  \\
		&= (\#J)^{-1}\cdot(\#J)^{\lfloor \frac{\log(k-1)d_k}{\log\#J}\rfloor}\cdot d_k^{-1} - 1 \\
		&\geq (\#J)^{-2}(k-1)d_k\cdot d_k^{-1}-1 \\
		&> k(\#J)^{-2} - 2,
	\end{align*} 
	which is at least $k(\#J)^{-3}$ because $k\geq (\#J)^4$.
\end{proof}

To derive a contradiction, write 
\begin{equation}\label{eq:secondak1}
    A_{k,1} := \widetilde{f}^{-1}(F_{\j_k\theta_k}) \cap (a_1O_1K_{\i_k}+b_1).
\end{equation}

\begin{lemma}\label{lem:lastlem}
    The compact set $A_{k,1}$ satisfies the following two properties.
    \begin{enumerate}
        \item $\psi_{\j_k\theta_k}^{-1}\widetilde{f}(A_{k,1})$ converges to $F$ in the Hausdorff distance as $k\to\infty$.
        \item There exists $0<c_*<1$ such that $\h^s(A_{k,1})\leq (1-c_*)\h^s(\widetilde{f}^{-1}(F_{\j_k\theta_k}))$ for all $k$. 
    \end{enumerate}
\end{lemma}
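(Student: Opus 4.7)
The plan is to establish (1) by a combinatorial density argument on the arithmetic progression $AP'_k$, and to derive (2) almost directly from Lemma~\ref{lem:tworeofa} combined with the failure hypothesis~\eqref{eq:notcontainanyword}.

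For part (1), set $m_k:=q_k-|\theta_k|$. For each $t\in AP'_k\subset AP_k\subset T_k$, the intersection $\widetilde{f}(a_1O_1K_{\i_k}+b_1)\cap F_{\j_k\eta_{k,t}}$ is nonempty; since $F_{\eta_{k,t}}\subset F_{\theta_k}$, this intersection lies inside $F_{\j_k\theta_k}$ and hence inside $\widetilde{f}(A_{k,1})$. Writing $\eta_{k,t}=\theta_k\omega_t$ with $\omega_t\in J^{m_k}$, a direct computation yields $\psi_{\j_k\theta_k}^{-1}(F_{\j_k\eta_{k,t}})=F_{\omega_t}$, so the rescaled set $\psi_{\j_k\theta_k}^{-1}\widetilde{f}(A_{k,1})$ meets every $F_{\omega_t}$ with $t\in AP'_k$. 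Under the lex order on $J^{m_k}$, the indices $\{\omega_t:t\in AP'_k\}$ form an arithmetic progression of common difference $d_k$ in $\{1,\ldots,(\#J)^{m_k}\}$, with $d_k\lesssim (\#J)^{m_k}/k$ (by combining Lemma~\ref{lem:APcardEst} with~\eqref{eq:cardofpprimek}). Take $j^*:=\lfloor m_k-\log_{\#J}(5d_k)\rfloor$, which is of order $\log_{\#J}k\to\infty$: every level-$j^*$ cell of $F$ corresponds to an index block of length $(\#J)^{m_k-j^*}\geq 5d_k$, and together with the bookkeeping estimates $\min AP'_k\leq 2d_k$ and $\max AP'_k\geq(\#J)^{m_k}-2d_k$ (both coming from $(\#AP'_k-1)d_k\geq(\#J)^{m_k}-2d_k$), each such block intersects $[\min AP'_k,\max AP'_k]$ in a segment of length $\geq d_k$ and thus contains some $\omega_t$. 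Consequently every level-$j^*$ cell of $F$ meets $\psi_{\j_k\theta_k}^{-1}\widetilde{f}(A_{k,1})$, giving $\hdist(\psi_{\j_k\theta_k}^{-1}\widetilde{f}(A_{k,1}),F)\leq\lambda^{j^*}\to 0$.

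For part (2), set $A_{k,2}:=\widetilde{f}^{-1}(F_{\j_k\theta_k})\setminus(a_1O_1K_{\i_k}+b_1)$, so that $\h^s(A_{k,1})+\h^s(A_{k,2})=\h^s(\widetilde{f}^{-1}(F_{\j_k\theta_k}))$. The first step is to show $A_{k,2}\neq\varnothing$: otherwise $\widetilde{f}(A_{k,1})=\widetilde{f}(\widetilde{f}^{-1}(F_{\j_k\theta_k}))=F_{\j_k\theta_k}$ by surjectivity, forcing $\widetilde{f}(a_1O_1K_{\i_k}+b_1)\supseteq F_{\j_k\theta_k\omega'}$ for every $\omega'\in J^{m_k}$ and contradicting~\eqref{eq:notcontainanyword}. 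Applying Lemma~\ref{lem:tworeofa} to $\j_k\theta_k\in J_N$ (with $N$ determined by $\tau(N)=\tau(n_k)+|\theta_k|$) then decomposes $\widetilde{f}^{-1}(F_{\j_k\theta_k})$ into cells $(a_tO_tK_\i+b_t)$ with $\i\in I'_{t,N}$, so $A_{k,2}$, being a nonempty subunion, inherits $\h^s$-mass at least $\underline{r}^s\delta^{Ns}\h^s(K)$. On the other hand, $\h^s(\widetilde{f}^{-1}(F_{\j_k\theta_k}))=\widetilde{c}\h^s(F_{\j_k\theta_k})\leq\widetilde{c}L^s\delta^{Ns}\h^s(F)/\Delta_F^s$ because $\lambda^{|\j_k\theta_k|}\leq\delta_N=L\delta^N/\Delta_F$. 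Dividing yields a uniform positive lower bound for $\h^s(A_{k,2})/\h^s(\widetilde{f}^{-1}(F_{\j_k\theta_k}))$ independent of $k$; choosing $c_*$ to be the minimum of this ratio and $1/2$ then produces $c_*\in(0,1)$ and the desired estimate.

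The main obstacle is the density step in (1): lexicographically consecutive indices may correspond to cells lying far apart in the IFS tree (due to digit carries), so ``indexwise close'' does not a priori mean ``close in $F$''. The workaround above is to upgrade the comparison from single indices to index blocks of length $\geq 5d_k$, which must intersect any AP of common difference $d_k$ regardless of carries; applying this at level $j^*$ catches one $\omega_t$ per level-$j^*$ cell. Handling the first and last level-$j^*$ blocks cleanly relies on the $\min AP'_k$, $\max AP'_k$ estimates noted above.
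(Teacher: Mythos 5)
Your part (1) is correct and is essentially the paper's own argument: both proofs bound the common difference $d_k$ by roughly $(\#J)^{m_k}/k$ via Lemma~\ref{lem:APcardEst}, and then observe that every $\#J$-adic index block of length exceeding $d_k$ lying in the span of the progression must catch a term of it, so that every cell of $F_{\j_k\theta_k}$ at a level about $\log_{\#J}k$ deeper is met by $\widetilde{f}(A_{k,1})$. Your block-of-length-$5d_k$ bookkeeping together with the estimates $\min AP'_k\leq 2d_k$ and $\max AP'_k\geq(\#J)^{m_k}-2d_k$ is a valid substitute for the paper's device of having chosen $\theta_k$ so that its entire index range sits inside $[\min AP_k,\max AP_k]$.

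Part (2) contains a genuine (though repairable) gap: you invoke Lemma~\ref{lem:tworeofa} for ``$\j_k\theta_k\in J_N$ with $N$ determined by $\tau(N)=\tau(n_k)+|\theta_k|$'', but such an $N$ need not exist. The admissible scales $J_N=J^{\tau(N)}$ are pinned to the geometric sequence $\delta_N=\frac{L}{\Delta_F}\delta^N$, where $\delta$ is merely some small constant fixed in Section~4; when $\delta<\lambda$ the function $\tau$ increases by more than $1$ at each step and skips integers, so $\j_k\theta_k$ in general belongs to no $J_N$ and Lemma~\ref{lem:tworeofa} does not apply to it. Everything downstream (the cell decomposition of $\widetilde{f}^{-1}(F_{\j_k\theta_k})$, hence the uniform lower bound on $\h^s(A_{k,2})/\h^s(\widetilde{f}^{-1}(F_{\j_k\theta_k}))$) rests on this step. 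Two standard repairs: (i) take the smallest $N$ with $\delta_N\leq\lambda_{\j_k\theta_k}$, write $\widetilde{f}^{-1}(F_{\j_k\theta_k})=\bigcup_{\eta\in J_N,\ \j_k\theta_k\prec\eta}\widetilde{f}^{-1}(F_\eta)$ and apply Lemma~\ref{lem:tworeofa} to each $\eta$; since $\delta^N\approx\lambda_{\j_k\theta_k}$ the resulting cells still have mass $\approx\h^s(F_{\j_k\theta_k})$, and one must also record (as you implicitly use) that each such cell is either contained in or disjoint from $a_1O_1K_{\i_k}+b_1$, so that the nonempty set $A_{k,2}$ really contains a whole cell; or (ii) as the paper does, avoid the decomposition altogether: pick a point of $\widetilde{f}^{-1}(F_{\j_k\theta_k})$ outside $a_1O_1K_{\i_k}+b_1$ (it exists by exactly your non-containment argument) and descend inside $K$ until the surrounding cell has diameter $\lesssim\lambda_{\j_k\theta_k}/L$, so that its $\widetilde{f}$-image stays in $F_{\j_k\theta_k}$; this single witness cell has mass $\approx\h^s(F_{\j_k\theta_k})=\widetilde{c}^{-1}\h^s(\widetilde{f}^{-1}(F_{\j_k\theta_k}))$ and yields the same $c_*$. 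With either fix your argument goes through.
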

\begin{proof}
	For (1), write $\widetilde{q}_k:=q_k-|\theta_k|-\lfloor\frac{\log k}{\log\# J}\rfloor+4$. Since $k\geq (\#J)^4$, Lemma~\ref{lem:APcardEst} implies
	\[
	   \#AP_k'-1\geq (\#J)^{-1}\cdot\#AP_k'\geq (\#J)^{-4}k\geq (\#J)^{\lfloor\frac{\log k}{\log\# J}\rfloor-4}
	\]
	and hence
	\begin{align*}
		d_k &\leq \frac{\#\{1\leq t\leq (\# J)^{q_k}: F_{\eta_{k,t}}\subset F_{\theta_k}\}}{\# AP'_k-1} &&\text{(by~\eqref{eq:defofpprimek})} \\
		&= \frac{(\# J)^{q_k-|\theta_k|}}{\# AP'_k-1} &&\text{(by~\eqref{eq:alltcontinthetak})}\\
		&\leq (\# J)^{ q_k-|\theta_k|-\lfloor\frac{\log k}{\log\# J}\rfloor+4} = (\#J)^{\tilde{q}_k}.
	\end{align*}
	So $AP_k$ should intersect every block of $(\#J)^{\widetilde{q}_k}$ consecutive integers in $[\min AP_k, \max AP_k]$ (and in particular, every $\# J$-adic interval of level-$(q_k-\widetilde{q}_k)$ in $[\min AP_k, \max AP_k]$). Since $AP_k\subset T_k$, by the definition of $T_k$ and~\eqref{eq:standardsubtree}, $\widetilde{f}(a_1O_1K_{\i_k}+b_1)$ (and hence $\widetilde{f}(A_{k,1})$) should intersect every level-$(|\j_k|+q_k-\widetilde{q}_k)$ cell contained in $F_{\j_k\theta_k}$. From this, we deduce that 
	\begin{align*}
		\hdist(\widetilde{f}(A_{k,1}), F_{\j_k\theta_k}) \leq \lambda^{|\j_k|+q_k-\widetilde{q}_k} = \lambda_{\j_k}\cdot\lambda^{ |\theta_k|+\lfloor\frac{\log k}{\log\# J}\rfloor-4} = \lambda_{\j_k\theta_k}\cdot \lambda^{ \lfloor\frac{\log k}{\log\# J}\rfloor-4},
	\end{align*}
	where $\hdist$ denotes the Hausdorff distance. Then 
	\[
	    \hdist(\psi^{-1}_{\j_k\theta_k}\widetilde{f}(A_{k,1}), F) = \lambda_{\j_k\theta_k}^{-1}\hdist(\widetilde{f}(A_{k,1}), F_{\j_k\theta_k}) = \lambda^{ \lfloor\frac{\log k}{\log\# J}\rfloor-4},
	\]
	which vanishes as $k\to\infty$. This proves (1).
	
	For (2), note that there are $1\leq t\leq p$, $\i'_k\in I'_{t,n_k}$ with $(t,\i'_k)\neq(1,\i_k)$ such that 
	\begin{equation*}
	    \widetilde{f}(a_tO_tK_{\i'_k}+b_t) \cap F_{\j_k\theta_k} \neq\varnothing;
	\end{equation*}
    otherwise, $F_{\j_k\theta_k}\subset\widetilde{f}(a_1O_1K_{\i_k}+b_1)$, which contradicts~\eqref{eq:notcontainanyword} because $|\theta_k|\leq q_k$. Pick any point $x\in a_tO_tK_{\i'_k}+b_t$ with $\widetilde{f}(x)\in F_{\j_k\theta_k}$. Similarly as in~\eqref{eq:fimageinonecell}, we can find a word $\omega_k\in I^*$ such that $x\in a_tO_tK_{\i'_k\omega_k}+b_t$, $r_{\i'_k\omega_k}\approx \lambda_{\j_k\theta_k}$ and $\widetilde{f}(a_tO_tK_{\i'_k\omega_k}+b_t) \subset F_{\j_k\theta_k}$. Note that 
	\begin{equation}\label{eq:massofikomegak}
		\h^s(a_tO_tK_{\i'_k\omega_k}+b_t) \approx  \h^s(F_{\j_{k}\theta_k}) = \widetilde{c}^{-1}\h^s(\widetilde{f}^{-1}(F_{\j_k\theta_k})),
	\end{equation}
	where the last equality comes from Proposition~\ref{prop:linearityofpreimage}. Then
	\begin{align*}
		\h^s(A_{k,1}) &= \h^s(\widetilde{f}^{-1}(F_{\j_k\theta_k}) \cap (a_1O_1K_{\i_k}+b_1))  &&\text{(by~\eqref{eq:secondak1})} \\
		&\leq \h^s(\widetilde{f}^{-1}(F_{\j_k\theta_k})) - \h^s(a_tO_tK_{\i'_k\omega_k}+b_t) \\
		&\leq (1-c_*)\h^s(\widetilde{f}^{-1}(F_{\j_k\theta_k}))  &&\text{(by~\eqref{eq:massofikomegak})}
	\end{align*}
	for some $0<c_*<1$ independent of $k$, which completes the proof.
\end{proof}

Now we are ready to obtain a contradiction. Again, each Lipschitz inclusion $\widetilde{f}: A_{k,1} \to F_{\j_k\theta_k}$ naturally generates a blow-up map
\begin{align*}
    g_k: \psi_{\j_k\theta_k}^{-1}(A_{k,1}) &\to F \\
    \psi_{\j_k\theta_k}^{-1}(x) &\mapsto \psi_{\j_k\theta_k}^{-1}\widetilde{f}(x)
\end{align*}
with $\lip(g_k)\leq \lip(\widetilde{f})\leq L$. Similar to the fourth paragraph of the proof of Lemma~\ref{lem:disjointness}, we have $\psi_{\j_k\theta_k}^{-1}(A_{k,1})\in\mathcal{K}$ for all $k$. Then, combining Lemma~\ref{lem:takinglimits}, Remark~\ref{rem:eventuallydense} and Lemma~\ref{lem:lastlem}(1), we obtain a Lipschitz surjection $g: E\to F$ for some $E\in\mathcal{K}$ such that $\lip(g)\leq L$ and 
\begin{align*}
    \h^s(E) &\leq \sup_k \h^s(\psi_{\j_k\theta_k}^{-1}(A_{k,1})) \\ 
    &= \h^s(F)\cdot\sup_k \frac{\h^s(A_{k,1})}{\h^s(F_{\j_k\theta_k})} \\
    &\leq \h^s(F)\cdot (1-c_*)\frac{\h^s(\widetilde{f}^{-1}(F_{\j_k\theta_k}))}{\h^s(F_{\j_k\theta_k})} &&\text{(by Lemma~\ref{lem:lastlem}(2))} \\
    &< \widetilde{c}\h^s(F) &&\text{(by Proposition~\ref{prop:linearityofpreimage})} \\
    &= \h^s(\widetilde{K}),
\end{align*} 
which contradicts our choice of $\widetilde{K}$ (recall Lemma~\ref{lem:infoflcellstof}).

%-----------------------------------
\section{A counterexample in the inhomogeneous case}

We finally present a concrete example supporting Theorem~\ref{thm:main2}(2). Following~\cite{XX21}, we say that $\Phi$ is \emph{commensurable} if $\frac{\log r_i}{\log r_{i'}}\in\mathbb{Q}$ for all $i,i'\in I$ and call $r$ the \emph{ratio root}, where $r$ is the unique number in $(0,1)$ such that the multiplicative group generated by $\{r_i:i\in I\}$ equals $r^{\Z}$. Let $\mu$ be the normalized measure of $\h^s$ restricted on $K$. In particular, $\mu(K_{\i})=r_{\i}^s$ for all $\i\in I^*$. 

To state a mass decomposition lemma in~\cite{XX21}, let $\Z[r^s]$ be the ring generated by $r^s$ and the integer set $\Z$. With a slight abuse of notation, here we pick $\delta=r$ and let
\begin{equation}\label{eq:InJnDefLastSection}
    \left\{\begin{array}{l} I_n:= \{\i\in I^*: r_{\i}\leq \delta^n < r_{\i^-}\}, \\ J_n:= \{\j\in J^*: \lambda_{\j} \leq \delta^n < \lambda_{\j^-}\}. \end{array}\right.
\end{equation}

\begin{lemma}[{\cite[Lemma 3.4]{XX21}}]\label{lem:xixiong21}
    Assume that $\Phi$ is commensurable and $r$ is the associated ratio root. Let $\Omega\subset\Z[r^s]\cap(0,+\infty)$ be a finite set. For $k\geq 1$, write $\mathcal{B}_k$ be the collection of all finite union of cells in $\{K_\omega: \omega\in I_k\}$. Then there exists some $c\geq 1$ satisfying the following property: for any $k\geq 1$ and any set $E\in\mathcal{B}_k$, if $\mu(E)=b_1+\cdots+b_n$ for some $b_1,\ldots,b_n\in r^{sk}\Omega$, then we can find a partition $E=\bigcup_{t=1}^n E_t$ such that every $E_t$ is a member in $\mathcal{B}_{k+c}$ with $\mu(E_t)=b_t$.
\end{lemma}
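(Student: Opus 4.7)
The plan is to reduce to a finite algebraic statement about non-negative integer representability in $\Z[r^s]$ and then transport the algebra to cell-level partitions. Writing $r_i=r^{a_i}$ with $a_i\in\Z^+$ (by commensurability), set $a:=\max_{i\in I}a_i$. The SSC gives the identity $\sum_{i\in I}r^{sa_i}=1$, which geometrically says $\mu(K_\sigma)=\sum_{i\in I}\mu(K_{\sigma i})$, so iterated subdivision refines cells without changing their $\mu$-mass. Normalizing by $r^{sk}$, the targets $b_t/r^{sk}$ lie in the fixed finite set $\Omega\subset\Z[r^s]\cap(0,\infty)$, decoupling the problem from the scale $k$.

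The central algebraic claim I would establish first is: there exists $L=L(\Omega,a)\in\N$ such that every $\omega\in\Omega$, and every positive difference $r^{sm}-\omega$ with $m\in\{0,1,\dots,a-1\}$, admits a representation
\[
\omega=\sum_{\ell=0}^{L}n_{\omega,\ell}\,r^{s\ell},\qquad \text{each } n_{\omega,\ell}\in\Z,\ n_{\omega,\ell}\geq 0.
\]
To prove the claim I would begin from any integer representation of $\omega$ in $\Z[r^s]$ and iteratively apply the rewriting $r^{s\ell}\mapsto\sum_{i\in I}r^{s(\ell+a_i)}$, which preserves the value since $\sum_i r^{sa_i}=1$, in order to push positive coefficients deeper and cancel the negative ones. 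Positivity, integrality, and the finiteness of $\Omega$ together force termination at a uniform depth $L$.

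Set $c:=L+a$. Refining each $K_\sigma\in\mathcal{B}_k$ to its descendants in $I_{k+c}$ yields a multiset of sub-cell $\mu$-masses that realize $\mu(K_\sigma)$ as $r^{sk}$ times a non-negative integer combination of $\{r^{s\ell}:0\leq\ell\leq c\}$. To build the partition $E=\bigcup_{t=1}^n E_t$, I would process the summands in order: at stage $t$, use the non-negative representation of $b_t/r^{sk}$ to select a subfamily of the available level-$(k+c)$ sub-cells of the current remainder whose combined $\mu$-mass equals $b_t$, declare their union $E_t$, and pass to the residual $E\setminus(E_1\cup\cdots\cup E_t)$.

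The hard part, and the main obstacle, is the bookkeeping that guarantees the required subfamily is actually present at every stage. This is where including the differences $r^{sm}-\omega$ in the algebraic claim pays off: after removing $E_t$, the $\mu$-mass of the residual inside each affected top-level cell still has a normalized value lying in the finite enlarged set $\widetilde\Omega:=\Omega\cup\{r^{sm}-\omega>0:\omega\in\Omega,\,0\leq m<a\}$, so the same representation lemma re-applies at the next stage with the same constant $c$. Closing $\Omega$ under this finite enlargement (or iterating finitely often until the collection of possible residual values stabilizes) produces one $c$ that works uniformly in $k$, in $n$, and over all admissible decompositions $\mu(E)=b_1+\cdots+b_n$.
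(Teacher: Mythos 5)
First, a point of reference: the paper does not prove this lemma at all --- it is imported verbatim from Xi and Xiong \cite[Lemma 3.4]{XX21} and used as a black box in Example~\ref{exa:lastcountexa} --- so there is no internal argument to compare yours against; your sketch must stand on its own, and in its present form it has two genuine gaps. The first concerns the central algebraic claim. The assertion that the rewriting $r^{s\ell}\mapsto\sum_{i\in I}r^{s(\ell+a_i)}$ can always be driven to a non-negative representation is true, but ``positivity, integrality, and the finiteness of $\Omega$ force termination'' is not an argument, and nothing in your sketch uses the defining property of the ratio root, namely $\gcd_{i\in I}(a_i)=1$. That hypothesis is indispensable: if every $a_i$ were even (so that $r$ is \emph{not} the ratio root), rewriting only moves mass between levels of equal parity, and one checks via the conjugate $r^s\mapsto -r^s$ that a positive element such as $1-r^s=\sum_i r^{sa_i}-r^s$ admits no finite non-negative integer representation in powers of $r^s$. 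A correct proof pushes \emph{every} monomial, positive and negative alike, into a common window $\{r^{sm}: k\le m\le k+a-1\}$ via $r^{s\ell}=\sum_{\i\in I_{k-\ell}}r^{s\ell}r_{\i}^s$, and then uses $\gcd(a_i)=1$ together with the renewal theorem applied to $N_{k,m}:=\#\{\i\in I_k: r_{\i}=r^m\}$ (a renewal sequence with step weights $r^{sa_i}$ summing to $1$) to show each combined coefficient is asymptotic to a positive multiple of $\omega r^{-sm}$, hence eventually a positive integer. This step is recoverable, but it is missing.

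The more serious gap is the bookkeeping, which you correctly identify as the hard part but do not close. The conclusion requires every $E_t$ to lie in $\mathcal{B}_{k+c}$ for a \emph{single} $c$ uniform in $n$, and $n$ is unbounded. Your procedure re-applies the representation lemma to the residual after each extraction; if each re-application costs a further refinement by $c$ levels, you end up with $E_t\in\mathcal{B}_{k+tc}$ and the uniformity is lost. To prevent this you assert that the normalized residual masses inside each top-level cell remain in a finite set, but the enlargement $\widetilde\Omega=\Omega\cup\{r^{sm}-\omega>0\}$ only accounts for removing a single $\omega$ from a single cell, whereas in general several targets $b_t$ each draw part of their mass from the same top-level cell (and a single $b_t$ from several cells), producing residuals of the form $r^{sm}-\omega_1-\cdots-\omega_j$ with $j$ a priori uncontrolled; and ``iterate until the collection of possible residual values stabilizes'' is unsupported, since a finite subset of $\Z[r^s]\cap(0,\infty)$ closed under all such subtractions need not exist unless the extraction order is designed to keep the set of reachable states finite. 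Arranging exactly that finite, closed state space --- and choosing $c$ relative to it --- is the actual content of Xi and Xiong's proof, and it is the part your sketch leaves open.
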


The desired example is as follows.

\begin{example}\label{exa:lastcountexa}
    Consider~\cite[Example 1.1]{XX21}, where $\Phi$ has ratios $r_1=r_2=3^{-1}$, $r_3=r_4=3^{-2}$ and $\Psi$ has ratios $\lambda_1=\cdots=\lambda_{20}=3^{-3}$, $\lambda_{21}=\cdots=\lambda_{28}=3^{-6}$. As stated there, $K$ and $F$ share an identical Hausdorff dimension $s$ but fail to be Lipschitz equivalent. However, by employing the above lemma, it is not hard to construct a Lipschitz surjection from $K$ to $F$. This process can be viewed as a hierarchical mass distribution operation.

    For convenience, write $x:=3^{-s}$ and let $\nu$ be the normalized $\h^s$-measure on $F$. The dimension formula implies that $2x^2+2x=1$ and $8x^6+20x^3=1$. So in $\{F_j: j\in J\}$ there are $8$ cells of $\nu$-mass $x^6$ and $20$ cells of $\nu$-mass $x^3$. Since 
    \[
        (2x^2+2x)^3 = 8x^6+8x^3+24x^5+24x^4,
    \]
    in $\{K_{\i}: \i\in I^3\}$ there are $8$ cells of $\mu$-mass $x^6$, $8$ cells of $\mu$-mass $x^3$, $24$ cells of $\mu$-mass $x^5$ and $24$ cells of $\mu$-mass $x^4$. 
    Noting that $2x^5+2x^4=x^3(2x^2+2x)=x^3$, we can perform the following decomposition based on the $\mu$-mass of cells in $\{K_{\i}:\i\in I^3\}$: 
    \[
        \underbrace{\{x^6\},\ldots\{x^6\}}_{8 \text{ terms}}, \underbrace{\{x^3\},\ldots,\{x^3\}}_{8 \text{ terms}}, \underbrace{\{x^5,x^5,x^4,x^4\},\ldots,\{x^5,x^5,x^4,x^4\}}_{12 \text{ terms}}.
    \] 
    Thus there exist $8$ subgroups with $\mu$-mass $x^6$ and $20$ subgroups with $\mu$-mass $x^3$. This induces a natural partition of $I^3$ that admits a ``mass-preserving'' bijection to $J$. More precisely, we can decompose $I^3=\bigcup_{j\in J} \tilde{I}_{j}$ such that $\sum_{\i\in \tilde{I}_{j}} r_{\i}^s = \lambda_j^s$ for each $j\in J$.
    
    In this example, the ratio root $r$ equals $3^{-1}$, so $\delta=3^{-1}$ and $J_1=J$. Write 
    \[
        \Omega := \{1,3^{-s},3^{-2s},\ldots,3^{-5s}\}.
    \]
    For such parameters $r$ and $\Omega$, there exists a large integer $c$ as specified in Lemma~\ref{lem:xixiong21}.
    We claim that for every $k\geq 1$, there exists a partition $I_{1+kc}=\bigcup_{\j\in J_{1+(k-1)c}} \tilde{I}_{\j}$ such that $\sum_{\i\in\tilde{I}_{\j}}r_{\i}^s = \lambda_{\j}^s$ for each $\j\in J_{1+(k-1)c}$, where $I_n$ and $J_n$ are as defined in~\eqref{eq:InJnDefLastSection}.
    
    The claim is proved by induction. Clearly, picking $c$ large enough initially, every cell in $\{K_{\i}:\i\in I^3\}$ can be expressed as a finite union of cells in $\{K_\omega: \omega\in I_{1+c}\}$. Since $J_1=J$, we can transform the previous partition of $I^3$ into another partition $I_{1+c}=\bigcup_{\j\in J_1} \tilde{I}_{\j}$ with $\sum_{\i\in\tilde{I}_{\j}}r_{\i}^s = \lambda_{\j}^s$ for each $\j\in J_1$. This settles the case $k=1$.

    Suppose we have established the claim for some $k\geq 1$. By definition, $\lambda_{\eta}\leq 3^{-(1+kc)}<\lambda_{\eta^-}$ for each $\eta\in J_{1+kc}$. Since each map in $\Psi$ has ratio either $3^{-3}$ or $3^{-6}$, we have 
    \[
        \lambda_\eta \in \{3^{-(1+kc)},3^{-(1+kc)-1},\ldots,3^{-(1+kc)-5}\}, \quad \forall \eta\in J_{1+kc}
    \]
    and thus 
    \begin{equation}\label{eq:etainsomega}
        \lambda_\eta^s \in 3^{-(1+kc)s}\{1,3^{-s},\ldots,3^{-5s}\} = 3^{-(1+kc)s}\Omega = r^{(1+kc)s}\Omega, \quad \forall \eta\in J_{1+kc}.
    \end{equation}
    Let $\j\in J_{1+(k-1)c}$. By the induction hypothesis, $\j$ corresponds to some $\tilde{I}_{\j}\subset I_{1+kc}$ with
    \[
        \sum_{\i\in \tilde{I}_{\j}} r_{\i}^s = \lambda_{\j}^s = \sum_{\eta\in J_{1+kc}: \j\prec\eta} \lambda_\eta^s,
    \]
    where $\j\prec\eta$ means that $\j$ is a prefix of $\eta$. In other words, writing $E_{\j}:=\bigcup_{\i\in\tilde{I}_{\j}}K_{\i}$, $E_{\j}$ is simply an element in $\mathcal{B}_{1+kc}$ (as in Lemma~\ref{lem:lastlem}) with  $\mu(E_{\j})=\sum_{\eta\in J_{1+kc}: \j\prec\eta} \lambda_\eta^s$, where each $\lambda_\eta^s$ is a member of $r^{(1+kc)s}\Omega$ (recall~\eqref{eq:etainsomega}). It then follows from Lemma~\ref{lem:xixiong21} that there exists a partition $E_{\j}=\bigcup_{\eta\in J_{1+kc}:\j\prec\eta} E_\eta$, where every $E_\eta$ is a member of $\mathcal{B}_{1+(k+1)c}$ with $\mu(E_\eta)=\lambda_\eta^s$. By definition, there is some $\tilde{I}_\eta\subset I_{1+(k+1)c}$ such that $E_\eta=\bigcup_{\xi\in\tilde{I}_\eta} K_\xi$. We conclude that 
    \begin{equation}\label{eq:finalpartition}
        \bigcup_{\i\in\tilde{I}_{\j}} K_{\i} = E_{\j} = \bigcup_{\eta\in J_{1+kc}: \j\prec\eta} E_\eta = \bigcup_{\eta\in J_{1+kc}:\j\prec\eta} \bigcup_{\xi\in\tilde{I}_\eta}K_\xi,
    \end{equation} 
    where each equality represents a partition. 
    It follows that 
    \begin{equation}\label{eq:6.4}
        \bigcup_{\j\in J_{1+(k-1)c}}\bigcup_{\i\in \tilde{I}_{\j}} K_{\i}  
        = \bigcup_{\j\in J_{1+(k-1)c}}\bigcup_{\eta\in J_{1+kc}:\j\prec\eta} \bigcup_{\xi\in \tilde{I}_\eta} K_\xi 
        = \bigcup_{\eta\in J_{1+kc}}\bigcup_{\xi\in\tilde{I}_\eta} K_\xi,  
    \end{equation}
    because $J_{1+kc}=\bigcup_{\j\in J_{1+(k-1)c}}\{\eta\in J_{1+kc}: \j\prec\eta\}$. On the other hand, since $I_{1+kc}=\bigcup_{\j\in J_{1+(k-1)c}}\tilde{I}_{\j}$ (the induction hypothesis), 
    \begin{align*}
    	\bigcup_{\i\in I_{1+kc}} K_{\i} 	= \bigcup_{\j\in J_{1+(k-1)c}}\bigcup_{\i\in \tilde{I}_{\j}} K_{\i}. 
    \end{align*}
    Combining this with~\eqref{eq:6.4}, we have
    \begin{align*}
    	\bigcup_{\xi\in I_{1+(k+1)c}} K_\xi = K = \bigcup_{\i\in I_{1+kc}} K_{\i} = \bigcup_{\eta\in J_{1+kc}}\bigcup_{\xi\in\tilde{I}_\eta} K_\xi,  
    \end{align*} 
    which in turn induces a partition (since each $\tilde{I}_\eta\subset I_{1+(k+1)c}$)
    \[
        I_{1+(k+1)c} = \bigcup_{\eta\in J_{1+kc}} \tilde{I}_\eta
    \]
    such that $\sum_{\xi\in \tilde{I}_{\eta}}r_{\xi}^s=\mu(E_\eta)=\lambda_\eta^s$ for all $\eta\in J_{1+kc}$. This completes the induction.

    The claim naturally induces a Lipschitz surjection from $K$ to $F$ as follows. For $x\in K$ and $k\geq 1$, let $\i_k$ be the unique word in $I_{1+kc}$ such that $x\in K_{\i_k}$. Note that $\i_1\prec\i_2\prec\cdots$ and $\{x\}=\bigcap_k K_{\i_k}$. By our claim, there exists $\j_k\in J_{1+(k-1)c}$ such that $\i_k\in \tilde{I}_{\j_k}$. Moreover,
    \begin{equation}\label{eq:kik1inieta}
        K_{\i_{k+1}}\subset K_{\i_k} \subset \bigcup_{\omega\in\tilde{I}_{\j_k}} K_\omega = \bigcup_{\eta\in J_{1+kc}: \j_k\prec\eta}  \bigcup_{\xi\in\tilde{I}_\eta} K_\xi, 
    \end{equation}
    where the last equality follows from~\eqref{eq:finalpartition}. Since $\i_{k+1}\in I_{1+(k+1)c}$ and $\bigcup_{\eta\in J_{1+kc}:\j_k\prec\eta}\tilde{I}_\eta\subset I_{1+(k+1)c}$, \eqref{eq:kik1inieta} implies that
    \begin{equation*}
        \i_{k+1}\in\bigcup_{\eta\in J_{1+kc}:\j_k\prec\eta}\tilde{I}_\eta.
    \end{equation*}
    Combining this with our choice of $\j_{k+1}$ (i.e., the word in $J_{1+kc}$ such that $\i_{k+1}\in \widetilde{I}_{{\j}_{k+1}}$), we see that $\j_{k+1}\in \{\eta\in J_{1+kc}:\j_k\prec \eta\}$. 
    In particular, $\j_k\prec\j_{k+1}$. So $\j_1\prec\j_2\prec\cdots$ as well and we set $g(x)$ to be the single point in $\bigcap_{k} F_{\j_k}$. 
    
    To see the surjectivity of $g$, let $y\in F$ be arbitrary. For each $k\geq 1$, let $\xi_k$ be the unique word in $\widetilde{J}_{1+(k-1)c}$ such that $y\in F_{\xi_k}$. From the previous induction argument, it follows that $B_k:=\bigcup_{\i\in \widetilde{I}_{\xi_k}}K_{\i}$ is non-empty, compact and nested decreasing in $k$ (see~\eqref{eq:finalpartition}). Moreover, $g$ maps the non-empty intersection $\bigcap_{k=1}^\infty B_k$ to the singleton $\{y\}$. So $g(K)=F$.

    It remains to prove that $g$ is Lipschitz.
    Let $x,x'\in K$ be distinct and let $k_*$ be the maximal integer such that $x,x'\in K_{\omega_*}$ for a common $\omega_*\in I_{1+k_*c}$. So $|x-x'|\approx \diam(K_{\omega_*})=r_{\omega_*} \approx 3^{-(1+k_*c)}$. Since $g$ sends $x,x'$ into a specific cell in $\{F_{\j}:\j\in J_{1+(k_*-1)c}\}$, we have 
    \[
        |g(x)-g(x')| \leq \max\{\diam(F_{\j}): \j\in J_{1+(k_*-1)c}\} \approx 3^{-1-(k_*-1)c} \approx |x-x'|.
    \]
    So $g$ is Lipschitz.
\end{example}

\bigskip
\noindent{\bf Acknowledgements.}
The research of the authors is partially supported by NSFC grants 12371089, 12501113, 12531004, and the Fundamental Research Funds for the Central Universities of China grant 2024FZZX02-01-01.

\small
\bibliographystyle{amsplain}

\begin{thebibliography}{100}

    \bibitem{AHW24}
    A. Algom, M. Hochman, and M. Wu, New results on embeddings of self-similar sets via renormalization, arXiv:2410.19648.

    \bibitem{BK24}
    R. Balka and T. Keleti, Lipschitz images and dimensions, Adv. Math. {\bf 446} (2024), Paper No.109669.

    \bibitem{CP88}
    D. Cooper and T. Pignataro, On the shape of Cantor sets, J. Differential Geom. {\bf 28} (1988), 203--221.
    
    \bibitem{DS97}
    G. David and S. Semmes, \textit{Fractured fractals and broken dreams:	self-similar geometry through metric and measure}, Oxford Univ.
    Press, New York, 1997.
    

    \bibitem{DWXX11}
    J. Deng, Z. Wen, Y. Xiong, and L.-F. Xi, Bilipschitz embedding of self-similar sets, J. Anal. Math. {\bf 114} (2011), 63--97.

    \bibitem{EKM10}
    M. Elekes, T. Keleti, and A. M\'ath\'e, Self-similar and self-affine sets: measure of the intersection of two copies, Ergod. Theory Dyn. Syst. {\bf 30} (2010), 399--440.

    \bibitem{Fal86}
    K. J. Falconer, \textit{The Geometry of Fractal Sets}, Cambridge University Press, Cambridge, 1986.

    \bibitem{FM92}
    K. J. Falconer and D. T. Marsh, On the Lipschitz equivalence of Cantor sets, Mathematika {\bf 39} (1992), 223--233. 

    % \bibitem{Fed69}
    % H. Federer, \textit{Geometric Measure Theory}, Springer-Verlag New York, Inc., New York, 1969.

    \bibitem{FHR14}
    D.-J. Feng, W. Huang, and H. Rao, Affine embeddings and intersections of Cantor sets, J. Math. Pures Appl. {\bf 102} (2014), 1062--1079.

    \bibitem{FX18}
    D.-J. Feng and Y. Xiong, Affine embeddings of Cantor sets and dimension of $\alpha\beta$-sets, Israel J. Math. {\bf 226} (2018), 805--826.

    \bibitem{Kir34}
    M. Kirszbraun, {\" U}ber die zusammenziehende und Lipschitzsche Transformationen., Fundam. Math. {\bf 22} (1934), 77--108 (German).
    
    \bibitem{LuoLau13} Luo J. J. and Lau K.-S.,  Lipschitz equivalence of self-similar sets via hyperbolic boundaries, Adv. Math., \textbf{235} (2013), 555--579.
    

    \bibitem{Mat85}
    P. Mattila, \textit{Geometry of Sets and Measures in Euclidean Spaces}, Cambridge University Press, Cambridge, 1995.
    % \bibitem{FX18}
    % D.-J. Feng, Y. Xiong, Affine embeddings of Cantor sets and dimension of $\alpha\beta$-sets, Isr. J. Math. {\bf 226} (2018), 805–826.

    % \bibitem{Ols08}
    % L. Olsen, Density theorems for Hausdorff and packing measures of self-similar sets, Aequ. Math. {\bf 75} (2008), 208--225. 

    \bibitem{RRW12}
    H. Rao, H.-J. Ruan, and Y. Wang, Lipschitz equivalence of Cantor sets and algebraic properties of contraction ratios, Trans. Amer. Math. Soc. {\bf 364} (2012), 1109--1126.

    \bibitem{RRX06}
    H. Rao, H.-J. Ruan, and L.-F. Xi, Lipschitz equivalence of self-similar sets, C. R. Acad. Sci. Paris. Ser. I {\bf 342} (2006), 191--196. 

    \bibitem{RX25}
    H. Rao and J.-C. Xiao, Relative interior of the intersection of totally disconnected self-similar sets, in preparation, 2025+.

    \bibitem{RZ15}
    H. Rao and Y. Zhang, Higher dimensional Frobenius problem and Lipschitz equivalence of Cantor sets, J. Math. Pures Appl. {\bf 104} (2015), 868--881.


    \bibitem{Sze75}
    E. Szemer\'edi, On sets of integers containing no $k$ elements in arithmetic progression, Acta Arith. {\bf 27} (1975), 199--245.

    \bibitem{Tao10}
    T. Tao, \textit{An Epsilon of Room II}, American Mathematical Society, Providence, RI, 2010.

    \bibitem{Xi10}
    L.-F. Xi, Lipschitz equivalence of dust-like self-similar sets, Math. Z. {\bf 266} (2010), 683--691.


    \bibitem{XR08} L.-F. Xi and H.-J. Ruan, Lipschitz equivalence of	self-similar sets satisfying the strong separation condition (in
    Chinese), Acta Math. Sinica (Chin. Ser.) \textbf{51} (2008), 493--500.

    \bibitem{XX21}
    L.-F. Xi and Y. Xiong, Algebraic criteria for lipschitz equivalence of dust-like self-similar sets, J. London Math. Soc. {\bf 103} (2021), 760--780.

    
\end{thebibliography}

\end{document}